\def\sD{{\mathfrak D}}   \def\sE{{\mathfrak E}}   
   \def\sH{{\mathfrak H}}   
   \def\sK{{\mathfrak K}}   \def\sL{{\mathfrak L}}
\def\sM{{\mathfrak M}}      
      \def\sR{{\mathfrak R}}
      \def\sX{{\mathfrak X}}
\def\sY{{\mathfrak Y}}
\def\bB{{\mathbf{B}}}
\def\wt#1{{{\widetilde #1} }}
\def\bm\chi{\mbox{\boldmath$\chi$}}
\def\half{{\frac{1}{2}}}
\def\ker{{\rm ker\,}}
\def\ran{{\rm ran\,}}
\def\cran{{\rm \overline{ran}\,}}
\def\dom{{\rm dom\,}}
\def\mul{{\rm mul\,}}
\def\cdom{{\rm \overline{dom}\,}}
\def\clos{{\rm clos\,}}
\let\xker=\ker \def\ker{{\xker\,}}
\DeclareMathOperator{\hoplus}{\, \widehat \oplus \,}
\newtheorem{theorem}{Theorem}[section]
\newtheorem{corollary}[theorem]{Corollary}
\newtheorem{lemma}[theorem]{Lemma}
\theoremstyle{definition}
\newtheorem{remark}[theorem]{Remark}
\newtheorem{definition}[theorem]{Definition}
\numberwithin{equation}{section}
\begin{document}

\title[Lebesgue type decompositions and Radon-Nikodym derivatives]
{Lebesgue type decompositions and Radon-Nikodym derivatives for
pairs of bounded linear operators}
\author{S.~Hassi}
\author{H.S.V.~de~Snoo}

\address{Department of Mathematics and Statistics \\
University of Vaasa \\
P.O. Box 700, 65101 Vaasa \\
Finland}
\email{sha@uwasa.fi}

\address{Bernoulli Institute for Mathematics, Computer Science and Artificial Intelligence \\
 University of Groningen \\
P.O. Box 407, 9700 AK Groningen \\
Nederland}
\email{hsvdesnoo@gmail.com}



\keywords{Lebesgue type decompositions, operator range, pair of bounded operators, 
almost dominated part, singular part, Radon-Nikodym derivative}

\subjclass[2010]{47A05, 47A06, 47A65} 

\begin{abstract}
For a pair of bounded linear Hilbert space operators $A$ and $B$
one considers the
Lebesgue type decompositions of $B$ with respect to $A$
into an almost dominated part and a singular part,
analogous to the Lebesgue decomposition
for a pair of measures (in which case one speaks of an absolutely
continuous and a singular part).
A complete parametrization of all Lebesgue type decompositions will be given,
and the uniqueness of such decompositions will
be characterized.
In addition, it will be shown that the almost dominated part of $B$
in a Lebesgue type decomposition
has an abstract Radon-Nikodym derivative with respect to the operator $A$.
 \end{abstract}

\maketitle

\section{Introduction}

Let $\sE$, $\sH$, and $\sK$ be Hilbert spaces and
$A \in \bB(\sE,\sH)$ and $B \in \bB(\sE,\sK)$ be bounded
linear operators.
In the present paper it will be shown that there are so-called \textit{Lebesgue type
decompositions} of  the
operator $B \in \bB(\sE,\sK)$ relative to the operator $A \in \bB(\sE,\sH)$
of the form
 \begin{equation}\label{Bpair1}
 B=B_1+B_2, \quad B_1, B_2 \in \bB(\sE,\sK),
\end{equation}
where $\ran B_1 \perp \ran B_{2}$, $B_{1}$ is almost dominated by $A$,
and $B_2$ is singular with respect to $A$; the terminology will be explained below.
The collection of all Lebesgue type decompositions will be parametrized
and a criterion for the uniqueness of such decompositions will be established.
Furthermore, it will be shown that if $B$ has the above Lebesgue type
decomposition \eqref{Bpair1} with respect to $A$,
then there exists a uniquely determined closed linear operator $C$
from $\sH$ to $\sK$ satisfying a certain minimality condition, that, in general,
is unbounded, such that
 \begin{equation}\label{Bpair2}
 B_1=CA;
\end{equation}
for details, see Theorem \ref{thmRNder} below.
 This operator $C$ will be called the \textit{Radon-Nikodym derivative}
 of $B_1$ with respect to $A$.
The above results are the abstract analogs of the usual Lebesgue decomposition
of a   finite measure into an absolutely continuous part and a singular part,
and of the corresponding Radon-Nikodym derivative for the absolutely continuous part.
There are, indeed, situations in measure theory where actually
there is more than one Lebesgue type decomposition.

The above results can be interpreted as special cases
of corresponding results for linear relations.
For any linear relation  $T$ from a Hilbert space  $\sH$ to a Hilbert space $\sK$
there exist the so-called Lebesgue type decompositions of the form
\[
 T=T_1 + T_2
\]
such that $\ran T_{1} \perp \ran T_{2}$,  $T_1$ is a regular operator
(an operator whose closure in $\sH \times \sK$ is an operator, i.e., a closable operator),
and $T_2$ is a singular relation (a relation whose closure
is the product of closed linear subspaces in $\sH$ and $\sK$, respectively).
 A general treatment of Lebesgue type decompositions
 of a linear relation $T$ was carried out in the recent paper \cite{HSS2018}.
For instance, in that paper an explicit parametrization of all
Lebesgue type decompositions of $T$ was established
and the case where the Lebesgue type decomposition
of $T$ is unique has been characterized therein.
In the setting of a pair of positive operators this type
of uniqueness result goes back to Ando \cite{An}.

In the present paper Lebesgue type decompositions for
linear relations which are simultaneously operator ranges are studied (see \cite{FW});
in what follows such   linear relations are called shortly   \textit{operator range relations}.
Operator range relations coincide with the linear relations of the form
\begin{equation}\label{Tpair0}
 L(A,B)=\{\,\{Af,Bf\}:\, f\in\sE\,\},
\end{equation}
where $A \in \bB(\sE,\sH)$ and $B \in \bB(\sE,\sK)$.
The Lebesgue type decompositions of the relation $L(A,B)$
correspond to the Lebesgue type decompositions
of the operator $B$ with respect to the operator $A$ in \eqref{Bpair1}.
In particular, $B$ is almost dominated by $A$
precisely if the corresponding relation $L(A,B)$ is regular, i.e.,
$L(A,B)$ is a closable operator;
moreover, $B$ is singular with respect to $A$ precisely if $L(A,B)$ is singular.
If $L(A,B)$ is regular,
then its closure is the Radon-Nikodym derivative of $B$
with respect to $A$ mentioned in \eqref{Bpair2};
the precise meaning of this statement will be explained later.
The approach in the present paper makes it possible
to further develop the results established
in \cite{HSS2018}, including uniqueness results, to the
setting of operator range relations;
see also \cite{HSeSnSz,HSeSnLeb2009,HSnSz},

 The topic of the present paper was inspired by the work of Ando
about Lebesgue type decompositions for pairs of bounded nonnegative
operators \cite{An},  and the work of Simon
about Lebesgue decompositions for nonnegative forms \cite{S3},
see also \cite{HSeSnLeb2009} and \cite{HSeSnSz},
and Kosaki's work on the Radon-Nikodym derivative in the setting of $C^*$-algebras  \cite{Kos}.
The context of  pairs of bounded linear operators which are not necessarily nonnegative,
 is connected with the work of Mac~Nerney, Kaufman,
and others; see \cite{Kau78, Kau79, Kau83, Kau84, Koli14}.
Moreover, there are strong  connections
with the work of Izumino \cite{I89a, I89b, I93}
and of Izumino and Hirasawa \cite{IH}.  \
They treated the case where $L(A,B)$ in \eqref{Tpair0}  is a densely
defined operator.
 The parametrization of the Lebesgue type decompositions
 and the notion of Radon-Nikodym seem to
be new even in the case where $L(A,B)$ is densely defined.
 Moreover, at this point, it should be mentioned that, although
 the paper is inspired by \cite{An}, \cite{S3}, and \cite{Kos},
the decompositions \eqref{Bpair1} here are concerned
with pairs $A$ and $B$, where $B_1$ is almost dominated by $A$
and $B_2$ is singular with respect to $A$. However,
the decompositions of Ando and Simon, and Kosaki's
Radon-Nikodym derivatives belong to a slightly different setting;
this setting will be considered in further work.

Here is a brief description of the contents of the paper.
The concept of operator ranges, including their natural topologies,
will be reviewed in Section \ref{sec3}.
Operator range relations and a special normalized class
of them are treated in Section \ref{sec4}.
As an application, a construction of the operator range representation
of a closed relation is derived in Section \ref{sec5}.
This construction resembles a measure theoretic treatment
of Radon-Nikodym derivatives for a pair of positive measures,
and, as a bonus, leads to a natural introduction
of Radon-Nikodym derivatives for pairs of bounded linear operators
in Section \ref{sec8}.
As a preparation for Lebesgue type decompositions for pairs of bounded operators,
some characterizations of regular and singular operator range relations
along the lines of \cite{HSS2018} are given in Section \ref{sec6}.
The corresponding classification for pairs of
bounded linear operators can be found in Section \ref{sec7}.
This involves the notions of domination and almost domination
of a bounded operator with respect to another bounded operator,
which correspond to the concept of absolute continuity
of a positive measure with respect to another positive measure.
Similarly the notion of singularity of a pair of bounded operators
is defined as an operator analog
for the concept of singularity of a pair of positive measures.
In Section \ref{sec8} the abstract Radon-Nikodym derivative is introduced and investigated.
The definition of Radon-Nikodym derivative given here involves an optimality property,
see Lemma~\ref{thmRNder0} and Theorem~\ref{thmRNder} and, in fact, this notion is uniquely determined
in the general setting of operator range relations.
 Furthermore, it is shown that the Radon-Nikodym derivatives for operator range relations
admit similar properties known to hold for pairs of positive measures; see Theorem \ref{rrnn1}.
Finally, all Lebesgue type decompositions for pairs of bounded linear operators are
described in Section \ref{sec9} with a uniqueness result analogous to that of Ando \cite{An}.

Further work will be concerned with the situation that the operators  $A$ and $B$ are
nonnegative; such cases have been considered by Ando, Simon, Kosaki, and by later authors.
In particular, there will be an explicit connection to the recent papers
by Z.~Sebesty\'{e}n, Zs.~Tarcsay, and T.~Titkos; see for instance \cite{STT1, STT2, Tar16}.
Moreover, further work will also be connected with the situation where at least one of the operators $A$  and $B$
is not bounded.

\section{Linear relations admitting an operator range representation}\label{sec3}

In this section the special class of linear relations which, in addition,
are operator ranges, will be introduced.
In what follows, such linear relations are briefly called \textit{operator range relations.}
This class extends the class of closed linear relations and the operator range relations
have a number of useful properties.
The introduction will be facilitated by a brief treatment of operator ranges
in Hilbert spaces. The notions of operator ranges and operator range relations in the present sense
go back to \cite{FW},  \cite{Kau79}, \cite{La1980}, \cite{Labro08}, \cite{Mac59}, \cite{Mac70}.
A brief survey is given in this section.
For the convenience of the reader, some proofs are included.

\begin{definition}\label{opran2}
Let $\sX$ be a Hilbert space with inner product $(\cdot, \cdot)$.
A subspace $\sM$ of   $\sX$, together with an inner-product
$(\cdot,\cdot)_+$ on $\sM$, is said to be an operator range in $\sX$, if
 \begin{enumerate}
\def\labelenumi {\rm (\alph{enumi})}
\item $\sM$ is a Hilbert space when equipped with
$(\cdot,\cdot)_+$;

\item $\|u\|_+ \ge c \|u\|_\sH$, $u \in \sM$, for some $c > 0$.
\end{enumerate}
In particular, a closed linear subspace is an operator range.
\end{definition}

The terminology operator range is motivated the following lemma.
If $\sM \subset \sX$ is the range of a bounded operator,
then there is a natural inner-product on $\sM$,
that makes $\sM$ an operator range.

\begin{lemma}\label{opran00}
Let $\sX$ and $\sY$ be Hilbert spaces and let $Z \in \mathbf{B}(\sY, \sX)$.
Then the linear space $\sM=\ran Z$, equipped with the inner-product
\begin{equation}\label{opran+}
 (Zx, Zy)_+=(x,y), \quad x,y \in \sY \ominus \ker Z,
\end{equation}
is an operator range.
\end{lemma}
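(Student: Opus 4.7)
The plan is to verify the two properties (a) and (b) in Definition \ref{opran2} directly from the formula defining $(\cdot,\cdot)_+$. First I would observe that every $u \in \sM = \ran Z$ admits a \emph{unique} representation $u = Zx$ with $x \in \sY \ominus \ker Z$: existence follows by decomposing any preimage $x' \in \sY$ of $u$ along $\ker Z \oplus (\sY \ominus \ker Z)$ and noting that the $\ker Z$-component is annihilated by $Z$, while uniqueness follows from the injectivity of $Z\uphar (\sY \ominus \ker Z)$. Hence the formula $(Zx, Zy)_+ := (x,y)$ with $x,y \in \sY \ominus \ker Z$ unambiguously defines a sesquilinear form on $\sM$, and it inherits positive-definiteness, symmetry, and linearity from the inner product of $\sY$. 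In particular, $\sM$ equipped with this form is a pre-Hilbert space, and by construction the map $Z \uphar (\sY \ominus \ker Z) : \sY \ominus \ker Z \to (\sM, \|\cdot\|_+)$ is an isometric bijection.

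Next I would check item (b). For $u = Zx \in \sM$ with $x \in \sY \ominus \ker Z$, the boundedness of $Z$ yields
\[
 \|u\|_\sX = \|Zx\|_\sX \le \|Z\|\, \|x\|_\sY = \|Z\|\, \|u\|_+,
\]
so the desired estimate holds with $c = 1/\|Z\|$ (the case $Z=0$ is trivial since then $\sM = \{0\}$).

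Finally, I would verify completeness in item (a). Given a $\|\cdot\|_+$-Cauchy sequence $(u_n)$ in $\sM$, write $u_n = Zx_n$ with $x_n \in \sY \ominus \ker Z$. The isometry identity $\|u_n - u_m\|_+ = \|x_n - x_m\|_\sY$ shows that $(x_n)$ is Cauchy in $\sY$ and therefore converges to some $x \in \sY$; since $\sY \ominus \ker Z$ is closed, $x$ lies in this subspace. Setting $u := Zx \in \sM$, the same isometry gives $\|u_n - u\|_+ = \|x_n - x\|_\sY \to 0$, establishing completeness. The argument is almost entirely bookkeeping; the only point requiring attention is the initial restriction to $\sY \ominus \ker Z$ that makes the assignment $u \mapsto x$ well-defined, after which the Hilbert space structure of $\sY \ominus \ker Z$ is simply transported across this isometry to $\sM$, and the boundedness of $Z$ supplies the continuous embedding into $\sX$ for free.
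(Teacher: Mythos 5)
Your proposal is correct and follows essentially the same route as the paper: transport the Hilbert space structure of $\sY \ominus \ker Z$ to $\ran Z$ via the isometric bijection $Z\uphar(\sY\ominus\ker Z)$, and use $\|Zx\|_\sX \le \|Z\|\,\|x\|_\sY$ for the continuous embedding. Your version is in fact slightly more careful on two points the paper leaves implicit, namely the well-definedness of $(\cdot,\cdot)_+$ and the fact that the limit in the completeness argument is attained in the $\|\cdot\|_+$-norm rather than merely in $\sX$.
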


\begin{proof}
Assume that $Z$ is not the zero operator.
 Note that  $\sM=\ran Z$ with $(\cdot,\cdot)_+$
in \eqref{opran+} is indeed an inner-product space.
To see that it is complete, let $(Zx_n)$ be a Cauchy sequence in $(\sM, (\cdot,\cdot)_+)$
with $x_n \in \sY \ominus \ker Z$.
Then $(x_n)$ is a Cauchy sequence in $\sY \ominus \ker Z$.
Thus $x_n \to x$ for some $x \in \sY \ominus \ker Z$ and $Zx_n \to Zx$,
since $Z \in \mathbf{B}(\sY, \sX)$.
This shows (a) in Definition \ref{opran2}.
By definition $\|Zx\|_+=\|x\|$, $x \in \sY \ominus \ker Z$,
and $\|Zx\| \leq \|Z\| \|x\|$, $x \in \sY$,  lead to the inequality
\[
  \|Zx\|_+ =\|x\| \ge \frac{1}{\|Z\|} \|Zx\|_\sX, \quad x \in \sY \ominus \ker Z,
\]
which gives (b) in Definition \ref{opran2}.
\end{proof}

There is also a converse to Lemma \ref{opran00}.

\begin{lemma}\label{opran0}
Let $(\sM, (\cdot,\cdot)_+)$ be an operator range in $\sX$.
Then there exists an operator $Z \in \mathbf{B}(\sX)$
such that $\sM=\ran Z$ and
\begin{equation}\label{opran+-}
 (Zx, Zy)_+=(x,y), \quad x,y \in \sX \ominus \ker Z.
\end{equation}
The operator $Z$ may be chosen to be nonnegative.
 \end{lemma}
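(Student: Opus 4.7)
The plan is to realize $Z$ as the nonnegative square root of a natural bounded operator built from the inclusion $(\sM,(\cdot,\cdot)_+)\hookrightarrow\sX$, and to read off all the required properties from the polar decomposition of that inclusion. Write $\sM_+$ for the Hilbert space $(\sM,(\cdot,\cdot)_+)$ guaranteed by (a). Condition (b) is exactly the statement that the inclusion $\iota\colon\sM_+\to\sX$, $\iota u=u$, is bounded with $\|\iota\|\le 1/c$. Since $\iota$ is injective with $\ran\iota=\sM$, the adjoint $\iota^*\in\mathbf{B}(\sX,\sM_+)$ is well defined, and I would set
\[
Z:=(\iota\iota^*)^{1/2}\in\mathbf{B}(\sX).
\]
This operator is nonnegative and self-adjoint, which settles the last clause of the lemma, and one has the sharp identity $\ker Z=\ker\iota^*=(\ran\iota)^\perp=\sM^\perp$.

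Next I would invoke the polar decomposition $\iota=|\iota^*|\,U=ZU$, where $U\colon\sM_+\to\sX$ is a partial isometry with initial space $(\ker\iota)^\perp=\sM_+$ (so $U$ is genuinely an isometry) and final space $\overline{\sM}$; in particular $UU^*=P_{\overline{\sM}}$. The factorisation $\iota=ZU$ immediately gives $\sM=\ran\iota\subseteq\ran Z$. For the reverse inclusion, because $\ker Z=\sM^\perp$ the operator $Z$ vanishes on $(\overline{\sM})^\perp$, and therefore $Z=Z\,UU^*=\iota U^*$, so $\ran Z\subseteq\ran\iota=\sM$. Hence $\ran Z=\sM$.

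Finally I would verify the isometry identity \eqref{opran+-}. For $x\in(\ker Z)^\perp=\overline{\sM}=\ran U$, write $x=Uu_x$ with $u_x=U^*x\in\sM_+$ uniquely determined; then $Zx=\iota U^*x=u_x$, read as an element of $\sM\subset\sX$. Since $U$ is an isometry of $\sM_+$ into $\sX$,
\[
(Zx,Zy)_+=(u_x,u_y)_+=(Uu_x,Uu_y)_\sX=(x,y)_\sX,
\]
which is precisely \eqref{opran+-}. The main friction will lie in keeping the bookkeeping of the polar decomposition straight: using $\ker\iota=\{0\}$ to upgrade the usual partial isometry to a genuine isometry $U$, and using the sharp identity $\ker Z=\sM^\perp$ (rather than merely a density statement) to conclude $\ran Z=\sM$ instead of only $\overline{\ran Z}=\overline{\sM}$.
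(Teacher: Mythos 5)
Your proof is correct and follows essentially the same route as the paper: both hinge on the polar decomposition of the inclusion $\iota\colon(\sM,(\cdot,\cdot)_+)\to\sX$ (equivalently, of its adjoint) together with the injectivity of $\iota$ to identify the relevant initial/final spaces. The only difference is organizational: the paper first takes $Z$ to be the partial-isometry factor of $\imath^{\times}$, for which \eqref{opran+-} is immediate, and then passes to $|Z^*|$ to obtain a nonnegative representative, whereas you take the nonnegative factor $(\iota\iota^*)^{1/2}$ in a single step and verify \eqref{opran+-} through the isometry $U$; both verifications are sound.
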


\begin{proof}
 Let $\imath : \sM \to \sX$ be the identification map, where each space has its own topology.
Then it follows from (b) that $c \,\|\imath \,x\| \leq \|x\|_+$, so that  $\imath$ is bounded.
Its adjoint $\imath^{\times}$ from $\sX$ to $\sM$
is a bounded mapping and one has the polar decomposition
(cf. \cite[Section~VI~2.7]{Kato})
\[
 \imath^{\times}=Z \,|\,\imath^{\times}|,
\]
where $Z: \sX \to \sM$
is the unique partial isometry with initial space $\cran |\,\imath^\times|$
and final space $\cran \imath^\times$. Since the last space is the orthogonal complement
in $\sM$ of $\ker \imath$, one sees that $\cran \imath^\times=\sM$. Consequently, $\ran Z=\sM$,
and \eqref{opran+-} holds as $Z$ is a partial isometry from $\sX$ to $\sM$.
 Finally, it remains to observe that
\[
c \|Z x\| \leq \|Zx\|_+ \leq \|x\|, \quad x \in \sX,
\]
thanks to (b) and the fact that $Z$ is a partial isometry. Thus $Z \in \mathbf{B}(\sX)$.

For the proof of the last statement, consider $Z \in \bB(\sX)$ and its polar decomposition $Z=|Z^*|C$, where
$C \in \bB(\sX)$ is the unique partial isometry with initial space  $\cran Z^*$ and final space $\cran |Z^*|$.
Observe that, by the Douglas Lemma,  $\ran |Z^*|=\ran Z$, so that $\ran |Z^*| =\sM$  and, clearly,
\[
 (|Z^*|Cx, |Z^*|Cy)_+=(x,y)=(Cx,Cy), \quad x,y \in (\ker Z)^\perp.
\]
This implies that
\[
 (|Z^*|u, |Z^*|v)_+=(u,v), \quad u,v \in (\ker |Z^*|)^\perp,
\]
 where $|Z^*| \in \mathbf{B}(\sX)$ is nonnegative.
 \end{proof}

 According to Lemma \ref{opran00} and  Lemma \ref{opran0},
an operator range $\sM$
is naturally parametrized by a bounded operator $Z \in \mathbf{B}(\sY, \sX)$.
The subspace $\ker Z \subset \sY$ is called the \textit{redundant part}
of this parametrization, as it does not contribute to $\ran Z$.
The restriction $Z_0$ of $Z$ to $\sY \ominus \ker Z$ is called the
\textit{reduced part} or \textit{reduction} of $Z$:
\begin{equation}\label{redux}
Z=(Z_0 \,;\, O_{\ker Z}).
\end{equation}
 The topology of $\sM$ is uniquely determined by the reduced part of the
representing operator in a sense to b explained below.

\begin{lemma}\label{opran+++}
Let $\sM$ be an operator range in $\sX$. Assume that
there exist Hilbert spaces $\sY$ and $\sY_1$ and operators
$Z \in \mathbf{B}(\sY, \sX)$ and $Z_1 \in \mathbf{B}(\sY_1, \sX)$,
which each satisfy the conditions in  Lemma {\rm \ref{opran00}}.
Then there is a bounded and boundedly invertible  operator
$W \in \mathbf{B}(\sY_1,\sY)$ such that
\begin{equation}\label{opran++}
Z_1=ZW, \quad \ker W=\ker Z_1, \quad \ran W=\cran Z^*.
\end{equation}
Consequently, the topologies induced on $\sM$ by $Z$
and by $Z_1$ are equivalent.
\end{lemma}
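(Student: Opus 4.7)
The plan is to construct $W$ pointwise by using that the reduction $Z_0=Z\uphar(\sY\ominus\ker Z)$ is a bijection onto $\sM$, verify the algebraic identities in \eqref{opran++} directly, and then invoke the closed graph theorem and the open mapping theorem to obtain the topological statements. The main obstacle will be establishing that $W$ is bounded (and that its reduction has a bounded inverse), since no continuity of $W$ is built into the construction.

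First I would fix $y_1\in\sY_1$ and observe that $Z_1y_1\in\sM=\ran Z$, so by injectivity of $Z_0$ there is a unique $y\in\sY\ominus\ker Z$ with $Zy=Z_1y_1$; setting $Wy_1:=y$ defines a linear map $W:\sY_1\to\sY$ with $ZW=Z_1$ and $\ran W\subseteq\sY\ominus\ker Z=\cran Z^*$. The kernel identity is then immediate, $Wy_1=0$ iff $Z_1y_1=Z(Wy_1)=0$, so $\ker W=\ker Z_1$. For the range identity, any $y\in\sY\ominus\ker Z$ satisfies $Zy\in\sM=\ran Z_1$, hence $Zy=Z_1y_1$ for some $y_1$, and then $Wy_1=y$; thus $\ran W=\sY\ominus\ker Z=\cran Z^*$.

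Next I would apply the closed graph theorem to $W$. Suppose $y_1^n\to y_1$ in $\sY_1$ and $Wy_1^n\to y$ in $\sY$. Since $\sY\ominus\ker Z$ is closed one has $y\in\sY\ominus\ker Z$, and by continuity of $Z$ and $Z_1$,
\[
Zy=\lim_{n\to\infty}Z(Wy_1^n)=\lim_{n\to\infty}Z_1y_1^n=Z_1y_1,
\]
so by the uniqueness in the definition of $W$ one gets $Wy_1=y$; hence $W$ is closed and therefore bounded. To obtain bounded invertibility, I would pass to the reduction $W_0:\sY_1\ominus\ker Z_1\to\sY\ominus\ker Z$, which by the already-established kernel and range identities is a bounded linear bijection between Hilbert spaces; the open mapping theorem then delivers a bounded inverse.

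Finally, equivalence of the two topologies on $\sM$ follows at once from \eqref{opran+} applied to both $Z$ and $Z_1$: for $u=Zy=Z_1y_1$ with $y\in\sY\ominus\ker Z$ and $y_1\in\sY_1\ominus\ker Z_1$, the norms associated with $Z$ and $Z_1$ are $\|u\|_{+,Z}=\|y\|=\|W_0y_1\|$ and $\|u\|_{+,Z_1}=\|y_1\|$, which are comparable via $\|W_0\|$ and $\|W_0^{-1}\|$.
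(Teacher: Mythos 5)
Your proof is correct. It differs from the paper's in a small but genuine way: the paper obtains the bounded operator $W$ in one stroke by citing the Douglas lemma (since $\ran Z_1\subset\ran Z$) and then appeals to the closed graph theorem for the statement that $W$ maps $(\ker Z_1)^\perp$ bijectively onto $(\ker Z)^\perp$, whereas you construct $W$ pointwise via the injectivity of the reduction $Z_0$ and prove boundedness yourself with the closed graph theorem --- in effect you have inlined a proof of the special case of the Douglas lemma that is needed here. Your route buys two things. First, it is self-contained. Second, it cleanly delivers the \emph{equality} $\ran W=\cran Z^*$: the Douglas lemma applied to the inclusion $\ran Z_1\subset\ran Z$ only yields $\ran W\subset\cran Z^*$, and the reverse inclusion genuinely requires using $\ran Z\subset\ran Z_1$ as well, which your argument (solving $Zy=Z_1y_1$ for $y_1$) does explicitly while the paper leaves it implicit. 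You also spell out the final norm comparison $\|u\|_{+,Z}=\|W_0 y_1\|$ versus $\|u\|_{+,Z_1}=\|y_1\|$ that justifies the ``consequently'' in the statement, which the paper does not write out. The only cosmetic caveat is that, as in the paper, the phrase ``boundedly invertible'' in the statement should be read as referring to the reduction $W_0$ (equivalently, to $W$ as a map from $(\ker Z_1)^\perp$ onto $(\ker Z)^\perp$), since $W$ itself has the nontrivial kernel $\ker Z_1$ in general; your passage to $W_0$ and the open mapping theorem is exactly the right way to make that precise.
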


\begin{proof}
If $\sM=\ran Z_1$ with $Z_1 \in \mathbf{B}(\sY_1,\sX)$
with a Hilbert space $\sY_1$, then
it is clear that the operators $Z_1$ and $Z$ have the same range.
Hence, by the Douglas lemma,
 there is an  operator
$W \in \mathbf{B}(\sY_1,\sY)$ such that  \eqref{opran++} holds.
Note that the operator $W$ in Lemma \ref{opran+++} is a  bounded bijective
mapping from $(\ker Z_1)^\perp$ onto $(\ker Z)^\perp$,
as follows from the closed graph theorem.
\end{proof}

\begin{lemma}
Let $\sM=\ran Z$ be an operator range as in  Lemma {\rm \ref{opran+}}
and let $Z_0$ be the reduced part of $Z$.
Then the following statements are equivalent:
\begin{enumerate}\def\labelenumi{\rm(\roman{enumi})}
\item $\sM$ is an operator range in $\sX$ that is closed;
\item $\ran Z$ or, equivalently, $\ran Z_0$ is closed;
\item $Z_0$ is bounded and boundedly invertible.
\end{enumerate}
\end{lemma}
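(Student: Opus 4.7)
The plan is to leverage the decomposition $Z = (Z_0 \,;\, O_{\ker Z})$ from \eqref{redux}, which implies $\ran Z = \ran Z_0 = \sM$. This makes the equivalence (i) $\Leftrightarrow$ (ii) essentially tautological: closedness of $\sM$ as a subset of $\sX$ is just closedness of $\ran Z$, which in turn coincides with closedness of $\ran Z_0$ since the zero part of $Z$ contributes nothing to the range.

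For (ii) $\Rightarrow$ (iii), I would argue as follows. If $\ran Z_0$ is closed in $\sX$, then $\ran Z_0$ is itself a Hilbert space with the inherited norm. The reduced operator $Z_0: \sY \ominus \ker Z \to \ran Z_0$ is bounded, being a restriction of $Z \in \mathbf{B}(\sY,\sX)$, and it is a bijection onto $\ran Z_0$: injectivity is built into the very definition of the reduction, and surjectivity onto its own range is automatic. The bounded inverse theorem then yields that $Z_0^{-1}\colon \ran Z_0 \to \sY \ominus \ker Z$ is bounded, which is the intended meaning of ``$Z_0$ is boundedly invertible'' here.

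For (iii) $\Rightarrow$ (ii), boundedness of $Z_0^{-1}$ provides a constant $c > 0$ with
\[
 \|Z_0 x\|_\sX \ge c \|x\|, \quad x \in \sY \ominus \ker Z.
\]
A standard Cauchy-sequence argument then shows $\ran Z_0$ is closed: if $Z_0 x_n \to y$ in $\sX$, the lower bound forces $(x_n)$ to be Cauchy in the complete space $\sY \ominus \ker Z$, hence convergent to some $x$, and continuity of $Z_0$ gives $y = Z_0 x \in \ran Z_0$.

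The main point of care, rather than a genuine obstacle, is to fix the correct interpretation of ``boundedly invertible'' in (iii): since $Z_0$ need not map onto $\sX$, the statement must be read as ``$Z_0$ admits a bounded inverse on $\ran Z_0$'', equivalently, $Z_0$ is bounded below. Once this is settled, all three equivalences reduce to the open mapping theorem together with the standard fact that an injective bounded operator has closed range precisely when it is bounded below.
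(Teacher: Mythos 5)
Your proof is correct. The paper states this lemma without any proof, so there is nothing to compare against; your argument --- the tautological identification $\sM=\ran Z=\ran Z_0$ for (i)$\Leftrightarrow$(ii), the bounded inverse theorem applied to the injective reduction $Z_0$ for (ii)$\Rightarrow$(iii), and the lower bound $\|Z_0x\|\ge c\|x\|$ forcing closed range for (iii)$\Rightarrow$(ii) --- is the standard one the authors evidently had in mind, and your explicit remark that ``boundedly invertible'' must mean bounded below (invertible onto $\ran Z_0$, not onto $\sX$) is a point the paper leaves implicit.
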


Operator ranges in a Hilbert space $\sX$ extend the notion of closed linear subspaces of $\sX$.
In particular, operator ranges form a lattice; cf. \cite{FW}.

\medskip

The previous notion of operator range will now be extended
to subspaces of product spaces; see for instance \cite{Labro08}.

\begin{definition}
 A linear relation $T$ from $\sH$ to $\sK$ is said to be an \textit{operator range
relation} if its graph is an operator range in $\sH \times \sK$, thus
 $T=\ran \Phi$  for some $\Phi \in \bB(\sE, \sH \times \sK)$,
where $\sE$ is some Hilbert space.
\end{definition}

\begin{lemma}\label{cloclo}
Let $T$ be a closed linear relation from the Hilbert space $\sH$ to the Hilbert space  $\sK$.
Then $T$ is an operator range relation.
\end{lemma}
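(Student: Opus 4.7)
The plan is to reduce the claim to the observation, already recorded at the end of Definition \ref{opran2}, that every closed linear subspace of a Hilbert space is an operator range in that Hilbert space. Since $T$ is by hypothesis a closed linear subspace of $\sH\times\sK$, I would first equip its graph with the inner product inherited from $\sH\times\sK$. Closedness of $T$ in $\sH\times\sK$ guarantees that $(T,(\cdot,\cdot)_{\sH\times\sK})$ is a Hilbert space, so condition (a) of Definition \ref{opran2} holds. Condition (b) is then trivial with $c=1$, because $\|u\|_+=\|u\|_{\sH\times\sK}$ for every $u\in T$. Consequently the graph of $T$ is an operator range in $\sH\times\sK$, which is precisely what it means for $T$ to be an operator range relation.

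To finish and match the form of the definition, one needs to exhibit a Hilbert space $\sE$ and an operator $\Phi\in\bB(\sE,\sH\times\sK)$ with $\ran\Phi=T$. I would present two equivalent ways. The quick route is to set $\sE=T$ with the induced Hilbert space structure and let $\Phi\colon\sE\to\sH\times\sK$ be the (isometric) inclusion map; then $\Phi$ is bounded and $\ran\Phi=T$. The route that fits better with the rest of the section is to invoke Lemma \ref{opran0} with $\sX=\sH\times\sK$ and $\sM=T$, which directly supplies an operator $\Phi\in\bB(\sH\times\sK)$, and indeed one that may be chosen nonnegative, such that $\ran\Phi=T$.

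There is no real obstacle here: the statement is essentially a translation of the definitions, and all the analytic content (existence of the representing operator) is already packaged in Lemma \ref{opran0}. The only point worth flagging in the write-up is that closedness of $T$ in $\sH\times\sK$ is exactly what is needed to ensure completeness under the inherited inner product, so that the hypotheses of Definition \ref{opran2} are met with the identity embedding playing the role of the embedding $\imath$ used in the proof of Lemma \ref{opran0}.
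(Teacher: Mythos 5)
Your proposal is correct and matches the paper's argument in all essentials: the paper simply takes $\sE=\sH\times\sK$ and $\Phi=P_T$, the orthogonal projection onto the closed subspace $T$, which is one of the trivially equivalent choices you describe (your inclusion map $\iota\colon T\to\sH\times\sK$ and the projection $P_T$ both realize $T$ as the range of a bounded operator). No gaps; the verification that a closed subspace with the inherited inner product satisfies Definition \ref{opran2} is exactly the content needed.
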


\begin{proof}
Consider the orthogonal decomposition
$\sE=\sH \times \sK=T \hoplus T^\perp$ and let
$\Phi=P_T$ be the orthogonal projection from
$\sE=\sH\times\sK$ to $T$, so that $T=\ran \Phi$.
\end{proof}

To characterize operator range relations
the following notions and notations are needed.
For a pair of operators $A\in\bB(\sE,\sH)$ and
$B\in\bB(\sE,\sK)$
 the linear relation $L(A,B)$ from  $\sH$ to $\sK$ is defined by
 \begin{equation*}
 L(A,B)=\{\,\{Af,Bf\}:\, f\in\sE\,\}.
\end{equation*}
 Let $P_1$ and $P_2$ be the orthogonal projections from
$\sH\times\sK$ onto the component subspaces
$\sH\times\{0\}$ and $\{0\}\times\sK$, respectively.
The mappings $\iota_{1} \{\varphi,0\}=\varphi$ and
$\iota_{2} \{0,\psi\}=\psi$
identify $\sH\times\{0\}$ with $\sH$ and
$\{0\}\times\sK$ with $\sK$, respectively.
In the following theorem the operator range relations
are characterized; see \cite[Theorem 1.10.1]{BHS}.

\begin{theorem}\label{Tclosed}
 Let $\sH$ and  $\sK$ be Hilbert spaces
 and let $T$ be a linear relation from  $\sH$ to $\sK$.
Then there are the following statements:
\begin{enumerate}\def\labelenumi{\rm(\alph{enumi})}
\item If  $T=\ran \Phi$, where $\Phi\in\bB(\sE, \sH\times\sK)$,
then $T=L(A,B)$ with
\[
  A=\iota_{1} P_1 \Phi \in \bB(\sE,\sH) \quad \mbox{and}
  \quad B=\iota_{2} P_2 \Phi \in \bB(\sE,\sK).
\]
\item If $T=L(A,B)$, where $A\in\bB(\sE,\sH)$ and
$B\in\bB(\sE,\sK)$, then $T=\ran \Phi$ with
\[
 \Phi \in \bB(\sE,\sH\times \sK) \quad \mbox{and}
 \quad \Phi f=\{Af, Bf\}, \quad f \in \sE.
\]
\end{enumerate}
\end{theorem}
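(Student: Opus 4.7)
The theorem is essentially an unpacking statement: the two descriptions of an operator range relation, as the range of a single bounded operator into the product space and as a pair of bounded operators from a common source, are trivially interconvertible via the orthogonal decomposition $\sH\times\sK = (\sH\times\{0\}) \oplus (\{0\}\times\sK)$. The plan is therefore to verify each direction by tracking where an element $f \in \sE$ goes under the stated constructions.

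For part (a), I would start with $\Phi \in \bB(\sE,\sH\times\sK)$ and note that for every $f\in\sE$ the element $\Phi f \in \sH\times\sK$ splits uniquely as $\Phi f = P_1\Phi f + P_2 \Phi f$, with the two summands living in $\sH\times\{0\}$ and $\{0\}\times\sK$ respectively. Applying the identifications $\iota_1, \iota_2$ converts these to the actual component vectors, so that $\Phi f = \{\iota_1 P_1 \Phi f,\, \iota_2 P_2 \Phi f\} = \{Af, Bf\}$ with $A$ and $B$ as defined. Boundedness of $A$ and $B$ is immediate since each of $\iota_j$, $P_j$, $\Phi$ is bounded. Taking ranges yields $\ran \Phi = \{\{Af,Bf\}:f\in\sE\} = L(A,B)$.

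For part (b), going the other direction, I would simply define $\Phi f := \{Af, Bf\}$ and verify linearity (clear from linearity of $A$ and $B$) and boundedness via
\[
 \|\Phi f\|^2_{\sH\times\sK} = \|Af\|^2_\sH + \|Bf\|^2_\sK \leq (\|A\|^2+\|B\|^2)\,\|f\|^2_\sE,
\]
so $\Phi \in \bB(\sE,\sH\times\sK)$. By construction $\ran \Phi = L(A,B)$.

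There is no real obstacle here; the only thing to be careful about is bookkeeping with the identification maps $\iota_1,\iota_2$ so that $A$ takes values in $\sH$ (not in $\sH\times\{0\}$) and similarly for $B$, and to confirm that the two constructions in (a) and (b) are mutually inverse, which shows that the correspondence between $\Phi$ and the pair $(A,B)$ is bijective and hence that the two characterizations genuinely coincide.
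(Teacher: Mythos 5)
Your proof is correct; the paper itself gives no proof of this theorem, simply citing \cite[Theorem 1.10.1]{BHS}, and your argument is exactly the standard unpacking one would expect there: decompose $\Phi f$ along $\sH\times\{0\}$ and $\{0\}\times\sK$ for (a), and assemble the column operator with the norm estimate $\|\Phi f\|^2=\|Af\|^2+\|Bf\|^2$ for (b). The only remark worth adding is that the bijectivity of the correspondence $\Phi\leftrightarrow(A,B)$, while true, is not asserted by the theorem and is not needed; the two parts are just the two implications.
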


\begin{corollary}
If $T$ is an operator range relation from $\sH$ to $\sK$, then $\dom T$ and $\ker T$ are
operator ranges in $\sH$, while $\ran T$ and $\mul T$ are operator ranges in $\sK$.
\end{corollary}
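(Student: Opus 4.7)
The plan is to use Theorem \ref{Tclosed} to pass from the operator range relation $T$ to a concrete representation $T=L(A,B)$ with $A\in\bB(\sE,\sH)$ and $B\in\bB(\sE,\sK)$, and then to exhibit each of the four subspaces $\dom T$, $\ran T$, $\ker T$, and $\mul T$ as the range of a bounded Hilbert space operator. Lemma \ref{opran00} then immediately equips each of them with a natural inner product making it an operator range in the appropriate ambient space.

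The identifications I would use are
\[
 \dom T=\ran A,\qquad \ran T=\ran B,
\]
which are operator ranges by Lemma \ref{opran00} applied to $A$ and $B$ respectively. For the kernel and multivalued part, observe that
\[
 \ker T=\{Af:f\in\sE,\;Bf=0\}=A(\ker B),\qquad
 \mul T=\{Bf:f\in\sE,\;Af=0\}=B(\ker A).
\]
Let $P_B$ be the orthogonal projection in $\sE$ onto $\ker B$ and $P_A$ the orthogonal projection onto $\ker A$. Then $AP_B\in\bB(\sE,\sH)$ with $\ran(AP_B)=A(\ker B)=\ker T$, and similarly $BP_A\in\bB(\sE,\sK)$ with $\ran(BP_A)=B(\ker A)=\mul T$. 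Another application of Lemma \ref{opran00} finishes the argument.

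There is essentially no obstacle here; the corollary is a bookkeeping consequence of the structure theorem. The only point that requires a line of care is the verification of $\ker T=A(\ker B)$ (and the analogous formula for $\mul T$), which follows directly from the definition $L(A,B)=\{\{Af,Bf\}:f\in\sE\}$: an element $Af\in\dom T$ lies in $\ker T$ precisely when there exists $f'\in\sE$ with $Af'=Af$ and $Bf'=0$, i.e.\ when $Af\in A(\ker B)$. Once this is observed, the result is immediate from Theorem \ref{Tclosed} and Lemma \ref{opran00}.
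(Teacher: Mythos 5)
Your proof is correct and follows exactly the route the paper intends: the corollary is stated without proof as an immediate consequence of Theorem \ref{Tclosed}, the identities $\dom L(A,B)=\ran A$, $\ran L(A,B)=\ran B$, $\ker L(A,B)=A(\ker B)$, $\mul L(A,B)=B(\ker A)$ (cf.\ \eqref{Tabb}), and Lemma \ref{opran00}. Writing $\ker T=\ran(AP_{\ker B})$ and $\mul T=\ran(BP_{\ker A})$ is precisely the right way to make the last two cases explicit.
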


The concept of an operator range relation extends in a certain way
the notion of a closed linear relation.
For instance, sums, and intersections, as well as Cartesian
products of operator range relations are again operator range relations.
In particular, operator range relations form a lattice; cf. \cite{FW}.
Notice also that if the relation $T$ is itself a range of some closed linear relation
$H$ from a Hilbert space $\sE$ to the Hilbert space
$\sH\times \sK$, then $T$ is still an operator range relation.

 \section{Operator range relations and normalized pairs}\label{sec4}

Let $\sE$, $\sH$, and $\sK$ be Hilbert spaces
and consider the pair of operators $A\in\bB(\sE,\sH)$ and
$B\in\bB(\sE,\sK)$. It will be convenient to recall from \cite{HLabS20b, MS08}, see also \cite{FW},
the following auxiliary column and row operators.
The \textit{column operator} $c(A,B)$ from  $\sE$ to $\sH \times \sK$
is defined by
\begin{equation}\label{column}
c(A,B) = \begin{pmatrix} A  \\ B  \end{pmatrix}, \quad \mbox{i.e.,} \quad
 c(A,B) \varphi= \begin{pmatrix} A \varphi \\ B \varphi \end{pmatrix},
\quad \varphi \in \sE.
\end{equation}
 Then $c(A,B)$ belongs to $\bB(\sE,\sH\times\sK)$.
Likewise,  the \textit{row operator}
from $\sH \times \sK$ to $\sE$ is defined by
\begin{equation}\label{row}
   r(A, B)  =  \begin{pmatrix} A & B \end{pmatrix}
  \quad \mbox{i.e.,} \quad
  r(A, B)  \begin{pmatrix} h \\ k \end{pmatrix}=Ah +Bk,
  \quad h \in \sH, \,\,k \in \sK.
\end{equation}
 Then $r(A,B)$ belongs to $\bB(\sH\times\sK, \sE)$ and
\begin{equation}\label{rowa}
 \ran r (A, B)=\ran A+\ran B.
\end{equation}
Moreover, it is clear from \eqref{column} and \eqref{row} that
\begin{equation}\label{rowb}
c(A,B)^*=r (A^*, B^*).
\end{equation}
Therefore, it follows from \eqref{column} and \eqref{row} that
 \begin{equation}\label{column1}
   c(A,B)^*c(A,B)=A^{*}A+B^{*}B,
\end{equation}
and that
  \begin{equation}\label{column2}
   c(A,B)c(A,B)^*=\begin{pmatrix}
                 AA^* & AB^* \\
                 BA^* & BB^*
               \end{pmatrix}.
\end{equation}

\medskip

For a pair of operators $A\in\bB(\sE,\sH)$ and $B\in\bB(\sE,\sK)$
 the linear relation $L(A,B)$ from  $\sH$ to $\sK$ is defined by
 \begin{equation}\label{Tabegin}
 L(A,B)=\{\,\{Af,Bf\}:\, f\in\sE\,\},
\end{equation}
so that $L(A,B)$ is an operator range relation
as in Theorem \ref{Tclosed}, since in the present notation
\begin{equation}\label{Tab**}
L(A,B)=\ran c(A,B).
\end{equation}
The operator range relation $L(A,B)$  is sometimes called a \textit{quotient},
as, indeed, in the sense of the product of
linear relations one can write $L(A,B)$ as $BA^{-1}$; cf. \cite{Kau78}.
 Note that the domain and the range of  $L(A,B)$
in \eqref{Tabegin} are given by
\[
 \dom L(A,B)=\ran A, \quad \ran L(A,B)=\ran B,
\]
while the kernel and the multivalued part of
$L(A,B)$ are given by
\begin{equation}\label{Tabb}
 \ker L(A,B)=A(\ker B), \quad \mul L(A,B)= B(\ker A).
\end{equation}

 Recall that for a linear relation $T$ from $\sH$ to $\sK$ the adjoint $T^*$
is an automatically closed linear relation from $\sK$ to $\sH$ given by
\begin{equation}\label{adjo}
T^*=JT^\perp=(JT)^{\perp},
\end{equation}
where $J$ stands the flip-flop operator $\{f,g\} \mapsto \{g,-f\}$; in other words
\begin{equation}\label{adjoo}
T^*=\{\,\{h,k\} \in \sK \times \sH :\, (g,h)=(f,k) \mbox{ for all } \{f,g\} \in T\,\}.
\end{equation}
With $J$ the definition of the adjoint can be expressed
in geometric terms as follows $T^*=(JT)^\perp$.
In particular, $T^{**}=T^{\perp \perp}$.

Now apply the definition \eqref{adjo} to the linear relation $L(A,B)$ in \eqref{Tabegin}.
Then, by \eqref{Tab**}, the adjoint of $L(A,B)$ is given by $J L(A,B)^{\perp}=J(\ran c(A,B))^{\perp}$,
i.e.
\[
 L(A,B)^{*}=J \,\ker c(A,B)^{*}=J \,\ker r(A^*, B^*).
\]
In other words, the adjoint of $L(A,B)$ is given by
\begin{equation}\label{Tab*}
 L(A,B)^*=\{\,\{k,h\}\in \sK\times\sH:\, B^*k=A^*h\,\},
\end{equation}
cf. \eqref{adjoo}.
At this point, it is useful to introduce the following linear subspaces
$\sD(A,B)$ and $\sR(A,B)$  of $\sK$ and $\sH$ by
\begin{equation}\label{RAB}
\sD(A,B)=\{k \in \sK:\, B^{*}k\in \ran A^{*}\}, \quad
\sR(A,B)=\{h \in \sH:\, A^{*}h\in \ran B^{*}\},
\end{equation}
respectively.  In other words,
$\sD(A,B)$ is the pre-image $(B^*)^{-1}(\ran A^*)$ and
$\sR(A,B)$ is the pre-image $(A^*)^{-1}(\ran B^*)$.
 Note that
\[
\sD(A,B)=\sK \,\, \Leftrightarrow \,\, \ran B^* \subset \ran A^*,
 \quad
\sR(A,B)=\sH \,\, \Leftrightarrow \,\, \ran A^* \subset \ran B^*.
\]
From the expression \eqref{Tab*} it is seen that
the domain and the range of  $L(A,B)^{*}$
are given by the sets in \eqref{RAB}:
\begin{equation}\label{tabst}
 \dom L(A,B)^*=\sD(A,B), \quad  \ran L(A,B)^*=\sR(A,B).
\end{equation}
Moreover, the kernel and multivalued part of $L(A,B)^{*}$ are given by
\begin{equation}\label{tabst+}
 \ker L(A,B)^*=\ker B^*, \quad \mul L(A,B)^*=\ker A^*,
\end{equation}
respectively. Observe that \eqref{tabst} implies that
\begin{equation}\label{tabst1}
\mul L(A,B)^{**}=\sD(A,B)^{\perp}, \quad
\ker L(A,B)^{**}=\sR(A,B)^{\perp}.
\end{equation}

\medskip

It is helpful to state some general properties for the operators $A\in\bB(\sE,\sH)$ and
$B\in\bB(\sE,\sK)$, in which case $c(A,B) \in \bB(\sE,\sH\times\sK)$.
By means of the Douglas lemma, one sees,  for instance, from \eqref{column1} and \eqref{rowb},  that
\begin{equation}\label{column1+}
\begin{split}
 \ran (A^{*}A+B^{*}B)^\half &= \ran (c(A,B)^*c(A,B))^\half = \ran c(A,B)^*\\
 &=\ran r(A^*,B^*)=\ran A^*+\ran B^*,
\end{split}
\end{equation}
cf. \cite{FW}, and   from \eqref{column2} that
\begin{equation}\label{column2+}
   \ran \begin{pmatrix}
                 AA^* & AB^* \\
                 BA^* & BB^*
               \end{pmatrix}^\half
               =\ran (c(A,B)c(A,B)^*)^\half=\ran c(A,B).
\end{equation}
Moreover, it is clear that $\ran c(A,B)$ and $\ran c(A,B)^*$ are simultaneously closed, and
therefore
 the following spaces
\begin{equation}\label{kwat1}
 \ran c(A,B), \quad \ran c(A,B)^*c(A,B), \quad \ran c(A,B)^*, \quad  \ran c(A,B)^*c(A,B),
\end{equation}
 are closed simultaneously;  see e.g. \cite[Theorem~1.3.5]{BHS}.
 The following characterization of closedness is a direct consequence of these considerations.

\begin{lemma}\label{Tclosed0}
Let the linear relation $L(A,B)$ be given by \eqref{Tabegin}.
Then the following statements are equivalent:
\begin{enumerate}\def\labelenumi{\rm(\roman{enumi})}
\item  $L(A,B)$ is closed;
\item $\ran(A^*A+B^{*}B)$ is closed in $\sE$;
\item $ \ran A^*+ \ran B^{*}$ is closed in $\sE$.
 \end{enumerate}
\end{lemma}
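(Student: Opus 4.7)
The plan is to observe that this lemma is essentially a bookkeeping corollary of the simultaneous closedness of the four ranges listed in \eqref{kwat1} (which has just been asserted from \cite[Theorem~1.3.5]{BHS} in the paragraph preceding the statement). I simply need to identify each of (i), (ii), (iii) with the closedness of one of those four canonical ranges attached to the column operator $c(A,B)\in\bB(\sE,\sH\times\sK)$.

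First I handle (i). By \eqref{Tab**} one has $L(A,B)=\ran c(A,B)\subset \sH\times\sK$, so the closedness of $L(A,B)$ is literally the closedness of $\ran c(A,B)$, which is one of the spaces in \eqref{kwat1}.

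Next I handle (ii). By \eqref{column1} one has the operator identity $c(A,B)^{*}c(A,B)=A^{*}A+B^{*}B$, hence
\[
 \ran(A^{*}A+B^{*}B)=\ran c(A,B)^{*}c(A,B),
\]
which is again one of the spaces in \eqref{kwat1}.

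For (iii) I combine \eqref{rowa} and \eqref{rowb}: they give $c(A,B)^{*}=r(A^{*},B^{*})$ and $\ran r(A^{*},B^{*})=\ran A^{*}+\ran B^{*}$, so
\[
 \ran A^{*}+\ran B^{*}=\ran c(A,B)^{*},
\]
which is the last of the relevant spaces in \eqref{kwat1}. Since the four members of \eqref{kwat1} are closed simultaneously, the equivalences (i)$\Leftrightarrow$(ii)$\Leftrightarrow$(iii) follow at once.

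There is essentially no obstacle here beyond matching identifiers. The only point that needs a moment of attention is the distinction between \emph{sum of ranges} and \emph{range of the sum}: for (ii) one must use the operator identity $c(A,B)^{*}c(A,B)=A^{*}A+B^{*}B$ directly (not a Douglas-type square-root comparison), whereas for (iii) the relevant fact is that $\ran r(A^{*},B^{*})$ equals $\ran A^{*}+\ran B^{*}$ as a sum of subspaces, not as the range of a square root. Once these two pointwise identifications are in place the argument is complete.
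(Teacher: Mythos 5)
Your proposal is correct and is essentially identical to the paper's own proof: both identify (i), (ii), (iii) with the closedness of $\ran c(A,B)$, $\ran c(A,B)^{*}c(A,B)$, and $\ran c(A,B)^{*}$ via \eqref{Tab**}, \eqref{column1}, and \eqref{rowb} together with \eqref{rowa}, and then invoke the simultaneous closedness of the spaces in \eqref{kwat1}. Your extra remark distinguishing the sum of ranges from the range of the sum is a sensible clarification but does not change the argument.
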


\begin{proof}
Thanks to the equivalences in \eqref{kwat1}, the assertions in (i), (ii), and (iii)
follow from   \eqref{Tab**},  \eqref{column1}, and \eqref{rowb} together with \eqref{rowa},
respectively.
 \end{proof}

 The next corollary shows that range space relations admit some specific properties
which are well known in the case of closed operators; cf. the closed graph theorem.
The following result goes back to Foias \cite{FW}, see also \cite{Labro08}; the present
proof seems to be new.

\begin{corollary}
Let $T$ be a range space relation from $\sH$ to $\sK$. Then the following implications hold:
\begin{enumerate}\def\labelenumi{\rm(\roman{enumi})}
\item If $\,\mul T=\{0\}$, then $\dom T$ is closed implies that $T$ is a bounded operator;
\item If $\,\ker T=\{0\}$, then $\ran T$ is closed implies that $T^{-1}$ is a bounded operator.
\end{enumerate}
\end{corollary}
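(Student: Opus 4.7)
The plan is to reduce both (i) and (ii) to the closedness criterion of Lemma~\ref{Tclosed0} and then invoke the closed graph theorem. By Theorem~\ref{Tclosed}(b) we may write $T=L(A,B)$ for some $A\in\bB(\sE,\sH)$ and $B\in\bB(\sE,\sK)$; the identities \eqref{Tabb} then translate the hypotheses on $T$ into purely algebraic statements about $A$ and $B$.

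For (i), the assumption $\mul T=\{0\}$ reads $B(\ker A)=\{0\}$, i.e., $\ker A\subseteq\ker B$, while the additional hypothesis is that $\dom T=\ran A$ is closed. The decisive step I would carry out first is to invert $A$ on its (closed) range: the restriction $A_{0}:=A\uphar(\ker A)^{\perp}\colon(\ker A)^{\perp}\to\ran A$ is a bounded bijection between Hilbert spaces, hence boundedly invertible by the open mapping theorem. Combined with the kernel inclusion, the operator $C:=B\,A_{0}^{-1}$, extended by zero on $(\ran A)^{\perp}$, then lies in $\bB(\sH,\sK)$ and satisfies $B=CA$. Taking adjoints gives $\ran B^{*}\subseteq\ran A^{*}$, and therefore $\ran A^{*}+\ran B^{*}=\ran A^{*}$, which is closed since $\ran A$ is. Lemma~\ref{Tclosed0} now yields that $T$ is a closed relation; since it has trivial multivalued part and closed domain, the closed graph theorem forces $T$ to be a bounded operator.

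For (ii) I would appeal to symmetry: the inverse $T^{-1}=L(B,A)$ is again an operator range relation, and \eqref{Tabb} gives $\mul T^{-1}=A(\ker B)=\ker T=\{0\}$ together with $\dom T^{-1}=\ran T$ closed. Applying (i) to $T^{-1}$ then shows that $T^{-1}$ is a bounded operator. The only step that is not pure bookkeeping is the factorization $B=CA$ used in (i), which is the bridge that converts closedness of $\dom T$ into the range inclusion $\ran B^{*}\subseteq\ran A^{*}$ required to trigger Lemma~\ref{Tclosed0}; beyond this I do not anticipate any real obstacle.
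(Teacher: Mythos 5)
Your proof is correct and follows essentially the same route as the paper: reduce to the closedness criterion of Lemma~\ref{Tclosed0} via the inclusion $\ran B^{*}\subseteq\ran A^{*}$ and finish with the closed graph theorem, with (ii) obtained by passing to $T^{-1}$. The only (harmless) difference is that you reach $\ran B^{*}\subseteq\ran A^{*}$ by explicitly building the bounded factor $C$ with $B=CA$ and taking adjoints, whereas the paper gets it directly from $\ker A\subseteq\ker B\Leftrightarrow\cran B^{*}\subseteq\cran A^{*}$ together with the closedness of $\ran A^{*}$.
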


\begin{proof}
(i) By assumption one can write $T=L(A,B)$ with some operators $A$ and $B$ as in \eqref{Tabegin}.
Now the assumption $\mul T=\{0\}$ means that
\[
 \ker A \subset \ker B \quad \mbox{or, equivalently,}
 \quad \cran B^* \subset \cran A^*.
\]
If $\dom T=\ran A$ is closed, then also $\ran A^*$ closed.
Hence, in fact, $\ran B^* \subset\ran A^*$ and
\[
 \ran A^* +\ran B^*=\ran A^* \quad \text{is closed}.
\]
By Lemma \ref{Tclosed0} $L(A,B)$ a closed operator and the statement follows from the closed graph theorem.

(ii) This is obtained by applying (i) to the inverse $T^{-1}$.
\end{proof}

Let $A\in\bB(\sE,\sH)$, $B\in\bB(\sE,\sK)$ and
let the linear relation $L(A,B)$ be given by \eqref{Tabegin}.
In general, there will be some redundance
in the representation \eqref{Tabegin}; cf. \eqref{redux}.
In fact, the redundant part is given by the closed linear subspace
 \[
 \ker c(A,B)=\ker A \cap \ker B.
\]
Observe that it follows from the identities
\[
(\ran (A^*A+B^*B))^\perp=\ker  (A^*A+B^*B) =\ker A \cap \ker B,
\]
that the space $\sE$ has
the following
 orthogonal decomposition:
\begin{equation}\label{Tabbb}
\sE=\cran (A^*A+B^*B)  \oplus (\ker A \cap \ker B).
\end{equation}
 Hence, one may reduce the representation in \eqref{Tabegin}
by introducing  the restrictions
$A_0$ and $B_0$ of $A$ and $B$ to
the closed linear subspace $\sE_0= \cran (A^*A+B^*B)$
of $\sE$, cf. \eqref{redux}, so that
\begin{equation}\label{uno}
A=r (A_0 , O_{\ker A \, \cap \, \ker B}), \quad B=r(B_0, O_{\ker A \,\cap \,\ker B}),
\end{equation}
with respect to the decomposition \eqref{Tabbb}.
It is clear that  $\ker A_0 \cap \ker B_0=\{0\}$, and thus
\begin{equation}\label{Tabegin-}
 L(A,B)=\{\,\{A_0 f,B_0 \, f\}:\, f\in \sE_0\,\}
 \end{equation}
is a representation in terms of the reduced part $c(A_0, B_0)$ of $c(A,B)$.
Recall that reduced representations as in \eqref{Tabegin-}
are uniquely defined, up to everywhere defined operators
which are bounded and boundedly invertible; cf. Lemma \ref{opran+++}.
Furthermore, via \eqref{column2},
 one has from \eqref{uno} that, with respect to the decomposition  \eqref{Tabbb},
\begin{equation}\label{duo}
 A^*A+B^*B=\begin{pmatrix} (A_0)^*A_0+(B_0)^*B_0 & 0 \\0 &0\end{pmatrix}.
\end{equation}
It is clear that $\cran( (A_0)^*A_0+(B_0)^*B_0)=\sE_0$; hence one obtains the following lemma.

\begin{lemma}\label{Tclosed0n}
Let the linear relation $L(A,B)$ be given by \eqref{Tabegin}
and let $A_0$ and $B_0$ as in \eqref{uno}
Then the following statements are equivalent:
\begin{enumerate}\def\labelenumi{\rm(\roman{enumi})}
\item  $L(A,B)$ is closed;
\item $\ran( (A_0)^*A_0+(B_0)^*B_0)=\sE_0$.
 \end{enumerate}
\end{lemma}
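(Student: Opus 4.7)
The plan is to reduce the statement directly to Lemma~\ref{Tclosed0} by exploiting the block form of $A^*A+B^*B$ given in \eqref{duo}. Since Lemma~\ref{Tclosed0} already provides the equivalence between closedness of $L(A,B)$ and closedness of $\ran(A^*A+B^*B)$ in $\sE$, the task amounts to translating this latter condition into a statement purely about $(A_0)^*A_0+(B_0)^*B_0$ acting on $\sE_0$.

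First I would use \eqref{duo}, read relative to the orthogonal decomposition \eqref{Tabbb}, to observe that
\[
\ran(A^*A+B^*B)=\ran\bigl((A_0)^*A_0+(B_0)^*B_0\bigr),
\]
where the right-hand side is regarded as a subspace of $\sE_0$. Consequently, $\ran(A^*A+B^*B)$ is closed in $\sE$ if and only if $\ran((A_0)^*A_0+(B_0)^*B_0)$ is closed in $\sE_0$; this is automatic because $\sE_0$ is itself closed in $\sE$, so the two notions of closedness for a subspace of $\sE_0$ agree.

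Next I would invoke the remark recorded immediately before the lemma, namely that $\cran((A_0)^*A_0+(B_0)^*B_0)=\sE_0$. A dense linear subspace is closed precisely when it coincides with the ambient Hilbert space, whence
\[
\ran\bigl((A_0)^*A_0+(B_0)^*B_0\bigr) \text{ is closed in } \sE_0
\,\,\Longleftrightarrow\,\, \ran\bigl((A_0)^*A_0+(B_0)^*B_0\bigr)=\sE_0.
\]
Combining this equivalence with Lemma~\ref{Tclosed0} delivers (i)~$\Leftrightarrow$~(ii).

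The argument is essentially immediate once the reduced block form \eqref{duo} and the density fact $\cran((A_0)^*A_0+(B_0)^*B_0)=\sE_0$ are in hand; both are already in place in the exposition leading up to the lemma, so I do not anticipate any serious obstacle. The only substantive point is that after passing to the reduced operator, the abstract closedness condition of Lemma~\ref{Tclosed0} sharpens to an \emph{equality} with $\sE_0$, and this sharpening is driven entirely by the density property built into the reduction \eqref{uno}.
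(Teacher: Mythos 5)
Your argument is correct and is exactly the one the paper intends: the lemma is stated there as an immediate consequence of the block form \eqref{duo}, the density $\cran((A_0)^*A_0+(B_0)^*B_0)=\sE_0$, and Lemma~\ref{Tclosed0}, which is precisely the chain of reductions you carry out. No gaps.
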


 In the rest of this section attention is paid to the case where the linear relation $L(A,B)$ is closed.
In this case the operators $A$ and $B$ representing $L(A,B)$ can be replaced by a normalized pair
$A^*A+B^*B=I$, in which case certain formulas simplify. In fact, it is instructive to first consider the case
where $A^*A+B^*B$ is an orthogonal projection.

\begin{lemma}\label{Tclosed0Q}
Assume that there exists an orthogonal projection $Q$ in $\sE$ such that
\begin{equation}\label{isomTabQ}
  A^*A+B^*B=Q,
\end{equation}
in which case the redundant part is given by $\ker A \cap \ker B=\ker Q$.
Then the relation $L(A,B)$ is closed and  the orthogonal  projection
from $\sH \times \sK$ onto $L(A,B)$  is given by
\begin{equation}\label{rutte0}
 P_{L(A,B)}
  =\begin{pmatrix}
                 AA^* & AB^* \\
                 BA^* & BB^*
               \end{pmatrix}.
 \end{equation}
Moreover, the orthogonal  projection
from $\sH \times \sK$ onto $L(A,B)^*$  is given by
\begin{equation}\label{rutte1}
 P_{L(A,B)^*}=\begin{pmatrix} I-BB^* & BA^*\\ A B^* & I-AA^*
 \end{pmatrix}.
\end{equation}
\end{lemma}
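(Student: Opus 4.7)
The plan is to reduce everything to the single observation that the hypothesis \eqref{isomTabQ} makes $c(A,B)$ a partial isometry. Indeed, combining \eqref{isomTabQ} with \eqref{column1} gives
\[
c(A,B)^* c(A,B) = A^*A + B^*B = Q,
\]
and at the same time $\ker c(A,B) = \ker(A^*A+B^*B) = \ker A \cap \ker B = \ker Q$, which verifies the stated identification of the redundant part. Because $Q$ is an orthogonal projection, the displayed identity is exactly the partial-isometry condition for $c(A,B)$: one has $\|c(A,B)\varphi\|^2 = (Q\varphi,\varphi) = \|\varphi\|^2$ for $\varphi \in \ran Q$, while $c(A,B)$ vanishes on $\ker Q$. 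The initial space is thus $\ran Q$ and the final space is $\ran c(A,B) = L(A,B)$.

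From here the first two assertions follow at once. The range of a partial isometry is automatically closed, so $L(A,B)$ is closed. Moreover, for any partial isometry $V$ the product $VV^*$ is the orthogonal projection onto $\ran V$; applied to $V = c(A,B)$ together with the block formula \eqref{column2}, this yields precisely \eqref{rutte0}.

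For the remaining formula I would invoke the geometric description $L(A,B)^* = J\,L(A,B)^\perp$ from \eqref{adjo}. Since $J$ is unitary, it intertwines orthogonal projections in the usual way, so
\[
P_{L(A,B)^*} = J\,P_{L(A,B)^\perp}\,J^* = J\bigl(I - P_{L(A,B)}\bigr)J^*.
\]
Representing $J$ in block form as $\begin{pmatrix} 0 & I_\sK \\ -I_\sH & 0 \end{pmatrix}$ and substituting \eqref{rutte0} reduces the verification of \eqref{rutte1} to a direct $2\times 2$ block multiplication.

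I do not foresee any genuine obstacle: once the partial-isometry observation is in place, \eqref{rutte0} is forced and \eqref{rutte1} is a mechanical consequence of the adjoint formula together with the unitarity of $J$. The only point requiring care is the sign and block placement in the representation of $J$, so that the final block product lines up correctly with the matrix displayed in \eqref{rutte1}.
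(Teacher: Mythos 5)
Your proposal is correct and takes essentially the same route as the paper: the hypothesis $c(A,B)^*c(A,B)=Q$ makes $c(A,B)$ a partial isometry (the paper phrases this as $c(A,B)c(A,B)^*$ being idempotent and selfadjoint), so $c(A,B)c(A,B)^*$ is the orthogonal projection onto the closed range $L(A,B)$, which is exactly \eqref{rutte0}. The only, harmless, variation is in the adjoint formula: you obtain \eqref{rutte1} by conjugating $I-P_{L(A,B)}$ with the unitary $J$ via $L(A,B)^*=J\,L(A,B)^\perp$, whereas the paper directly exhibits the projection onto $JL(A,B)$ through the column $c(B,-A)$ and then subtracts it from the identity; both rest on \eqref{adjo} and your block computation with $J=\begin{pmatrix} 0 & I_{\sK} \\ -I_{\sH} & 0\end{pmatrix}$ does produce the matrix in \eqref{rutte1}.
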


\begin{proof}
Since $\ran Q$ is closed, it follows from Lemma \ref{Tclosed} that $L(A,B)$ is closed.
Let $\Phi=c(A,B)$, so that $\Phi \in \mathbf{B}(\sE, \sH \times \sK)$. Then \eqref{isomTabQ}
means that $\Phi^*\Phi=Q$ and $\ran \Phi^*=\ran Q$. Observe that
\[
 (\Phi \Phi^*) (\Phi \Phi^*)= \Phi (\Phi^* \Phi) \Phi^*=\Phi Q \Phi^*=\Phi \Phi^*,
\]
so that $\Phi \Phi^*$ is an orthogonal projection, mapping onto $\ran \Phi \Phi^*=\ran \Phi$.
Thus the statement follows from \eqref{column2};   cf. \cite[Appendix D, p. 703]{BHS}.

 Note that  the condition \eqref{isomTabQ} implies that
$\ker A \cap \ker B=\ker \Phi=\ker Q$.  Hence it follows that $\ran r(A^{*}, B^{*})$
 is dense in $\sE_0$ and thus $\ran r(A^{*}, B^{*})=\sE_0=\ran Q$
 by Lemma \ref{Tclosed0}.
Note that therefore  also $\ran r (B^{*}, -A^{*})=\sE_0$,
 so that the selfadjoint mapping
\[
 \begin{pmatrix} B \\ -A \end{pmatrix}
 \begin{pmatrix} B \\ -A \end{pmatrix}^*
 =\begin{pmatrix} B \\ -A \end{pmatrix} (B^*\,\, -A^*)
\]
takes $\sK \times \sH$ onto $JL(A,B)$. Moreover, this mapping is,
due to \eqref{isomTabQ}, idempotent. Thus the orthogonal projection
from $\sK \times \sH$ onto $JL(A,B)$ is given by
\[
 P_{JL(A,B)}=\begin{pmatrix} B \\ -A \end{pmatrix}
 \begin{pmatrix} B \\ -A \end{pmatrix}^*
 =\begin{pmatrix} BB^* & -BA^*\\ -AB^* & AA^* \end{pmatrix},
\]
which is equivalento to \eqref{rutte0}.

Since $L(A,B)^*=(JL(A,B))^\bot$,  the orthogonal projection
onto $L(A,B)^* $ is given by
\[
 P_{L(A,B)^*}=I-P_{JL(A,B)}
 =\begin{pmatrix} I-BB^* & BA^*\\ A B^* & I-AA^*
 \end{pmatrix},
\]
which gives \eqref{rutte1}.
 \end{proof}

Recall that the adjoint relation $L(A,B)^*$ has the representation \eqref{Tab*}.
As a closed linear relation from $\sK$ to $\sH$ it is an operator range relation.
Under the condition \eqref{isomTabQ}  such a parametrization can be made explicit via
Lemma \ref{Tclosed0Q}.

\begin{corollary}\label{*Repres}
Let the linear relation $L(A,B)$ be given by \eqref{Tabegin}
and assume that  the condition \eqref{isomTabQ} holds.
Then  the adjoint relation $L(A,B)^*$ is given by
 \[
  L(A,B)^*
=\bigl\{ \{  (I-BB^*)\varphi +BA^* \psi,
\, A B^* \varphi +(I-AA^*) \psi \}:\,
\varphi \in \sK, \, \psi \in \sH \bigr\}.
 \]
 \end{corollary}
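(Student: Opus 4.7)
The plan is to use Lemma~\ref{Tclosed0Q} directly: it already supplies an explicit matrix formula for the orthogonal projection $P_{L(A,B)^*}$ from $\sK \times \sH$ onto $L(A,B)^*$, namely
\[
 P_{L(A,B)^*}=\begin{pmatrix} I-BB^* & BA^*\\ AB^* & I-AA^* \end{pmatrix}.
\]
Since $L(A,B)^*$ is a closed linear relation from $\sK$ to $\sH$, and since every orthogonal projection is surjective onto its range, one has $L(A,B)^* = \ran P_{L(A,B)^*}$. Thus the parametrization in the corollary is obtained simply by letting $P_{L(A,B)^*}$ act on an arbitrary column vector $\{\varphi,\psi\}$ with $\varphi \in \sK$ and $\psi \in \sH$, and reading off the two components of the result.

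Concretely, I would carry out the argument in three short steps. First, invoke Lemma~\ref{cloclo} together with Lemma~\ref{Tclosed0Q} to conclude that $L(A,B)^*$ is a closed linear subspace of $\sK \times \sH$ onto which the above projection maps. Second, note the standard fact that a closed linear subspace $M$ of a Hilbert space coincides with $\ran P_M$, so that every element $\{k,h\}\in L(A,B)^*$ can be written as $P_{L(A,B)^*}\{\varphi,\psi\}$ for some $\{\varphi,\psi\}\in \sK\times\sH$, and conversely every such image lies in $L(A,B)^*$. Third, compute the matrix product to identify
\[
 k=(I-BB^*)\varphi + BA^*\psi,\qquad h=AB^*\varphi+(I-AA^*)\psi,
\]
which yields the claimed description.

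No real obstacle is anticipated: once Lemma~\ref{Tclosed0Q} is in hand, this is a direct read-off. The only minor point to keep straight is bookkeeping the component order — $L(A,B)^*$ is a relation from $\sK$ to $\sH$ (not from $\sH$ to $\sK$), so the first block of the column $\{\varphi,\psi\}$ must sit in $\sK$ and the second in $\sH$, matching the indexing of the $2\times 2$ operator matrix in Lemma~\ref{Tclosed0Q}. Provided this is respected, the corollary follows immediately and needs no further verification of the defining relation $B^*k=A^*h$ from \eqref{Tab*}, since it has already been encoded in the projection formula.
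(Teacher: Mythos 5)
Your proposal is correct and follows essentially the same route as the paper: the corollary is presented there as a direct consequence of Lemma~\ref{Tclosed0Q}, obtained by observing that the closed subspace $L(A,B)^*$ equals the range of its orthogonal projection and reading off the components of $P_{L(A,B)^*}\{\varphi,\psi\}$. Your attention to the ordering of the components (first block in $\sK$, second in $\sH$) is exactly the right bookkeeping point.
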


  The pair of bounded linear operators $A\in\bB(\sE,\sH)$ and
$B\in\bB(\sE,\sK)$
  in \eqref{Tabegin} is said to be \textit{normalized} if
\begin{equation}\label{isomTab}
 A^*A+B^*B=I.
\end{equation}
 In this case, the relation $L(A,B)$ is closed by Lemma \ref{Tclosed0Q}
and the representation of $L(A,B)$ in \eqref{Tabegin}
is reduced.

\begin{lemma}\label{Lclosed}
Let the linear relation $L(A,B)$ be given by \eqref{Tabegin}
and assume that $L(A,B)$ is closed. Then $L(A,B)$
can be represented by a normalized pair.
 \end{lemma}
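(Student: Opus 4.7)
The plan is to reduce the representation and then rescale the source space by an appropriate bounded self-adjoint operator so that the normalization condition $A^*A+B^*B=I$ is achieved.

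First, I would pass to the reduced representation. Using the decomposition \eqref{Tabbb} and the restrictions $A_{0}$ and $B_{0}$ defined in \eqref{uno}, one has $L(A,B)=L(A_{0},B_{0})$ by \eqref{Tabegin-}, with $A_{0} \in \bB(\sE_{0},\sH)$, $B_{0} \in \bB(\sE_{0},\sK)$, and $\ker A_{0} \cap \ker B_{0}=\{0\}$. Since $L(A,B)$ is closed, Lemma \ref{Tclosed0n} gives $\ran(A_{0}^{*}A_{0}+B_{0}^{*}B_{0})=\sE_{0}$. Setting $S_{0}:=A_{0}^{*}A_{0}+B_{0}^{*}B_{0}\in\bB(\sE_{0})$, the operator $S_{0}$ is nonnegative, and since $\ker S_{0}=\ker A_{0}\cap\ker B_{0}=\{0\}$, the closed graph (or inverse mapping) theorem provides a bounded inverse $S_{0}^{-1}\in\bB(\sE_{0})$. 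Consequently $S_{0}^{1/2}$ is also bounded and boundedly invertible on $\sE_{0}$.

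Next I would define the rescaled pair
\[
 A_{1}:=A_{0}\,S_{0}^{-1/2}\in\bB(\sE_{0},\sH), \qquad
 B_{1}:=B_{0}\,S_{0}^{-1/2}\in\bB(\sE_{0},\sK).
\]
A direct computation gives
\[
 A_{1}^{*}A_{1}+B_{1}^{*}B_{1}
 =S_{0}^{-1/2}(A_{0}^{*}A_{0}+B_{0}^{*}B_{0})S_{0}^{-1/2}
 =S_{0}^{-1/2}\,S_{0}\,S_{0}^{-1/2}=I_{\sE_{0}},
\]
so $(A_{1},B_{1})$ is normalized. Moreover, because $S_{0}^{-1/2}$ is a bijection of $\sE_{0}$ onto itself,
\[
 L(A_{1},B_{1})=\{\,\{A_{0}S_{0}^{-1/2}f,\,B_{0}S_{0}^{-1/2}f\}:f\in\sE_{0}\,\}
 =\{\,\{A_{0}g,B_{0}g\}:g\in\sE_{0}\,\}=L(A_{0},B_{0}),
\]
which coincides with $L(A,B)$. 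This yields the desired normalized representation.

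The only delicate point is the existence of the bounded inverse $S_{0}^{-1/2}$, which is exactly where the closedness assumption is used (via Lemma \ref{Tclosed0n}); once this is in hand, the construction above is routine. Alternatively, one can rephrase the argument in the spirit of Lemma \ref{opran+++} by taking $W=S_{0}^{-1/2}\in\bB(\sE_{0})$ as the bounded boundedly invertible intertwiner between the representations $c(A_{0},B_{0})$ and $c(A_{1},B_{1})=c(A_{0},B_{0})W$, which makes explicit that the normalized representation is just another admissible parametrization of the same operator range relation.
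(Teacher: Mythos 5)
Your proof is correct and follows essentially the same route as the paper: pass to the reduced representation, use Lemma \ref{Tclosed0n} to get $\ran(A_0^*A_0+B_0^*B_0)=\sE_0$, and rescale by $(A_0^*A_0+B_0^*B_0)^{-1/2}$ to obtain the normalized pair. You merely spell out a few steps the paper leaves implicit (the bounded invertibility of $S_0^{1/2}$ and the verification that the rescaling does not change the relation).
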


\begin{proof}
 Since $L(A,B)$ is closed,
 it is known by Lemma \ref{Tclosed0}
 that $\ran (A^*A+B^*B)$ is closed.
 Thus for the reduced representation in \eqref{Tabegin-} one  now has
\[
\ran ((A_0)^*A_0+(B_0)^*B_0)=\sE_0
\]
by Lemma \ref{Tclosed0n}.
Replacing $A_0$ and $B_0$ by the equivalent pair
\[
 A_0'=A_0((A_0)^*A_0+(B_0)^*B_0)^{-\half}
 \quad \mbox{and} \quad B_0'=B_0((A_0)^*A_0+(B_0)^*B_0)^{-\half}
\]
leads to   $L(A,B)=L(A_0',B_0')$, where $A_0'\in\bB(\sE_0,\sH)$ and
$B_0'\in\bB(\sE_0,\sK)$ form a normalized pair.
\end{proof}

 If $A\in\bB(\sE,\sH)$ and $B\in\bB(\sE,\sK)$ are  normalized as in \eqref{isomTab},
then the orthogonal operator part of $L(A,B)=BA^{-1}$
can be rewritten  in terms of the \textit{Moore-Penrose inverse} $A^{(-1)}$
of $A$, which is an operator from $\ran A \subset \sH$ to $(\ker A)^\perp \subset \sE$.
In fact, it is defined as follows: for $\psi \in \ran A$, there exists a unique $\varphi \in (\dom A)^\perp$
such that $A\varphi=\psi$, and $A^{(-1)} \psi := \varphi$; cf. \cite{BHS}.
Note that in the literature one often also extends $A^{(-1)}$ by $A^{(-1)} \psi=0$ for $\psi \in (\ran A)^\perp$.

\begin{lemma}\label{*Repress}
Let the linear relation $L(A,B)$ be given by \eqref{Tabegin}
and assume that  the condition \eqref{isomTab} holds.
Then the orthogonal operator part $L(A,B)_{\rm s}$ is given by
\begin{equation}\label{ss}
 L(A,B)_{\rm s}=\{\,\{A  \varphi, B \varphi \}:\,
 \varphi \in (\ker A)^{\perp}  \}, 
\end{equation}
and, consequently,
\begin{equation}\label{ssa}
 L(A,B)_{\rm s} 
 =B A^{(-1)}.
\end{equation}
\end{lemma}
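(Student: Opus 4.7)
The plan is to unpack $L(A,B)_{\rm s}$ as the orthogonal operator part of the closed relation $L(A,B)$ — that is, the orthogonal complement inside $L(A,B)$ of its purely multivalued component $\{0\}\times\mul L(A,B)$ — and then to verify \eqref{ss} by showing both inclusions. The normalization \eqref{isomTab} gives, via Lemma~\ref{Tclosed0Q} with $Q=I$, that $L(A,B)$ is closed; in particular $\mul L(A,B)=B(\ker A)$ by \eqref{Tabb} is closed in $\sK$. Consequently, an element $\{Af,Bf\}\in L(A,B)$ belongs to $L(A,B)_{\rm s}$ exactly when $Bf\perp B(\ker A)$.

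The heart of the argument is the computation that for $\varphi\in(\ker A)^\perp$ and $\psi\in\ker A$ one has
\begin{equation*}
(B\varphi,B\psi)=(\varphi,B^*B\psi)=(\varphi,(I-A^*A)\psi)=(\varphi,\psi)-(A\varphi,A\psi)=0,
\end{equation*}
using \eqref{isomTab} together with $\varphi\perp\psi$ and $A\psi=0$. This immediately yields the inclusion $\supseteq$ in \eqref{ss}. For the reverse inclusion, I decompose an arbitrary $f\in\sE$ as $f=f_1+f_2$ with $f_1\in(\ker A)^\perp$ and $f_2\in\ker A$, so that $Af=Af_1$ and $Bf=Bf_1+Bf_2$ with $Bf_2\in\mul L(A,B)$. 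If $\{Af,Bf\}\in L(A,B)_{\rm s}$, then subtracting the element $\{Af_1,Bf_1\}\in L(A,B)_{\rm s}$ (from the first inclusion) leaves $\{0,Bf_2\}\in L(A,B)_{\rm s}\cap(\{0\}\times\mul L(A,B))=\{\{0,0\}\}$, forcing $Bf_2=0$ and hence $\{Af,Bf\}=\{Af_1,Bf_1\}$ with $f_1\in(\ker A)^\perp$.

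The formula \eqref{ssa} is then a direct translation of \eqref{ss}: by the definition of the Moore–Penrose inverse recalled just before the lemma, one has $A^{(-1)}(A\varphi)=\varphi$ for $\varphi\in(\ker A)^\perp$, so each pair $\{A\varphi,B\varphi\}$ is exactly $\{h,BA^{(-1)}h\}$ with $h=A\varphi\in\ran A$, and taking the union over $\varphi\in(\ker A)^\perp$ reproduces the graph of $BA^{(-1)}$. The only genuinely delicate step is the orthogonality computation above: it is the unique place where the normalization $A^*A+B^*B=I$ intervenes in an essential way, converting orthogonality in $\sE$ into orthogonality of the $B$-images in $\sK$; everything else is routine decomposition plus the standard characterization of the orthogonal operator part.
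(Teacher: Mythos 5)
Your proposal is correct and follows essentially the same route as the paper: the decisive step in both is the orthogonal decomposition $\sE=(\ker A)^\perp\oplus\ker A$ together with the computation $(B\varphi,B\psi)=(\varphi,(I-A^*A)\psi)=0$ for $\varphi\in(\ker A)^\perp$, $\psi\in\ker A$, which shows that the range decomposition of $L(A,B)$ splits orthogonally off the multivalued part $B(\ker A)$. The paper concludes directly from this orthogonality that $(I-P_{\mul L(A,B)})$ applied to the representation yields \eqref{ss}, whereas you verify the two inclusions explicitly; this is only a difference in bookkeeping, not in substance.
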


\begin{proof}
Consider the representation \eqref{Tabegin}. Since $A \in \bB( \sE, \sH)$,
the orthogonal decomposition
\[
 \sE=(\ker A)^{\perp} \oplus \ker A
\]
leads to the alternative representation
\begin{equation}\label{aabb1}
 L(A,B)
 =\{\,\{A  \varphi, B \varphi+B \psi\}:\,
 \varphi \in (\ker A)^{\perp},\, \psi\in \ker A\,\},
\end{equation}
 cf. \cite{HSS2018}.
Note that for all 
$\varphi \in (\ker A)^{\perp}$ and $\psi \in \ker A$, one has by \eqref{conn}
\[
\begin{split}
 (B \varphi, B \psi)&=( B^*B \varphi, \psi)
 =( (I-A^*A) \varphi, \psi) \\
& =(\varphi,\psi)- (A \varphi, A \psi)=0.
\end{split}
\]
Consequently, the range decomposition in the representation \eqref{aabb1} is orthogonal.
Recall from \eqref{Tabb} that $\mul L(A,B)=B(\ker A)$.
Thus the orthogonal operator part of $L(A,B)$ has the representation \eqref{ss}.
The representation \eqref{ssa} follows from the definition of $A^{(-1)}$.
\end{proof}

 If $A\in\bB(\sE,\sH)$ and $B\in\bB(\sE,\sK)$ satisfy \eqref{isomTabQ},  
then the statement in
Corollary \ref{*Repres} is a direct consequence of Lemma \ref{Tclosed0Q}.
Under the stronger condition \eqref{isomTab},
a slightly different looking result can be obtained 
by invoking the polar decomposition of the operator $A$, i.e.,
\[
A=V_A(A^*A)^{1/2},
\]
where $V_A$ is the unique partial isometry from $\sE$ to $\sH$ with initial space $\cran A^*$ and final space $\cran A$.
Recall that $I-V_A (V_A)^*=P_{\,\ker A^*}$.
By means of the polar decomposition of $A$ one sees that
the normalization  \eqref{isomTab} leads to
\[
\begin{split}
 I-AA^*&=I-V_A A^*A(V_A)^*\\
 &=I-V_A (V_A)^*+V_AB^*B(V_A)^*=P_{\,\ker A^*}+V_AB^*B(V_A)^*.
\end{split}
\]
Moreover, by the well-known commutation relations
\[
B(I-B^*B)^{1/2}=(I-BB^*)^{1/2}B, \quad
(I-B^*B)^{1/2}B^*=B^*(I-BB^*)^{1/2},
\]
one obtains from the normalization  \eqref{isomTab} that
\[
AB^{*}=V_A(A^*A)^{1/2}B^{*}=V_{A} (I-B^{*}B)^{{1/2}}B^{*}=V_{A} B^{*}(I-BB^{*})^{{1/2}}.
\]
Therefore the orthogonal  projection
from $\sH \times \sK$ onto $L(A,B)^*$ in Lemma \ref{Tclosed0Q}   is given by
 \[
\begin{split}
 P_{L(A,B)^*}& =\begin{pmatrix} I-BB^* & (I-BB^*)^{1/2}B(V_A)^*\\
 V_AB^*(I-BB^*)^{1/2} & P_{\,\ker A^*}+ V_AB^*B(V_A)^* \end{pmatrix}\\
 & =\begin{pmatrix} (I-BB^*)^{1/2}\\ V_AB^* \end{pmatrix}
 \begin{pmatrix} (I-BB^*)^{1/2}\\ V_AB^* \end{pmatrix}^*
    + \begin{pmatrix} 0 & 0\\ 0 & P_{\,\ker A^*}\end{pmatrix}.
\end{split}
\]
From this, the following result is now clear.

\begin{corollary}\label{starstar}
Let the linear relation $L(A,B)$ be given by \eqref{Tabegin}
and assume that  the condition \eqref{isomTab} holds.  Then
\[
 L(A,B)^*=\{\,\{(I-BB^*)^{1/2}k,V_A B^*k+ P_{\,\ker A^*}h\}:\,
 h\in\sH,\, k\in \sK\,\},
\]
where the range decomposition is orthogonal.
\end{corollary}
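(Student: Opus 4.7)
My plan is to read off the asserted parametrization directly from the additive decomposition of the projection $P_{L(A,B)^{*}}$ that is essentially completed in the paragraph preceding the corollary. Since \eqref{isomTab} is the special case $Q=I$ of \eqref{isomTabQ}, Lemma~\ref{Tclosed0Q} identifies $P_{L(A,B)^{*}}$ with the block matrix \eqref{rutte1}, and the polar-decomposition identities recorded immediately above the statement yield
\[
 P_{L(A,B)^{*}} = \begin{pmatrix} (I-BB^{*})^{1/2} \\ V_A B^{*} \end{pmatrix}
 \begin{pmatrix} (I-BB^{*})^{1/2} \\ V_A B^{*} \end{pmatrix}^{*}
 + \begin{pmatrix} 0 & 0 \\ 0 & P_{\,\ker A^{*}} \end{pmatrix}.
\]

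Next, I would apply $P_{L(A,B)^{*}}$ to an arbitrary $\{k,h\}\in\sK\times\sH$ and collect terms, obtaining the vector $\{(I-BB^{*})^{1/2}k',\,V_A B^{*}k' + P_{\,\ker A^{*}}h\}$ with $k' := (I-BB^{*})^{1/2}k + BV_A^{*}h\in\sK$. Since $L(A,B)^{*}=\ran P_{L(A,B)^{*}}$, this already shows that every element of $L(A,B)^{*}$ has the form claimed by the corollary. For the reverse inclusion I would verify directly from the defining relation $B^{*}\kappa=A^{*}\eta$ in \eqref{Tab*} that any vector $\{(I-BB^{*})^{1/2}k,\,V_A B^{*}k + P_{\,\ker A^{*}}h\}$ lies in $L(A,B)^{*}$; this reduces to the commutation relation $B^{*}(I-BB^{*})^{1/2}=(I-B^{*}B)^{1/2}B^{*}$, the polar-decomposition identity $A^{*}V_A=(A^{*}A)^{1/2}$, the normalization $(A^{*}A)^{1/2}=(I-B^{*}B)^{1/2}$, and $A^{*}P_{\,\ker A^{*}}=0$.

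To close the proof I would verify that the range decomposition is orthogonal. Pairing a generic element $\{(I-BB^{*})^{1/2}k,\,V_A B^{*}k\}$ of the first summand with a generic element $\{0,\,P_{\,\ker A^{*}}h\}$ of the second, the $\sK$-component contributes $0$, while the $\sH$-inner product $(V_A B^{*}k,\,P_{\,\ker A^{*}}h)$ vanishes because $\ran V_A = \cran A = (\ker A^{*})^{\perp}$.

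I anticipate no serious obstacle: the heavy lifting is the block manipulation of $P_{L(A,B)^{*}}$, which is already recorded in the paragraph preceding the statement. The only point deserving care is avoiding an unnecessary density or closure argument; this is circumvented because $L(A,B)^{*}$ coincides with the \emph{full} range of the orthogonal projection $P_{L(A,B)^{*}}$, so the explicit action of $P_{L(A,B)^{*}}$ on $\{k,h\}\in\sK\times\sH$ already exhausts $L(A,B)^{*}$ without any taking of limits.
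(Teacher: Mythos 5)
Your proposal is correct and follows the same route as the paper: the paper derives the additive decomposition of $P_{L(A,B)^*}$ in the paragraph preceding the corollary and then declares the result ``now clear,'' which is exactly the step you carry out explicitly by applying the projection to a generic $\{k,h\}$. Your added verifications (the reverse inclusion via \eqref{Tab*} and the orthogonality of the two summands) are sound and merely fill in details the paper leaves implicit.
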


Note that the results in Corollary \ref{*Repres} and Corollary \ref{starstar}   are of practical importance
in the description of boundary value problems (when one speaks of boundary values in parametrized from);
cf. \cite{BHS}.

\section{Construction of the closure of operator range relations}\label{sec5}

Let $A\in\bB(\sE,\sH)$ and $B\in\bB(\sE,\sK)$.
Then the linear relation $L(A,B)$ from $\sH$ to $\sK$  in \eqref{Tabegin}
is an operator range relation.
By Lemma \ref{cloclo} the closure $L(A,B)^{**}$
of $L(A,B)$ is an operator range relation  and
 it will be shown how the closure $L(A,B)^{**}$ can be represented
in terms of the original operators $A$ and $B$.

\vspace{0.2cm}

First return to the treatment involving the pair of bounded operators $A$ and $B$.
 The following construction is inspired by  arguments appearing in measure theory.
Due to the obvious inequalities
 \begin{equation}\label{eqCACB-}
A^*A\le A^*A+B^*B, \quad B^*B\le A^*A+B^*B,
\end{equation}
an application of the Douglas lemma \cite{Doug} shows that there exists a pair of contractions
$C_A \in \bB( \sE, \sH)$ and $C_B \in \bB( \sE, \sK)$,
such that
\begin{equation}\label{eqCACB++}
 A=C_A\, (A^*A+B^*B)^{1/2}, \quad B=C_B\,(A^*A+B^*B)^{1/2},
\end{equation}
or, equivalently
\begin{equation}\label{eqCACB}
 A^*=(A^*A+B^*B)^{1/2} (C_A)^*, \quad B^*=(A^*A+B^*B)^{1/2} (C_B)^*.
\end{equation}
The contractions
$C_A  \in \bB(\sE, \sH)$ and $C_B \in \bB( \sE , \sK)$
are uniquely determined by the conditions that $C_A$ and $C_B$
vanish on $(\ran (A^*A+B^*B)^\half)^\perp=\ker A \cap \ker B$
or, equivalently,
\begin{equation}\label{eqCACBa}
 \ran (C_A)^*\subset \cran (A^*A+B^*B), \quad
 \ran (C_B)^*\subset \cran (A^*A+B^*B),
\end{equation}
and with these conditions one also has as a consequence
\[
 \ker (C_A)^*=\ker A^*, \quad \ker (C_B)^*=\ker B^*.
\]
It follows from \eqref{eqCACB++} that $L(A,B)$
can be written as
\begin{equation}\label{eqCACB1}
 L(A,B)=\{\,\{C_A \, (A^*A+B^*B)^{1/2} f,C_B \,
 (A^*A+B^*B)^{1/2}f\}:\, f\in\sE\,\}.
 \end{equation}
This representation will be called the
\textit{canonical representation} of $L(A,B)$.
Of course the pair $C_A$ and $C_B$ itself induces a linear
relation $L(C_A,C_B)$ in $\sH \times \sK$, in other words,
\begin{equation}\label{eqCACB2-}
L(C_A,C_B)=\{\,\{C_A f,C_B \, f\}:\, f\in \sE\,\}.
\end{equation}

Analogous to  \eqref{column} one may now introduce the column operator
$c(C_A,C_B)$ from $\sE$ to $\sH \times \sK$ by
\begin{equation}\label{column11}
c(C_A, C_B) = \begin{pmatrix} C_A  \\ C_B \end{pmatrix},
\quad \mbox{i.e.,} \quad
 c(C_A, C_B) \varphi= \begin{pmatrix} C_A \varphi \\ C_B \varphi \end{pmatrix},
\quad \varphi \in \sE.
\end{equation}
Then $c(C_A,C_B)$ belongs to $\bB(\sE,\sH\times\sK)$ and
\[
L(C_A,C_B)=\ran c(C_A,C_B);
\]
cf. \eqref{Tab**}.  It follows from \eqref{eqCACB1} that
\begin{equation}\label{rprime}
 L(A,B)=\ran c(A,B) \subset \ran c(C_A,C_B)
 =L(C_A,C_B).
\end{equation}
Note that if $L(A,B)$ is closed, then $L(C_A,C_B)=L(A,B)$
by \eqref{eqCACB1}, \eqref{column1+}, and Lemma \ref{Tclosed0}.

\medskip

The next lemma contains a first step in establishing the connection between
the pairs $A,B$ and $C_A, C_B$.

\begin{lemma}\label{newgri}
Let $A\in\bB(\sE,\sH)$, $B\in\bB(\sE,\sK)$, and
let $C_A$, $C_B$ be uniquely defined by
\eqref{eqCACB++} and \eqref{eqCACBa}.
Then
\begin{equation}\label{conn000}
  \ker C_A \cap \ker C_B =\ker A \cap \ker B,
\end{equation}
and
\begin{equation}\label{conn}
 [(C_A)^*C_A+(C_B)^*C_B]  h
 = \left\{ \begin{array}{ll} h, &  h \in  \cran (A^*A+B^*B),\\
                                       0, &   h \in \ker A \cap \ker B.
             \end{array}
             \right.
\end{equation}
\end{lemma}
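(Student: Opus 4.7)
The plan is to reduce everything to one key algebraic identity obtained by substituting the defining relations \eqref{eqCACB++} and \eqref{eqCACB} into $A^{*}A+B^{*}B$. Writing $T=(A^{*}A+B^{*}B)^{1/2}$ and $S=(C_A)^{*}C_A+(C_B)^{*}C_B$, the substitutions give
\[
 A^{*}A+B^{*}B \;=\; T\,(C_A)^{*}C_A\,T + T\,(C_B)^{*}C_B\,T \;=\; TST,
\]
while the left-hand side equals $T^{2}$. Hence $T(S-I)T=0$, so $(S-I)T$ maps $\sE$ into $\ker T$. This identity is the engine for both claims, and \eqref{conn000} will be derived at the end as a corollary of \eqref{conn}.

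Next I would exploit the uniqueness conditions \eqref{eqCACBa}: since $\ran (C_A)^{*}\subset\cran(A^{*}A+B^{*}B)=\cran T$ and similarly for $C_B$, one has $\ran S\subset\cran T$. Therefore, for every $x\in\sE$, both $Tx$ and $STx$ lie in $\cran T$, so $(S-I)Tx\in\cran T$. Combined with $(S-I)Tx\in\ker T$ from the previous step and $\cran T\cap\ker T=\{0\}$, this forces $STx=Tx$ for every $x$. Since $\ran T$ is dense in $\cran T$ and $S,I$ are bounded, continuity yields $Sh=h$ for all $h\in\cran(A^{*}A+B^{*}B)$. This establishes the first case of \eqref{conn}. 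The second case is immediate: for $h\in\ker A\cap\ker B=\ker(A^{*}A+B^{*}B)=(\cran T)^{\perp}$, the condition $\ran (C_A)^{*}\subset\cran T$ gives $h\perp\ran (C_A)^{*}$, i.e., $C_A h=0$, and likewise $C_B h=0$, so $Sh=0$.

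For \eqref{conn000}, I would use that $S\ge 0$, which gives the elementary identity $\ker S=\ker C_A\cap\ker C_B$. On the other hand, \eqref{conn} says $S$ acts as the identity on $\cran T$ and as zero on $\ker T=\ker A\cap\ker B$, so via the orthogonal decomposition $\sE=\cran T\oplus\ker T$ one reads off $\ker S=\ker T=\ker A\cap\ker B$. Equating the two descriptions of $\ker S$ yields \eqref{conn000}.

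The only subtle step is the passage from $T(S-I)T=0$ to the pointwise equality $S=I$ on $\cran T$; the naive conclusion $(S-I)Tx=0$ would fail without the range constraints in \eqref{eqCACBa}, since a priori $(S-I)Tx$ could be a nontrivial element of $\ker T$. Making crucial use of \eqref{eqCACBa} to ensure $(S-I)Tx\in\cran T$ is what closes the argument.
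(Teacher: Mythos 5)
Your proposal is correct and follows essentially the same route as the paper: both start from the identity $T\,[(C_A)^*C_A+(C_B)^*C_B]\,T=T^2$ with $T=(A^*A+B^*B)^{1/2}$ and use the range constraints \eqref{eqCACBa} to cancel a factor of $T$ and conclude $Sh=h$ on $\cran T$ by density. Your reorganization (proving the second case of \eqref{conn} directly from \eqref{eqCACBa} and then reading off \eqref{conn000} from $\ker S=\ker C_A\cap\ker C_B$) is a minor and arguably cleaner variant of the paper's contradiction argument, and you correctly isolate where \eqref{eqCACBa} is indispensable.
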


\begin{proof}
It follows from  \eqref{eqCACB++}, \eqref{eqCACB}, and \eqref{eqCACBa}  that
\[
(A^*A+B^*B)^{1/2} [(C_A)^* C_A+(C_B)^* C_B](A^*A+B^*B)^{1/2}
=A^*A+B^*B.
\]
 Consequently, thanks to \eqref{eqCACBa},
\[
[(C_A)^* C_A+(C_B)^* C_B](A^*A+B^*B)^{1/2} =(A^*A+B^*B)^{1/2},
\]
which by continuity of $(C_A)^* C_A+(C_B)^* C_B$ implies that
\begin{equation}\label{conn-}
 [(C_A)^*C_A+(C_B)^*C_B]  h =h, \quad h \in  \cran (A^*A+B^*B).
\end{equation}
Hence the first part of \eqref{conn} holds.

Keeping  \eqref{Tabbb} in mind, it is clear that in the present context one also has
\[
 \sE=\cran( (C_A)^* C_A+(C_B)^* C_B ) \oplus (\ker C_A \cap \ker C_B).
\]
Observe that it follows from  \eqref{eqCACBa}  that
\[
\cran( (C_A)^* C_A+(C_B)^* C_B ) \subset \cran (A^*A+B^*B).
\]
or, equivalently,
\[
\ker A \cap \ker B =\ker (A^*A+B^*B) \subset \ker C_A \cap \ker C_B.
\]
Assume that these inclusions are strict. Then there exists a nontrivial element
$h \in \cran (A^*A+B^*B)$
with $h \in \ker C_A \cap \ker C_B$, which contradicts \eqref{conn-}. Thus
\[
\cran( (C_A)^* C_A+(C_B)^* C_B ) = \cran (A^*A+B^*B),
\]
and \eqref{conn000}  follows. Thus also the second part of \eqref{conn} holds.
\end{proof}

Note that the pair $C_A$ and $C_B$ is normalized precisely when it is reduced
or, equivalently, when the pair $A$ and $B$ is reduced.

\medskip

The precise connection between the pairs $A, B$ and $C_A, C_B$ is now established
in terms of  the \emph{polar decomposition} (cf. \cite[Section~VI~2.7]{Kato})
of the column operator $c(A,B)$   in \eqref{column}; cf. \eqref{eqCACB}.

\begin{lemma}\label{polar}
Let $A\in\bB(\sE,\sH)$, $B\in\bB(\sE,\sK)$, and
let $C_A$, $C_B$ be uniquely defined by
\eqref{eqCACB++} and \eqref{eqCACBa}.
Let $c(A,B)$ and $c(C_A,C_B)$ be as defined in  \eqref{column}
and \eqref{column11}, respectively.
Then the identity
 \[
 \begin{pmatrix} A \\ B \end{pmatrix} = \begin{pmatrix} C_A \\ C_B \end{pmatrix}  (A^*A+B^*B)^{1/2},
\]
is the polar decomposition of the column operator $c(A,B)$,
where the column operator $c(C_A,C_B)$ is the  unique  partial isometry with
initial space $\cran (A^*A+B^*B)$ and final space $\cran c(A,B)$.
\end{lemma}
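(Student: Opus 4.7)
The plan is to verify that the stated identity is indeed the polar decomposition by checking, in order, the three defining ingredients: (i) that the right factor is the modulus $|c(A,B)|$, (ii) that the left factor $c(C_A,C_B)$ is a partial isometry with the correct initial space, and (iii) that its final space matches $\cran c(A,B)$. Uniqueness then comes for free from the uniqueness clause in the general polar decomposition theorem.

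First I would compute
\[
 c(A,B)^*c(A,B)=A^*A+B^*B,
\]
as recorded in \eqref{column1}. Hence $|c(A,B)|=(A^*A+B^*B)^{1/2}$, and the identity asserted in the lemma follows immediately from \eqref{eqCACB++} by assembling the two components into the column operator $c(A,B)$.

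Next I would check that $c(C_A,C_B)$ is a partial isometry with initial space $\cran (A^*A+B^*B)$. For this I would compute
\[
 c(C_A,C_B)^*c(C_A,C_B)=(C_A)^*C_A+(C_B)^*C_B,
\]
and invoke \eqref{conn} in Lemma \ref{newgri}, which says precisely that this operator equals the orthogonal projection onto $\cran(A^*A+B^*B)$ (identity on that subspace, zero on $\ker A\cap \ker B$, whose orthogonal complement is $\cran(A^*A+B^*B)$ by \eqref{Tabbb}). Thus $c(C_A,C_B)$ is a partial isometry with the required initial space.

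Finally I would identify the final space. One inclusion is immediate: since $c(A,B)=c(C_A,C_B)(A^*A+B^*B)^{1/2}$, the range of $c(A,B)$ is contained in the range of $c(C_A,C_B)$, hence $\cran c(A,B)\subset \ran c(C_A,C_B)$ (the latter being closed as the isometric image of a closed subspace under a partial isometry). For the reverse inclusion, take $h\in\cran(A^*A+B^*B)$ and approximate it by a sequence $(A^*A+B^*B)^{1/2}f_n$; then $c(C_A,C_B)h=\lim c(C_A,C_B)(A^*A+B^*B)^{1/2}f_n=\lim c(A,B)f_n\in \cran c(A,B)$ by continuity. Combined with the vanishing of $c(C_A,C_B)$ on $\ker A\cap \ker B$, this gives $\ran c(C_A,C_B)=\cran c(A,B)$. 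The displayed factorization with these two factors therefore satisfies the defining properties of the polar decomposition of $c(A,B)$, and by uniqueness it \emph{is} the polar decomposition. The only mildly delicate step is the density/continuity argument in (iii); the rest is a direct bookkeeping exercise based on \eqref{eqCACB++} and \eqref{conn}.
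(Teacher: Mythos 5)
Your proposal is correct and follows essentially the same route as the paper: both identify $|c(A,B)|=(A^*A+B^*B)^{1/2}$ via \eqref{column1}, use Lemma \ref{newgri} to show that $c(C_A,C_B)^*c(C_A,C_B)$ is the orthogonal projection onto $\cran(A^*A+B^*B)$ (hence a partial isometry with that initial space), and then determine the final space by the same density-and-continuity argument mapping $\ran(A^*A+B^*B)^{1/2}$ onto $\ran c(A,B)$. The only cosmetic difference is that you obtain closedness of $\ran c(C_A,C_B)$ directly from the partial-isometry property, whereas the paper cites Lemma \ref{Tclosed0}.
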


\begin{proof}
By Lemma \ref{newgri} $(C_A)^* C_A+(C_B)^* C_B$ is the orthogonal projection from $\sE$ onto
\[
 \ran ((C_A)^* C_A+(C_B)^* C_B)=\cran (A^*A+B^*B).
\]
In particular, $\ran  [(C_A)^*C_A+(C_B)^*C_B]$ is closed and, equivalently,
$\ran c(C_A, C_B)=L(C_A, C_B)$ is closed; cf. Lemma \ref{Tclosed0}.
Then the column operator $c(C_A, C_B)$ is a partial isometry with initial space
$\cran (A^*A+B^*B)$ and final space $\ran c(C_A, C_B)$ (cf. \cite[Appendix~D]{BHS}).
Since $\ran (A^*A+B^*B)^{1/2}$ is dense in $\cran (A^*A+B^*B)$, also its image under
$c(C_A, C_B)$ is dense in $\ran c(C_A, C_B)$. By construction
$c(C_A, C_B)$ maps $\ran (A^*A+B^*B)^{1/2}$ onto $\ran c(A,B)$, cf. \eqref{eqCACB1},
and thus the final space of $c(C_A, C_B)$  is equal to $\cran c(A, B)$.
\end{proof}

Recall that the reduction of $L(A,B)$ in \eqref{uno}
is with respect to the orthogonal decomposition \eqref{Tabbb}
of the space $\sE$:
\begin{equation*}\label{ggrrii}
\sE=\sE_0 \oplus (\ker A \cap \ker B) \quad \mbox{with} \quad \sE_0=\cran (A^*A+B^*B)^{1/2}.
\end{equation*}
Thus  $A$ and $B$ are row operators:
\begin{equation}\label{reduu0}
A=r(A_0, O_{\ker A \, \cap \, \ker B}) \quad \mbox{and} \quad B=r(B_0, O_{\ker A \, \cap \, \ker B}),
\end{equation}
such that $A_0 \in \mathbf{B}(\sE_0, \sH)$ and $B_0 \in \mathbf{B}(\sE_0, \sK)$.
 As in  \eqref{eqCACB++}, there exists a pair of   operators
$C_{A_0} \in \mathbf{B}(\sE_0, \sH)$ and $C_{B_0}  \in \mathbf{B}(\sE_0, \sK)$, such that
\begin{equation}\label{reduu}
 A_0= C_{A_0}  ((A_0)^*A_0+(B_0)^*B_0)^\half, \quad B_0= C_{B_0}  ((A_0)^*A_0+(B_0)^*B_0)^\half,
\end{equation}
and these operators are unique.
 It will be shown that, in fact,  the operators $C_{A_0}$ and  $C_{B_0}$ are the reductions of the pair
$C_A$ and $C_B$ in \eqref{eqCACB++}.

\begin{lemma}\label{rreduxx}
Let $A\in\bB(\sE,\sH)$, $B\in\bB(\sE,\sK)$,
let $C_A$, $C_B$ be uniquely defined by
\eqref{eqCACB++} and \eqref{eqCACBa},
 and let $A_0$ and $B_0$ be the restrictions of $A$ and $B$ as in \eqref{uno}.
Then the operators $C_{A_0}$ and $C_{B_0}$ in \eqref{reduu} are the restrictions of $C_A$ and $C_B$.
In other words:
 \[
L(C_A,C_B) =\{\,\{C_{A_0} f, C_{B_0} \, f\}:\, f\in \sE_0\,\}
 \]
is a  reduced representation of $L(C_A,C_B)$.
\end{lemma}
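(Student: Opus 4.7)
The plan is to unpack the block decomposition of $A$ and $B$ with respect to $\sE=\sE_0\oplus(\ker A\cap\ker B)$, read off the corresponding block decomposition of $C_A$ and $C_B$ from the condition \eqref{eqCACBa}, and then invoke the uniqueness of the Douglas factorization for the reduced pair $A_0,B_0$.

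First, I would observe that by \eqref{eqCACBa} one has $\ran(C_A)^{*}\subset\cran(A^{*}A+B^{*}B)=\sE_0$ and similarly $\ran(C_B)^{*}\subset\sE_0$. This says precisely that $C_A$ and $C_B$ vanish on $\ker A\cap\ker B$, so with respect to $\sE=\sE_0\oplus(\ker A\cap\ker B)$ they can be written as row operators
\[
C_A=r(C_A\uphar\sE_0,\,O_{\ker A\cap\ker B}),\qquad C_B=r(C_B\uphar\sE_0,\,O_{\ker A\cap\ker B}).
\]
Next, from \eqref{duo} the operator $(A^{*}A+B^{*}B)^{1/2}$ is itself block diagonal with entries $((A_0)^{*}A_0+(B_0)^{*}B_0)^{1/2}$ on $\sE_0$ and $0$ on $\ker A\cap\ker B$.

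With these block forms in hand, the factorization $A=C_A(A^{*}A+B^{*}B)^{1/2}$ from \eqref{eqCACB++}, combined with \eqref{reduu0}, reduces on $\sE_0$ to
\[
A_0=(C_A\uphar\sE_0)\,((A_0)^{*}A_0+(B_0)^{*}B_0)^{1/2},
\]
and analogously for $B_0$. To conclude that $C_A\uphar\sE_0=C_{A_0}$ (and $C_B\uphar\sE_0=C_{B_0}$) I would invoke the uniqueness statement underlying the definition of $C_{A_0},C_{B_0}$. This requires verifying the range condition $\ran(C_A\uphar\sE_0)^{*}\subset\cran((A_0)^{*}A_0+(B_0)^{*}B_0)$. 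But $\ker A_0\cap\ker B_0=\{0\}$ by construction of the reduced pair, so $\cran((A_0)^{*}A_0+(B_0)^{*}B_0)=\sE_0$, and the inclusion is automatic since $C_A\uphar\sE_0$ maps into $\sH$ from $\sE_0$. Hence uniqueness yields the desired identification.

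Finally, once $C_A=r(C_{A_0},O)$ and $C_B=r(C_{B_0},O)$ are established, the representation
\[
L(C_A,C_B)=\{\,\{C_Af,\,C_Bf\}:\,f\in\sE\,\}
\]
collapses, because the components of $f$ in $\ker A\cap\ker B$ are annihilated, to
\[
L(C_A,C_B)=\{\,\{C_{A_0}f_0,\,C_{B_0}f_0\}:\,f_0\in\sE_0\,\},
\]
which is a reduced representation since $\ker C_{A_0}\cap\ker C_{B_0}=\{0\}$ (this follows from \eqref{conn000} applied to the reduced pair, or directly from $\cran((C_{A_0})^{*}C_{A_0}+(C_{B_0})^{*}C_{B_0})=\sE_0$). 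The only mild subtlety is the bookkeeping around the equality $\cran((A_0)^{*}A_0+(B_0)^{*}B_0)=\sE_0$ needed to apply the uniqueness of the Douglas factors; everything else is a direct translation of the identities already proved in Lemma~\ref{newgri} to the reduced spaces.
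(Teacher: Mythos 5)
Your proposal is correct and follows essentially the same route as the paper: decompose $A=C_A(A^*A+B^*B)^{1/2}$ blockwise via \eqref{duo}, identify the restrictions of $C_A,C_B$ to $\sE_0$ with the factors in \eqref{reduu} by uniqueness, and then read off the reduced representation of $L(C_A,C_B)$. Your explicit check that the range condition needed for uniqueness of $C_{A_0},C_{B_0}$ is automatic (since $\cran((A_0)^*A_0+(B_0)^*B_0)=\sE_0$) is a small but welcome clarification that the paper leaves implicit.
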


\begin{proof}
 With the restrictions $A_0$ and $B_0$, it follows from \eqref{duo} and \eqref{eqCACB++}   that
\begin{equation}\label{duo+}
\begin{split}
 A&=C_A (A^*A+B^*B)^\half=C_A \begin{pmatrix} ((A_0)^*A_0+(B_0)^*B_0)^\half & 0 \\0 &0\end{pmatrix}, \\
 B&=C_B (A^*A+B^*B)^\half=C_B \begin{pmatrix} ((A_0)^*A_0+(B_0)^*B_0)^\half & 0 \\0 &0\end{pmatrix}.
\end{split}
\end{equation}
It has been shown in \eqref{conn000} that $\sE_0=\ker A \cap \ker B=\ker C_A \cap C_B$.
Now let $\wt C_A \in \mathbf{B}(\sE_0, \sH)$ and $\wt C_B \in \mathbf{B}(\sE_0, \sK)$
be the restrictions of $C_A$ and $C_B$,
so that $C_A=(\wt C_A ,0)$ and $C_B=(\wt C_B ,0)$.
Then with $A=r(A_0 , 0)$ and $B=r(B_0 , 0)$,  it follows
from \eqref{duo+}, that
\[
 A_0=\wt C_A ((A_0)^*A_0+(B_0)^*B_0)^\half, \quad B_0=\wt C_B ((A_0)^*A_0+(B_0)^*B_0)^\half.
\]
A comparison with \eqref{reduu} gives that $\wt C_A=C_{A_0}$ and $\wt C_B=C_{B_0}$.
\end{proof}

\medskip

The description of $L(A,B)^{**}$ and its orthogonal operator part
follows easily from Lemma \ref{polar} and Lemma \ref{*Repress}.
The Moore-Penrose inverse $(C_A)^{(-1)}$ of $C_A$
is an operator from $\ran C_A \subset \sH$ to $(\ker C_A)^\perp \subset \sE$,
which is defined as follows: for $\psi \in \ran C_A$,
there exists a unique $\varphi \in (\dom C_A)^\perp$
such that $C_A \varphi=\psi$, and $(C_A)^{(-1)} \psi := \varphi$; cf. \cite{BHS}.
Note that in the literature one often also extends $A^{(-1)}$
by $A^{(-1)} \psi=0$ for $\psi \in (\ran A)^\perp$.

\begin{theorem}\label{Main}
Let the linear relation $L(A,B)$ be given by \eqref{Tabegin}
and let $C_{A}$ and $C_{B}$ be the uniquely defined operators
satisfying \eqref{eqCACB++} and \eqref{eqCACBa}.
Then the linear relation $L(C_A,C_B)$ is closed and
\begin{equation}\label{Mainx}
L(A,B)^{**}=L(C_A,C_B)=\{\,\{C_A  \varphi,C_B \varphi \}:\,
 \varphi \in \sE \} =C_B(C_A)^{-1}.
\end{equation}
Consequently, the orthogonal operator part of $L(A,B)^{**}$ is given by
\begin{equation}\label{Mainy}
 (L(A,B)^{**})_{\rm s}=\{\,\{C_A  \varphi,C_B \varphi \}:\,
 \varphi \in (\ker C_{A})^{\perp} \}=C_B(C_A)^{(-1)}.
\end{equation}
 \end{theorem}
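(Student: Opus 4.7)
The plan is to assemble three pieces of the preceding machinery: Lemma \ref{newgri}, which shows that $(C_A)^*C_A+(C_B)^*C_B$ is the orthogonal projection onto $\cran(A^*A+B^*B)$; Lemma \ref{polar}, which identifies $c(C_A,C_B)$ as a partial isometry with final space $\cran c(A,B)$; and Lemma \ref{*Repress} together with Lemma \ref{rreduxx}, which handle the orthogonal operator part in the normalized reduced case.

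First I would show that $L(C_A,C_B)$ is closed. The range of $(C_A)^*C_A+(C_B)^*C_B$ equals $\cran(A^*A+B^*B)$, which is closed as the range of an orthogonal projection. Applying the equivalence (i)$\Leftrightarrow$(ii) of Lemma \ref{Tclosed0} to the pair $C_A,C_B$ then delivers the closedness of $L(C_A,C_B)$.

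Next I would read off $L(A,B)^{**}=L(C_A,C_B)$ directly from the polar decomposition. Since $c(C_A,C_B)$ is a partial isometry, its range equals its final space, so
\[
L(C_A,C_B)=\ran c(C_A,C_B)=\cran c(A,B)=L(A,B)^{**}.
\]
The quotient form $C_B(C_A)^{-1}$ then merely restates the set-theoretic definition in \eqref{eqCACB2-} in the product-of-relations notation, in the spirit of the earlier remark that $L(A,B)=BA^{-1}$.

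Finally, for the orthogonal operator part \eqref{Mainy} my strategy is to reduce to a normalized pair. By Lemma \ref{rreduxx} the restrictions $C_{A_0},C_{B_0}$ of $C_A,C_B$ to $\sE_0=\cran(A^*A+B^*B)$ give a reduced representation of $L(C_A,C_B)$, and \eqref{conn} yields $(C_{A_0})^*C_{A_0}+(C_{B_0})^*C_{B_0}=I_{\sE_0}$; hence this reduced pair is normalized. Lemma \ref{*Repress} then gives
\[
\bigl(L(C_A,C_B)\bigr)_{\rm s}=\{\,\{C_{A_0}\varphi_0,C_{B_0}\varphi_0\}:\varphi_0\in(\ker C_{A_0})^\perp\,\}=C_{B_0}(C_{A_0})^{(-1)}.
\]
Using \eqref{conn000} one has $\ker C_A=\ker C_{A_0}\oplus(\ker A\cap\ker B)$, so $(\ker C_A)^\perp$ coincides with $(\ker C_{A_0})^\perp$, and the two formulations in \eqref{Mainy} agree. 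I expect no substantive obstacle; the only delicate point is in this last step, where the passage between the full pair $C_A,C_B$ and its reduced part $C_{A_0},C_{B_0}$ has to be tracked carefully so that the Moore-Penrose inverse $(C_A)^{(-1)}$ and the orthogonal complement $(\ker C_A)^\perp$ are unambiguously identified with their reduced counterparts.
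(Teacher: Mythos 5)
Your proposal is correct and follows essentially the same route as the paper: closedness and the identity $L(A,B)^{**}=L(C_A,C_B)$ are read off from Lemma \ref{polar} exactly as in the text, and the operator part rests on the normalization identity \eqref{conn}. The only cosmetic difference is in \eqref{Mainy}: the paper redoes the orthogonality computation of Lemma \ref{*Repress} directly for the pair $C_A,C_B$ on the decomposition $\sE=(\ker C_A)^{\perp}\oplus\ker C_A$, using that $(\ker C_A)^{\perp}\subset\cran(A^*A+B^*B)$ so that \eqref{conn} applies, whereas you pass to the reduced normalized pair $C_{A_0},C_{B_0}$ via Lemma \ref{rreduxx} and cite Lemma \ref{*Repress}; your identification $(\ker C_A)^{\perp}=(\ker C_{A_0})^{\perp}$ inside $\sE_0$ is the right bookkeeping and the two arguments are interchangeable.
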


\begin{proof}
 As shown in Lemma \ref{polar} one has $\ran c(C_A,C_B)=\cran c(A,B)$.
Hence, it follows from \eqref{rprime}, after taking closures, that
\[
 L(A,B)^{**}=\cran c(A,B) = \ran c(C_A,C_B)=L(C_A,C_B),
\]
and it is clear that $L(C_A,C_B)=C_B(C_A)^{-1}$. Thus \eqref{Mainx} has been shown.

 In order to show \eqref{Mainy}, observe that $C_A \in \bB( \sE, \sH)$ gives
the orthogonal decomposition
\[
  \sE=(\ker C_{A})^{\perp} \oplus \ker C_{A}.
\]
Hence, \eqref{Mainx} leads to the alternative representation
\begin{equation}\label{aabb1+}
 L(C_A,C_B)
 =\{\,\{C_A  \varphi,C_B \varphi+C_{B}\psi\}:\,
 \varphi \in (\ker C_{A})^{\perp},\, \psi\in \ker C_A\,\}.
\end{equation}
It follows from \eqref{eqCACBa} that
\[
 (\ker C_A)^\perp=\cran (C_A)^*\subset \cran (A^*A+B^*B).
\]
Therefore,  for all
$\varphi \in (\ker C_{A})^{\perp}$ and $\psi \in \ker C_A$ one has by \eqref{conn}
\[
\begin{split}
 (C_B \varphi, C_B \psi)&=( (C_B)^*C_B \varphi, \psi)
 =( (I-(C_A)^*C_A) \varphi, \psi) \\
& =(\varphi,\psi)- (C_A \varphi, C_A \psi)=0.
\end{split}
\]
Consequently, the range decomposition in the representation \eqref{aabb1+} is orthogonal.
Furthermore, recall that
\[
 \mul L(C_A,C_B)=C_B( \ker C_A),
\]
cf. \eqref{Tabb}, where
the right-hand side is closed as the left-hand side is closed.
Thus the orthogonal operator part of $L(C_A,C_B)$ has the representation
\[
 L(C_A,C_B)_{\rm s}=\{\,\{C_A  \varphi,C_B \varphi \}:\,
 \varphi \in (\ker C_{A})^{\perp} \},
\]
which proves \eqref{Mainy}.
\end{proof}

Finally, the representation of the adjoint relation $L(C_A,C_B)^*$
will be considered.
Recall that the identity \eqref{Tab*} gives the following representation:
\[
  L(C_A,C_B)^*=\{\,\{k,h\}\in \sK\times\sH:\, (C_B)^*k=(C_A)^*h\,\}.
\]
An application of  Lemma \ref{Tclosed0Q} and Lemma \ref{newgri}
gives the following result; cf. Corollary \ref{*Repres}.

\begin{corollary}\label{*Represcc}
Let the linear relation $L(A,B)$ be given by \eqref{Tabegin}
and let $C_{A}$ and $C_{B}$ be the uniquely defined operators
satisfying \eqref{eqCACB++} and \eqref{eqCACBa}.
 Then  the adjoint relation $L(C_A,C_B)^*$ is given by
 \[
 \bigl\{ \{  (I-C_B (C_B)^*)\varphi +C_B (C_A)^* \psi,
\, C_A (C_B)^* \varphi +(I-C_A (C_A)^*) \psi \}:\,
\varphi \in \sK, \, \psi \in \sH \bigr\}.
 \]
 \end{corollary}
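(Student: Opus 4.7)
The plan is to recognize that the pair $(C_A, C_B)$ satisfies precisely the hypothesis \eqref{isomTabQ} of Lemma \ref{Tclosed0Q}, so that Corollary \ref{*Repres} can be applied directly with $(A,B)$ replaced by $(C_A, C_B)$.

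First I would verify the key structural identity. By Lemma \ref{newgri} together with the orthogonal decomposition $\sE = \cran (A^*A+B^*B) \oplus (\ker A \cap \ker B)$ from \eqref{Tabbb} and the identification $\ker A \cap \ker B = \ker C_A \cap \ker C_B$ in \eqref{conn000}, the operator $(C_A)^*C_A + (C_B)^*C_B$ acts as the identity on $\cran(A^*A+B^*B)$ and as zero on the complementary subspace. Therefore it is precisely the orthogonal projection $Q$ onto $\cran(A^*A+B^*B)$:
\[
 (C_A)^*C_A + (C_B)^*C_B = Q.
\]
This is the hypothesis \eqref{isomTabQ} required by Lemma \ref{Tclosed0Q} applied to the pair $(C_A, C_B)$.

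Next I would invoke Corollary \ref{*Repres} with $(A,B)$ replaced by $(C_A, C_B)$. This corollary states that whenever \eqref{isomTabQ} holds for a pair, the adjoint of the associated operator range relation admits the explicit parametrization by $\varphi \in \sK$ and $\psi \in \sH$ of the required form. Substituting $C_A$ for $A$ and $C_B$ for $B$ in the formula of Corollary \ref{*Repres} yields directly
\[
 L(C_A,C_B)^* = \bigl\{ \{ (I - C_B(C_B)^*)\varphi + C_B(C_A)^*\psi,\ C_A(C_B)^*\varphi + (I - C_A(C_A)^*)\psi \} : \varphi \in \sK,\, \psi \in \sH \bigr\},
\]
which is the desired identity.

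I expect no serious obstacle here: the work has already been done in Lemma \ref{newgri}, which is tailored precisely to show that $(C_A, C_B)$ is a projection-normalized pair in the sense of \eqref{isomTabQ}. The only point that deserves a brief remark is that Corollary \ref{*Repres} was stated for a generic pair satisfying \eqref{isomTabQ}, so its proof (which goes through Lemma \ref{Tclosed0Q} and the projection formula \eqref{rutte1}) applies verbatim to $(C_A, C_B)$; no additional hypothesis such as full normalization \eqref{isomTab} is needed.
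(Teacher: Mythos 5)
Your proposal is correct and follows essentially the same route as the paper, which likewise derives the result by combining Lemma \ref{newgri} (showing that $(C_A)^*C_A+(C_B)^*C_B$ is the orthogonal projection onto $\cran(A^*A+B^*B)$, i.e.\ that the pair $(C_A,C_B)$ satisfies \eqref{isomTabQ}) with Lemma \ref{Tclosed0Q} and Corollary \ref{*Repres}. Your closing remark that only the projection condition \eqref{isomTabQ}, and not the full normalization \eqref{isomTab}, is needed is also consistent with the paper's argument.
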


 In order to rewrite the result  in Corollary \ref{*Represcc},
consider the polar decomposition
of the contraction $C_A  \in \bB(\sE, \sH)$:
\[
C_A=W_A\big((C_A)^*C_A\big)^{1/2},
\]
where $W_A$ is the unique partial isometry from $\sH_{A,B}$ to $\sH$
with initial space and final space given by
\[
 (\ker W_A)^\perp
 =\cran \big((C_{A})^{*}C_{A}\big)^{1/2}
 \quad \mbox{and} \quad \ran W_A=\cran C_A,
\]
respectively. Under the assumption that the pair $A$ and $B$ is reduced
it follows that the pair $C_A$ and $C_B$ is normalized, see Lemma \ref{newgri}.
Hence the argument preceding Corollary \ref{starstar} now gives the following result.

\begin{corollary}
Let the linear relation $L(A,B)$ be given by \eqref{Tabegin}
and let $C_{A}$ and $C_{B}$ be the uniquely defined operators
satisfying \eqref{eqCACB++} and \eqref{eqCACBa}.
Assume that the pair $A$ and $B$ is reduced.
Then the adjoint relation $L(C_A,C_B)^*$ is given by
\[
 L(C_A,C_B)^*
 =\{\{\big(I-C_B(C_B)^*\big)^{1/2}h,W_A (C_B)^*h+ k \}:\, h\in \sK, \, k \in\ker (C_A)^* \},
\]
where the range decomposition is orthogonal.
\end{corollary}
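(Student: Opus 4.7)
The plan is to reduce this corollary to the already-established Corollary~\ref{starstar} by observing that under the reducedness hypothesis, the auxiliary pair $(C_A, C_B)$ is itself a normalized pair, and then applying that corollary verbatim with $(A,B)$ replaced by $(C_A, C_B)$ and the partial isometry $V_A$ replaced by $W_A$ (the partial isometry in the polar decomposition of $C_A$, introduced in the paragraph just before the statement being proved).

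The bridge is Lemma~\ref{newgri}. Reducedness of the pair $(A,B)$ means $\ker A \cap \ker B = \{0\}$, equivalently $\cran(A^*A+B^*B) = \sE$, so the identity \eqref{conn} specializes to
\[
(C_A)^*C_A + (C_B)^*C_B = I_{\sE}.
\]
Thus $(C_A, C_B)$ satisfies the normalization condition \eqref{isomTab}, so in particular $L(C_A,C_B)$ is closed. The polar-decomposition computation displayed immediately before Corollary~\ref{starstar} uses only this normalization identity together with the standard commutation relation $C_B(I-(C_B)^*C_B)^{1/2} = (I-C_B(C_B)^*)^{1/2}C_B$ and the polar decomposition $C_A = W_A((C_A)^*C_A)^{1/2}$; all of these are valid by the generic spectral calculus for the bounded selfadjoint contractions $(C_A)^*C_A$ and $C_B(C_B)^*$. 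Running that argument with $(C_A, C_B)$ in place of $(A,B)$ therefore produces
\[
L(C_A,C_B)^* = \{\,\{(I-C_B(C_B)^*)^{1/2}k,\, W_A (C_B)^*k + P_{\ker(C_A)^*}h\}:\, h\in\sH,\, k\in\sK\,\}
\]
with orthogonal range decomposition.

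To match the formula in the statement, I would relabel the variable $k \in \sK$ as $h$ and observe that as $h$ ranges over $\sH$ the vector $P_{\ker(C_A)^*}h$ sweeps out precisely the closed subspace $\ker(C_A)^*$; replacing this parametrization by a free variable $k \in \ker(C_A)^*$ yields the stated expression. The orthogonality of the range decomposition is preserved, since the set swept out in the second component is unchanged.

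The main obstacle, if any, is purely bookkeeping: one must check that no hypothesis beyond normalization actually enters the polar-decomposition manipulations preceding Corollary~\ref{starstar}. Since those manipulations are a direct combination of \eqref{isomTab} for the relevant pair with the universally valid polar decomposition and spectral commutation identities, the transfer from $(A,B)$ to $(C_A, C_B)$ is immediate, and the proof reduces to invoking Lemma~\ref{newgri} and Corollary~\ref{starstar}.
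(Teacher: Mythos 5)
Your proposal is correct and follows essentially the same route as the paper: the paper likewise invokes Lemma~\ref{newgri} to conclude that reducedness of $(A,B)$ makes $(C_A,C_B)$ a normalized pair, and then runs the polar-decomposition argument preceding Corollary~\ref{starstar} with $(C_A,C_B)$ in place of $(A,B)$ and $W_A$ in place of $V_A$. Your explicit check that replacing the term $P_{\ker (C_A)^*}h$, $h\in\sH$, by a free parameter $k\in\ker (C_A)^*$ is only a reparametrization is a detail the paper leaves implicit, but it is exactly the right bookkeeping.
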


\section{Regular and singular operator range relations}\label{sec6}

Let $\sE$, $\sH$, and $\sK$ be Hilbert spaces
and consider the pair of bounded linear operators $A\in\bB(\sE,\sH)$ and
$B\in\bB(\sE,\sK)$.
Let the operator range relation $L(A,B)$
be given by \eqref{Tabegin}.
Criteria will be given for the regularity and singularity of $L(A,B)$
in terms of $A~$ and $B$, and in terms of $C_A$ and $C_B$ in \eqref{eqCACB++}.

\medskip

First the regularity and the singularity of
$L(A,B)$ will be expressed in terms of the pair $A$ and $B$; see \cite{HSS2018}
for some further equivalent statements which hold for general linear relations.
The regularity of $L(A,B)$ is described by the following lemma.

\begin{lemma}\label{TAB}
Let the linear relation $L(A,B)$ and $\sD(A,B)$ be given
by \eqref{Tabegin} and \eqref{RAB}, respectively.
Then the following statements are equivalent:
\begin{enumerate}\def\labelenumi{\rm(\roman{enumi})}
\item $L(A,B)$ is regular;
 \item the set $\sD(A,B)$
 is dense in $\sK$.
\end{enumerate}
Moreover, the following statements are equivalent:
\begin{enumerate}\def\labelenumi{\rm(\roman{enumi})}\setcounter{enumi}{2}
\item $L(A,B)$ is a bounded operator;
 \item  $\sD(A,B)=\sK$, i.e., $\ran B^*\subset \ran A^*$.
\end{enumerate}
Finally, the following statements are equivalent:
\begin{enumerate}\def\labelenumi{\rm(\roman{enumi})}\setcounter{enumi}{4}
\item $L(A,B) \in \bB(\sH,\sK)$.
 \item  $\ran B^*\subset \ran A^*$ and $\ran A =\sH$.
\end{enumerate}
\end{lemma}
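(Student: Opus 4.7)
The plan is to prove the three pairs of equivalences separately, by converting each statement into a concrete algebraic condition on $A$ and $B$ using the identities \eqref{Tabb} and \eqref{tabst1} together with the Douglas lemma. For (i) $\Leftrightarrow$ (ii), recall that $L(A,B)$ is regular precisely when its closure $L(A,B)^{**}$ is single-valued, i.e., $\mul L(A,B)^{**}=\{0\}$. The first identity in \eqref{tabst1} identifies this multi-valued part as $\sD(A,B)^{\perp}$, so the condition collapses to $\sD(A,B)$ being dense in $\sK$, and (i) $\Leftrightarrow$ (ii) is immediate.

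For (iii) $\Leftrightarrow$ (iv), the forward direction starts from the hypothesis that $L(A,B)$ is a (single-valued) bounded operator: then $\mul L(A,B)=B(\ker A)=\{0\}$ by \eqref{Tabb}, forcing $\ker A \subset \ker B$, so the assignment $Af\mapsto Bf$ on $\ran A$ is well-defined and, by the boundedness hypothesis, continuous. Extending it by continuity to $\cran A$ and by zero on $(\cran A)^{\perp}$ produces an operator $C\in\bB(\sH,\sK)$ with $CA=B$, whence $B^{*}=A^{*}C^{*}$ and $\ran B^{*}\subset \ran A^{*}$. Conversely, applying the Douglas lemma to the inclusion $\ran B^{*}\subset \ran A^{*}$ supplies some $C\in\bB(\sH,\sK)$ with $B=CA$; then
\[
L(A,B)=\{\,\{Af,CAf\}:\,f\in\sE\,\}
\]
is transparently the restriction of the bounded operator $C$ to the linear manifold $\ran A$, hence a bounded single-valued operator.

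Finally, (v) $\Leftrightarrow$ (vi) is a matter of bookkeeping: membership $L(A,B)\in\bB(\sH,\sK)$ adds to (iii) the requirement $\dom L(A,B)=\sH$, and this reads $\ran A=\sH$ by \eqref{Tabb}. Combining with the already-proved (iii) $\Leftrightarrow$ (iv) yields (v) $\Leftrightarrow$ (vi). I expect no substantive obstacle: the only point that requires a little care is the density-plus-zero-extension construction of $C$ in the forward direction of (iii) $\Rightarrow$ (iv), needed because $\ran A$ is in general not closed, but this is a standard argument.
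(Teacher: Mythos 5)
Your proof is correct and follows essentially the same route as the paper: the first equivalence via the identity $\mul L(A,B)^{**}=\sD(A,B)^{\perp}$ from \eqref{tabst1}, the second via the Douglas lemma, and the third by adjoining the condition $\dom L(A,B)=\ran A=\sH$. The only cosmetic difference is that in (iii) $\Leftrightarrow$ (iv) you invoke the factorization clause of the Douglas lemma ($B=CA$ with $C\in\bB(\sH,\sK)$, obtained in the forward direction by your continuous-extension construction), whereas the paper uses the majorization clause ($\|Bf\|\le c\|Af\|$ if and only if $B^*B\le c^2A^*A$ if and only if $\ran B^*\subset\ran A^*$); these are interchangeable parts of the same lemma.
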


\begin{proof}
(i) $\Leftrightarrow$ (ii)
Recall that $L(A,B)$ is regular if and only if
$\mul L(A,B)^{**}=\{0\}$.  It follows from \eqref{tabst1}
that this is equivalent to $\sD(A,B)$ being dense in $\sK$; see \eqref{tabst}.
The other description for $\sD(A,B)$ is obtained from \eqref{RAB}.

(iii) $\Leftrightarrow$ (iv)
The relation $L(A,B)$ is a bounded operator precisely
if there exists $c \ge 0$ such that
\begin{equation}\label{Tabopp}
 \|Bf\| \le c \|Af\|, \quad f \in \sE,
\end{equation}
or, equivalently, if $B^{*}B \le c^2 A^{*}A$.
By the Douglas lemma \cite{Doug} this is equivalent to $\ran B^*\subset \ran A^*$.
This last inclusion is the same as saying $\dom L(A,B)^*=\sD(A,B)=\sK$;
see \eqref{RAB} and \eqref{tabst}.

(v) $\Leftrightarrow$ (vi) This is now clear.
\end{proof}

Likewise, the singularity of $L(A,B)$ can be expressed
in various equivalent useful ways.

\begin{lemma}\label{labsing}
Let the linear relation $L(A,B)$ be given by \eqref{Tabegin}.
Then the following statements are equivalent:
\begin{enumerate}\def\labelenumi{\rm(\roman{enumi})}
\item $L(A,B)$ is singular;
\item $L(A,B)^{**}=\cran A \times \cran B$;
 \item $\sD(A,B) \subset \ker B^*$;
 \item $\sR(A,B) \subset \ker A^*$;
\item $\ran A^*\cap\ran B^*=\{0\}$;
\item $L(A,B)^*=\ker B^* \times \ker A^*$.
\end{enumerate}
\end{lemma}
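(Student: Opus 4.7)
The six conditions split into three tiers: (iii), (iv), (v) are elementary reformulations of each other; (i) and (ii) describe the product structure of the closure $L(A,B)^{**}$; and (vi) is the dual statement for the adjoint $L(A,B)^{*}$. The bridge between the tiers is provided by the formulas \eqref{tabst}, \eqref{tabst+}, and \eqref{tabst1} already established for the domain, range, kernel and multivalued parts of $L(A,B)^{*}$ and $L(A,B)^{**}$.

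I would first handle (iii) $\Leftrightarrow$ (iv) $\Leftrightarrow$ (v) at a purely set-theoretic level. The reverse inclusions $\ker B^* \subset \sD(A,B)$ and $\ker A^* \subset \sR(A,B)$ hold automatically, so the inclusions in (iii) and (iv) are in fact equalities. If $k \in \sD(A,B)$, then by the definition in \eqref{RAB} the element $B^*k$ already lies in $\ran A^* \cap \ran B^*$; hence $\sD(A,B) \subset \ker B^*$ precisely when every such $B^*k$ vanishes, which is exactly (v). The implication (iv) $\Leftrightarrow$ (v) is entirely symmetric.

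Next I would prove (i) $\Leftrightarrow$ (ii) $\Leftrightarrow$ (iii). From \eqref{tabst+} together with the general identities $\cran T = (\ker T^*)^\perp$ and $\cdom T = (\mul T^*)^\perp$ applied to $T = L(A,B)^{**}$, one obtains $\cdom L(A,B)^{**} = \cran A$ and $\cran L(A,B)^{**} = \cran B$. If (i) holds, then $L(A,B)^{**} = M \times N$ for closed subspaces, which forces $\mul L(A,B)^{**} = \cran L(A,B)^{**} = \cran B$; by \eqref{tabst1} this reads $\sD(A,B)^\perp = \cran B$, i.e., $\sD(A,B) = \ker B^*$, giving (iii). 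Conversely, (iii), together with its equivalent (iv), combined with \eqref{tabst1} yields $\mul L(A,B)^{**} = \cran B$ and $\ker L(A,B)^{**} = \cran A$. Since $\ker L(A,B)^{**} \subset \dom L(A,B)^{**} \subset \cdom L(A,B)^{**} = \cran A$, the domain is closed and equals $\cran A$; analogously the range equals $\cran B$. Combining this with the trivial inclusion
\[
 \ker L(A,B)^{**} \times \mul L(A,B)^{**} \subset L(A,B)^{**},
\]
which follows by linearity of the closure, one concludes (ii); and (ii) $\Rightarrow$ (i) is immediate from the definition of singularity.

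Finally, for (i) $\Leftrightarrow$ (vi) I would invoke the adjoint duality $(M \times N)^* = N^\perp \times M^\perp$, which is a direct consequence of the formula $T^* = JT^\perp$ recorded in \eqref{adjo}. Applied to (ii) this yields $L(A,B)^* = (L(A,B)^{**})^* = (\cran B)^\perp \times (\cran A)^\perp = \ker B^* \times \ker A^*$, which is (vi); a second application recovers the converse. The main bookkeeping hazard throughout is to keep the roles of $\dom/\ker$ versus $\ran/\mul$ straight when passing to adjoints, but the tables \eqref{tabst}--\eqref{tabst1} render this mechanical once they are cited with the correct orientation.
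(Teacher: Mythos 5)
Your proof is correct, but it takes a more self-contained route than the paper. The paper disposes of almost the entire lemma by citing \cite[Proposition 2.8]{HSS2018} together with \eqref{tabst} and \eqref{tabst+}, and then only writes out the single equivalence (iii) $\Leftrightarrow$ (v) (by essentially the same two-line argument you give: an element of $\ran A^*\cap\ran B^*$ is $B^*h$ for some $h\in\sD(A,B)$, hence vanishes). You instead derive all six equivalences from first principles: the automatic inclusions $\ker B^*\subset\sD(A,B)$ and $\ker A^*\subset\sR(A,B)$, the identities $\cdom L(A,B)^{**}=\cran A$ and $\cran L(A,B)^{**}=\cran B$ obtained from \eqref{tabst+} via $\cran T=(\ker T^*)^\perp$ and $\cdom T=(\mul T^*)^\perp$, the inclusion $\ker T\times\mul T\subset T$ for a linear relation, and the duality $(M\times N)^*=N^\perp\times M^\perp$ from \eqref{adjo}. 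Each of these steps is sound; in particular the passage from (iii) and (iv) to $\mul L(A,B)^{**}=\cran B$ and $\ker L(A,B)^{**}=\cran A$ via \eqref{tabst1}, and the sandwich $\ker\subset\dom\subset\cdom$ forcing $\dom L(A,B)^{**}=\cran A$, are exactly what is needed to get the product representation in (ii). What your version buys is independence from the external reference \cite{HSS2018}, at the cost of length; the paper's version buys brevity at the cost of making the reader chase the cited proposition. Both are legitimate, and your (iii) $\Leftrightarrow$ (v) argument coincides with the one the paper actually writes out.
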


\begin{proof}
This is a straightforward application of \cite[Proposition 2.8]{HSS2018},
together with \eqref{tabst} and \eqref{tabst+}.
 It suffices to prove (iii) $\Leftrightarrow$ (v).

(iii) $\Rightarrow$ (v) Let  $\ell \in \ran A^*\cap\ran B^*$.
Then $\ell \in \ran A^*$ and $\ell=B^{*}h$ for some $h \in \sH$.
Thus $h \in \sD(A,B)$ and by assumption one sees that $h \in \ker B^*$.
Thus it follows that $\ell =0$, and hence (v) holds.

(v) $\Rightarrow$ (iii) This implication is trivial.
\end{proof}

Next the regularity and the singularity of
$L(A,B)$ will be expressed in terms of the pair
 $C_A$ and $C_B$ in \eqref{eqCACB++}.
Recall from Theorem \ref{Main} that $L(A,B)^{**}=L(C_A,C_B)$.
Thus the following characterization of the regularity of $L(A,B)$ is clear.

\begin{lemma}\label{maincor}
Let the linear relation $L(A,B)$ be given by \eqref{Tabegin}
and let $C_{A}$ and $C_{B}$ be the uniquely defined operators
satisfying \eqref{eqCACB++} and \eqref{eqCACBa}.
Then the following statements are equivalent:
\begin{enumerate}\def\labelenumi{\rm(\roman{enumi})}
\item $L(A,B)$ is regular;
\item $L(C_{A},C_{B})$ is an operator;
\item $\ker C_A \subset \ker C_B$.
\end{enumerate}
\end{lemma}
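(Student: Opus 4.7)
The plan is to reduce each equivalence directly to results already established, so that no new computation is required. The key ingredient is Theorem \ref{Main}, which identifies $L(A,B)^{**}$ with $L(C_A,C_B)$ and shows that $L(C_A,C_B)$ is closed.

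For the equivalence (i) $\Leftrightarrow$ (ii): by definition $L(A,B)$ is regular precisely when its closure $L(A,B)^{**}$ is (graph of) an operator, i.e.\ $\mul L(A,B)^{**}=\{0\}$. Since Theorem \ref{Main} gives $L(A,B)^{**}=L(C_A,C_B)$, regularity of $L(A,B)$ is equivalent to $L(C_A,C_B)$ being an operator. Thus (i) and (ii) express the same condition.

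For the equivalence (ii) $\Leftrightarrow$ (iii): by the general identity \eqref{Tabb} applied to the pair $(C_A,C_B)$ one has
\[
 \mul L(C_A,C_B)=C_B(\ker C_A).
\]
Hence $L(C_A,C_B)$ is an operator if and only if $C_B(\ker C_A)=\{0\}$, which is exactly the inclusion $\ker C_A\subset \ker C_B$. This disposes of the remaining equivalence.

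There is no real obstacle, since both steps are immediate from already proved facts; the main point worth emphasizing in the write-up is simply invoking Theorem \ref{Main} for the first equivalence and formula \eqref{Tabb} (specialized to $C_A,C_B$) for the second.
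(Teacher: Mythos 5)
Your proof is correct and follows exactly the route the paper intends: the paper states this lemma without proof, remarking only that it is clear from Theorem \ref{Main} (which gives $L(A,B)^{**}=L(C_A,C_B)$), and your two reductions --- regularity as $\mul L(A,B)^{**}=\{0\}$, and $\mul L(C_A,C_B)=C_B(\ker C_A)$ via \eqref{Tabb} --- are precisely the facts being invoked there.
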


Likewise, the singularity of $L(A,B)$ can be expressed
as follows.

\begin{corollary}
Let the linear relation $L(A,B)$ be given by \eqref{Tabegin}
and let $C_{A}$ and $C_{B}$ be the uniquely defined operators
satisfying \eqref{eqCACB++} and \eqref{eqCACBa}.
Then the following statements are equivalent:
\begin{enumerate}\def\labelenumi{\rm(\roman{enumi})}
\item $L(A,B)$ is singular;
\item  $\ker C_A + \ker C_B=\sE$.
\end{enumerate}
\end{corollary}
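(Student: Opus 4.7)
\medskip

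The plan is to leverage Theorem~\ref{Main}, which identifies the closure $L(A,B)^{**}$ with the operator range relation $L(C_A,C_B)$, together with the characterization of singularity from Lemma~\ref{labsing}(ii): the relation $L(A,B)$ is singular precisely when
\[
 L(A,B)^{**}=\cran A \times \cran B.
\]
Thus the equivalence to be proved reduces to showing that $L(C_A,C_B)$ decomposes as a Cartesian product of (closed) subspaces if and only if $\ker C_A + \ker C_B = \sE$. Since $L(C_A,C_B)$ is automatically closed by Theorem~\ref{Main}, the two factors in any such product decomposition are automatically closed, and the factors must be $\dom L(C_A,C_B)=\ran C_A$ and $\ran L(C_A,C_B)=\ran C_B$.

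The heart of the argument is therefore the elementary equivalence
\[
 L(C_A,C_B)=\ran C_A \times \ran C_B
 \,\,\Longleftrightarrow\,\,
 \ker C_A + \ker C_B = \sE.
\]
For the forward direction, given any $e_1,e_2\in\sE$, the pair $\{C_A e_1,C_B e_2\}$ lies in $\ran C_A \times \ran C_B$, hence in $L(C_A,C_B)$; this yields some $e\in\sE$ with $C_A e=C_A e_1$ and $C_B e=C_B e_2$. Writing $f=e-e_1$, one has $f\in\ker C_A$ and $f-(e_2-e_1)\in\ker C_B$, so $e_2-e_1\in\ker C_A+\ker C_B$. Since $e_1,e_2$ are arbitrary, $\ker C_A+\ker C_B=\sE$. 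The reverse direction reverses this construction: given $e_1,e_2$, write $e_2-e_1=u+v$ with $u\in\ker C_A$, $v\in\ker C_B$, and set $e=e_1+u=e_2-v$; then $\{C_A e_1, C_B e_2\}=\{C_A e, C_B e\}\in L(C_A,C_B)$, showing that every point of $\ran C_A\times \ran C_B$ lies in $L(C_A,C_B)$, while the reverse inclusion is trivial.

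There is no real obstacle here; the only point to check carefully is that the product structure forces equality on the full spaces $\ran C_A$ and $\ran C_B$ (not merely dense subsets), which is why the formulation uses the algebraic sum $\ker C_A+\ker C_B=\sE$ rather than a density condition. This algebraic identity is the precise counterpart, via the polar decomposition in Lemma~\ref{polar}, of the condition $\ran A^* \cap \ran B^*=\{0\}$ from Lemma~\ref{labsing}(v), and one could alternatively derive the result from that characterization by translating through the identities $A^*=(A^*A+B^*B)^{1/2}(C_A)^*$ and $B^*=(A^*A+B^*B)^{1/2}(C_B)^*$ in \eqref{eqCACB} and using that $(A^*A+B^*B)^{1/2}$ is injective on $\sE_0=\cran (A^*A+B^*B)$.
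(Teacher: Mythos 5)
Your proposal is correct and follows essentially the same route as the paper: both reduce, via Theorem~\ref{Main} and Lemma~\ref{labsing}, to the elementary equivalence $L(C_A,C_B)=\ran C_A\times\ran C_B \Leftrightarrow \ker C_A+\ker C_B=\sE$, and then verify both implications by the same direct kernel manipulations (the paper decomposes $e=e_1+e_2+e_0$ directly, while you work with the difference $e_2-e_1$, but this is only a cosmetic difference).
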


\begin{proof}
 Due to $L(A,B)^{**}=L(C_A,C_B)$ (see Theorem \ref{Main}),
it follows from  Lemma \ref{labsing} that $L(A,B)$ is singular
if and only if $L(C_A,C_B)=\ran C_A\times \ran C_B$,
or, equivalently,
precisely when
\begin{equation}\label{ranSing}
  \ran c(C_A,C_B)
 =\ran C_A\times \ran C_B.
\end{equation}

(i) $\Rightarrow$ (ii) Assume that $L(A,B)$ is singular. Then, thanks to \eqref{ranSing},
 for every $e \in \sE$ there exist $e_1, e_2 \in \sE$ such that
\[
 c(C_A,C_B) e=\begin{pmatrix} h_1 \\ h_2 \end{pmatrix},
 \quad
 c(C_A,C_B) e_1=\begin{pmatrix} h_1 \\ 0 \end{pmatrix},
\quad
c(C_A,C_B) e_2=\begin{pmatrix} 0 \\ h_2 \end{pmatrix}.
\]
Then, clearly, $e_1\in\ker C_B$, $e_2\in\ker C_A$, and
\[
e_0=e-e_1-e_2\in \ker c(C_A,C_B)=\ker C_A\cap \ker C_B.
\]
This shows that $e \in \ker C_A+\ker C_B$. Therefore, $\sE=\ker C_A+\ker C_B$.

(ii) $\Rightarrow$ (i)
Assume that  $\sE=\ker C_A+\ker C_B$.  Let $a=a_1+a_2$ and $b=b_1+b_2$ with
$a_1,b_1 \in \ker C_B$ and $a_2,b_2 \in \ker C_A$. Then the equality
\[
 \begin{pmatrix} C_A a \\ C_B b \end{pmatrix} = \begin{pmatrix} C_A a_1 \\ C_B b_2 \end{pmatrix}
 = \begin{pmatrix} C_A (a_1+b_2) \\ C_B (a_1+b_2) \end{pmatrix},
\]
shows that $\ran C_A\times \ran C_B  \subset \ran c(C_A,C_B)$.
Hence, \eqref{ranSing} is satisfied.
\end{proof}

\section{Classification of pairs of bounded linear operators}\label{sec7}

Let $\sE$, $\sH$, and $\sK$ be Hilbert spaces and let $A\in\bB(\sE,\sH)$
and $B\in\bB(\sE,\sK)$ be a pair of bounded linear operators.
The characterizations of the operator range relation $L(A,B)$  from \eqref{Tabegin}
will now be augmented by further characterizations in terms of $A$ and $B$, that
are influenced by similar observations in measure theory.

\begin{definition}\label{D}
Let $\sE$, $\sH$, and $\sK$ be Hilbert spaces and let $A\in\bB(\sE,\sH)$
and $B\in\bB(\sE,\sK)$. Then the operator $B$ is said to be \textit{dominated}
by $A$, denoted by $B \prec A$, if there exists some $c>0$ such that
\[
 \|B f\| \leq c \|Af\| \quad \mbox{for all} \quad f \in \sE.
\]
\end{definition}

By the Douglas lemma this definition is equivalent to the factorization
$B=CA$ where $C \in \bB(\sH, \sK)$;
 the operator $C$ is uniquely determined when $\ran C^* \subset \cran A$, in
  which case $\ker C^* =\ker B^*$.
Note that in the present  context Definition \ref{D} agrees
with the definition of domination in \cite{HSn2015}.
For the following, it is useful to recall from the Douglas lemma that $B$ is dominated
by $A$ if and only if $\ran B^* \subset \ran A^*$, i.e., $\sD(A,B)=\sK$.

\medskip

The following simple result is immediate  from Lemma~\ref{TAB}.

\begin{lemma}\label{ABTT}
 Let $A\in\bB(\sE,\sH)$ and $B\in\bB(\sE,\sK)$ and
let the relation $L(A,B)$ be defined by \eqref{Tabegin}.
Then the following statements are equivalent:
\begin{enumerate}\def\labelenumi{\rm(\roman{enumi})}
\item $B$ is dominated by $A$;
\item $\sD(A,B)=\sK$;
\item $L(A,B)$ is a bounded operator.
\end{enumerate}
\end{lemma}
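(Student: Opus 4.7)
The plan is to observe that Lemma~\ref{TAB}(iii)--(iv) already supplies the equivalence (ii)~$\Leftrightarrow$~(iii), so the only new content is the equivalence of (i) with either of the other two conditions. I would therefore focus on linking the domination inequality in Definition~\ref{D} with the inclusion $\ran B^{*}\subset\ran A^{*}$, which is precisely the statement $\sD(A,B)=\sK$ in view of the defining formula \eqref{RAB}.

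For the implication (i)~$\Rightarrow$~(ii), I would start from $\|Bf\|\le c\|Af\|$ for all $f\in\sE$, square it to get the operator inequality $B^{*}B\le c^{2}A^{*}A$, and then invoke the Douglas lemma exactly as in the proof of Lemma~\ref{TAB}(iii)~$\Leftrightarrow$~(iv) to conclude that $\ran B^{*}\subset\ran A^{*}$, i.e., every $k\in\sK$ satisfies $B^{*}k\in\ran A^{*}$. Reading back through \eqref{RAB}, this is $\sD(A,B)=\sK$. The converse (ii)~$\Rightarrow$~(i) is the same chain run backwards: $\sD(A,B)=\sK$ means $\ran B^{*}\subset\ran A^{*}$, and the Douglas lemma then produces a bounded operator $C\in\bB(\sH,\sK)$ with $B=CA$, whence $\|Bf\|\le\|C\|\,\|Af\|$ and $B\prec A$ with $c=\|C\|$.

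Having closed the loop (i)~$\Leftrightarrow$~(ii), the remaining equivalence (ii)~$\Leftrightarrow$~(iii) is simply quoted from Lemma~\ref{TAB}. There is no real obstacle here; the only thing to be careful about is not to double up the Douglas-lemma argument, and to cite Lemma~\ref{TAB} rather than re-proving the bounded-operator characterization. In fact the cleanest write-up is a one-line proof: by the remark following Definition~\ref{D}, $B\prec A$ is equivalent to $\ran B^{*}\subset\ran A^{*}$, which in turn coincides with $\sD(A,B)=\sK$ by \eqref{RAB}, and the latter is equivalent to $L(A,B)\in\bB$ (as a densely, in fact everywhere, defined bounded operator) by Lemma~\ref{TAB}.
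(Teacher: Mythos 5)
Your proposal is correct and follows essentially the same route as the paper, which simply declares the lemma ``immediate from Lemma~\ref{TAB}'': the equivalence (ii)~$\Leftrightarrow$~(iii) is Lemma~\ref{TAB}(iii)--(iv), and (i)~$\Leftrightarrow$~(ii) is the Douglas-lemma identification of $B\prec A$ with $\ran B^{*}\subset\ran A^{*}$ already recorded in the remark after Definition~\ref{D}. The extra detail you supply is harmless but not needed beyond those two citations.
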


The notion of domination in Definition \ref{D} is now extended.

\begin{definition}\label{AD}
Let $\sE$, $\sH$, and $\sK$ be Hilbert spaces and let $A\in\bB(\sE,\sH)$
and $B\in\bB(\sE,\sK)$.  Then the operator $B$ is said to be
\textit{almost dominated} by $A$,
if there exists a sequence of bounded operators
$B_n \in \bB(\sE,\sK_n)$, where $\sK_n$ are Hilbert spaces,
and a sequence $c_n \geq 0$,
such that for all $f \in \sE$:
\begin{enumerate}\def\labelenumi{\rm(\alph{enumi})}
\item $\|B_{n}f\| \leq c_{n} \|Af\|$;
\item $\|B_{n}f\| \leq \|B_{n+1} f\|$;
\item $\|B_{n}f\| \nearrow \|Bf\|$.
\end{enumerate}
\end{definition}

It is clear  that if $B\in\bB(\sE,\sK)$ is dominated by $A\in\bB(\sE,\sH)$, then  $B$
is automatically almost dominated by $A$ by taking $B_n=B$ and $c_n=c$.

\vspace{0.2cm}

The analog of Lemma \ref{ABTT} for the almost dominated case
is contained in the following theorem.

\begin{theorem}\label{TAB++}
 Let $A\in\bB(\sE,\sH)$ and $B\in\bB(\sE,\sK)$ and
let the relation $L(A,B)$ be defined by \eqref{Tabegin}.
Then the following statements are equivalent:
\begin{enumerate}\def\labelenumi{\rm(\roman{enumi})}
\item $B$ is almost dominated by $A$;
\item $\sD(A,B)$
is dense in $\sK$;
\item $L(A,B)$ is regular.
\end{enumerate}
\end{theorem}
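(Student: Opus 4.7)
Since the equivalence (ii) $\Leftrightarrow$ (iii) is already part of Lemma \ref{TAB}, the proof reduces to establishing (i) $\Leftrightarrow$ (iii). Both directions will exploit the representation of the closure $L(A,B)^{**} = L(C_A,C_B) = C_B(C_A)^{-1}$ provided by Theorem \ref{Main}.

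For (iii) $\Rightarrow$ (i), regularity means that $L(A,B)^{**}$ is a closed operator $C : \dom C \subset \sH \to \sK$ with $CAf = Bf$ for every $f \in \sE$; in particular $\ran A \subset \dom C$. My plan is to truncate $C$ spectrally. Let $E_n$ denote the spectral projection of $|C| = (C^*C)^{1/2}$ corresponding to $[0,n]$ and set $C_n := CE_n$. Each $C_n$ is bounded on $\sH$ with $\|C_n\| \leq n$, so $B_n := C_n A \in \bB(\sE,\sK)$ satisfies $\|B_n f\| \leq n \|Af\|$. The functional calculus gives
\[
 \|B_n f\|^2 = \int_{[0,n]} \lambda^2 \, d\|E_\lambda Af\|^2,
\]
which is increasing in $n$ and converges to $\|\,|C|Af\,\|^2 = \|CAf\|^2 = \|Bf\|^2$ by monotone convergence, verifying (a)--(c) of Definition \ref{AD} with $\sK_n=\sK$ and $c_n=n$.

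For (i) $\Rightarrow$ (iii), condition (a) combined with the Douglas lemma yields $B_n = D_n A$ with $D_n \in \bB(\sH,\sK_n)$ and $\|D_n\| \leq c_n$. A preliminary observation is that $\ker A \subset \ker B$: for $f \in \ker A$ one has $B_n f = D_nAf = 0$, and (c) then forces $Bf = 0$. Consequently $L(A,B)$ is the graph of a well-defined linear operator $C_0: \ran A \to \sK$ via $C_0(Af) := Bf$, and the regularity of $L(A,B)$ is equivalent to the closability of $C_0$. The key idea is to introduce the nonnegative quadratic form
\[
 q'(h) := \sup_n \|D_n h\|^2, \qquad h \in \sH.
\]
Each $q_n(h) := \|D_n h\|^2$ is a bounded, hence closed, nonnegative form, and by the standard monotone convergence theorem for closed nonnegative forms (Kato--Simon) $q'$ is itself closed. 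Since $q'(Af) = \sup_n \|B_n f\|^2 = \|Bf\|^2 = \|C_0(Af)\|^2$, the form $q'$ is a closed extension of $h \mapsto \|C_0 h\|^2$. To read off closability of $C_0$, take any sequence $h_j = Af_j \to 0$ in $\sH$ with $C_0 h_j = Bf_j \to k$ in $\sK$; then $q'(h_j - h_m) = \|B(f_j - f_m)\|^2 \to 0$, so $\{h_j\}$ is Cauchy in the graph norm of $q'$, and closedness of $q'$ forces $q'(h_j) \to q'(0) = 0$. Hence $\|Bf_j\| \to 0$ and $k = 0$.

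The main obstacle is the direction (i) $\Rightarrow$ (iii). The hypothesis supplies only norm information about the $B_n$, and the codomains $\sK_n$ may differ from $\sK$, so no strong or weak convergence of $B_n$ toward $B$ is directly available. Routing through the Douglas factors $D_n \in \bB(\sH,\sK_n)$ and packaging them as a monotone family of closed forms on $\sH$ is the essential manoeuvre; the closedness of the limit form $q'$ is exactly what legitimizes the otherwise delicate interchange of limits $\lim_j \lim_n \|B_n f_j\|^2 = \lim_n \lim_j \|B_n f_j\|^2$ on which the argument hinges.
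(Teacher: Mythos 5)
Your proof is correct in substance but follows a genuinely different, more self-contained route than the paper. The paper establishes (i) $\Leftrightarrow$ (iii) by converting Definition \ref{AD} into a monotone sequence of bounded operators $T_n$ acting on $\cran A$ and then invoking the general closability criteria of \cite[Theorems 8.8 and 8.9]{HSS2018} for linear relations; you replace both citations by direct constructions. For (iii) $\Rightarrow$ (i) you truncate $|C|$ spectrally, where $C=L(A,B)^{**}$ --- note that $C$ should be viewed as a densely defined closed operator from the Hilbert space $\cran A$ to $\sK$ so that $C^{*}C$ is selfadjoint there; with that understood, the monotone convergence of $\int_{[0,n]}\lambda^{2}\,d\|E_{\lambda}Af\|^{2}$ to $\|Bf\|^{2}$ verifies (a)--(c) and essentially reproves the relevant half of \cite[Theorem 8.9]{HSS2018} in the operator-range setting. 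For (i) $\Rightarrow$ (iii) you route through the monotone convergence theorem for closed nonnegative forms \cite{S3}, which is in the spirit of the paper's sources but not the path taken in its proof. One point needs a word: to apply that theorem you must know $q_{n}\le q_{n+1}$ on all of $\sH$, whereas condition (b) only gives $\|D_{n}Af\|\le\|D_{n+1}Af\|$ on $\ran A$. This is repaired by taking the canonical Douglas solutions, i.e. those with $\ran (D_{n})^{*}\subset\cran A$: the inequality then extends to $\cran A$ by continuity and both sides vanish on $(\ran A)^{\perp}=\ker A^{*}$. With that choice made explicit your argument is complete; what it buys is independence from the machinery of \cite{HSS2018}, at the (mild) cost of importing the form-theoretic monotone convergence theorem.
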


\begin{proof}
(i) $\Rightarrow$ (iii)
Assume that $B$ is almost dominated by $A$.
Then there exists a sequence of operators
$B_n \in \bB(\sE,\sK_n)$
as in Definition \ref{AD}.
Note that if $f\in\ker A$ then $B_nf=0$ due to (a),
and hence $\|Bf\|=\sup \|B_nf\|=0$ due to (c).
One concludes that $\ker A\subset \ker B$,
so that $L(A,B)$ is an operator, see \eqref{Tabb}.
Define the sequence of linear relations $T_n$
from $\cran A$ to $\sK_{n}$ by
\[
 T_{n} =\clos \{\,\{Af,B_nf\}:\, f\in \sE\,\}.
\]
Due to (a) it follows
that each
$T_{n}$ is a closed bounded operator from $\cran A$ to $\sK_{n}$.
Furthermore, by (b) one sees that for $m \leq n$
\[
 \|T_{m} Af\|=\|B_{m}f\| \leq \|B_{n} f\|= \|T_{n} Af\|,
 \quad f \in \sE,
\]
which implies that
\begin{equation}\label{kolme}
 \|T_{m} h \| \leq \|T_{n} h \|, \quad h \in \cran A.
\end{equation}
Moreover, if $h\in\dom L(A,B)$ so that $h=Af$ for some $f\in\sE$,
then it follows from (c) that
\begin{equation}\label{op00}
 \|T_{n}h\|=\|T_{n}Af\|=\|B_nf\| \nearrow \|Bf\|=\|T_{A,B}h\|.
\end{equation}
Hence the sequence $T_{n} \in \bB(\cran A,\sK_{n})$
satisfies \eqref{kolme} and \eqref{op00}.
Thus \cite[Theorem 8.8]{HSS2018} implies that the operator
$L(A,B)$ is closable.

(ii) $\Rightarrow$ (i) Assume that $L(A,B)$ is regular,
so that $L(A,B)$ is a closable operator from $\sH$ to $\sK$.
Then by \cite[Theorem 8.9]{HSS2018}  there exists a sequence
$T_n \in \bB(\cran A,\sH)$ of bounded operators
with the property \eqref{kolme},  such that
\begin{equation}\label{vasa}
 \|T_n h\|\nearrow \|T_{A,B}h\|, \quad h\in\dom L(A,B).
\end{equation}
Define the operators $B_n=T_nA\in\bB(\sE,\sH)$.
It will be shown that the conditions of Definition \ref{AD}
are satisfied with $\sK_{n}=\sH$.
First note that
\[
\|B_n f\|\le \|T_n\|\,\|Af\|, \quad f\in \sE,
\]
so that (a) is satisfied. Secondly, observe that
for all $m\le n$ it follows from \eqref{kolme} that
\[
\|B_m f\|=\|T_m Af\|\le \|T_nAf\|=\|B_n f\|, \quad f \in \sE,
\]
so that (b) is satisfied. Finally note that \eqref{vasa} implies that
\[
 \|B_n f\|=\|T_nAf\| \nearrow \|L(A,B)Af\|=\|Bf\|, \quad f \in \sE,
\]
so that (c) is satisfied.
Thus, $B$ is almost dominated by $A$.

(ii) $\Leftrightarrow$ (iii) See Lemma \ref{TAB}.
\end{proof}

The following definition finds its inspiration in a similar notion
which is current in measure theory.

\begin{definition}\label{S}
Let $\sE$, $\sH$, and $\sK$ be Hilbert spaces and let $A\in\bB(\sE,\sH)$
and $B\in\bB(\sE,\sK)$.
Then the operator $B$ is said to be \textit{singular}
with respect to $A$
or, equivalently, the operator $A$ is said to be
\textit{singular} with respect to $B$,
if for every $ D\in\bB(\sE)$
\[
 D \prec A \quad \text{and} \quad D \prec B
\quad \Rightarrow\quad D=0.
\]
\end{definition}

Note that an equivalent statement is that
$\ran D^* \subset \ran A^*$ and $\ran D^* \subset \ran B^*$ imply that $D=0$.
It is straightforward to characterize the property
"$B$ is singular with respect to $A$"
in terms of the operators $A$ and $B$.

\begin{theorem}\label{ABlemma}
Let $A\in\bB(\sE,\sH)$ and $B\in\bB(\sE,\sK)$ and
let the relation $L(A,B)$ be defined by \eqref{Tabegin}.
Then the following statements are equivalent:
\begin{enumerate}\def\labelenumi{\rm(\roman{enumi})}
\item $B$ is singular with respect to $A$;
\item  $\ran A^* \cap \ran B^*=\{0\}$;
\item $L(A,B)$ is singular.
\end{enumerate}
\end{theorem}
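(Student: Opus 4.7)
The plan is to observe that the equivalence (ii) $\Leftrightarrow$ (iii) is already available from Lemma \ref{labsing} (condition (v) in that lemma), so the substantive content is the equivalence (i) $\Leftrightarrow$ (ii). For this I would translate the definition of $D \prec A$ via the Douglas lemma: for $D \in \bB(\sE)$, the condition $D \prec A$ is equivalent to $\ran D^* \subset \ran A^*$, and similarly for $B$. So the joint condition $D \prec A$ and $D \prec B$ is simply $\ran D^* \subset \ran A^* \cap \ran B^*$.

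From this reformulation both directions are short. For (ii) $\Rightarrow$ (i), if $\ran A^* \cap \ran B^* = \{0\}$ and $D \in \bB(\sE)$ satisfies $D \prec A$ and $D \prec B$, then $\ran D^* \subset \{0\}$, so $D^* = 0$ and $D = 0$. For the contrapositive of (i) $\Rightarrow$ (ii), suppose $\ran A^* \cap \ran B^* \neq \{0\}$ and pick a nonzero vector $\ell$ in the intersection. Fix any unit vector $e_0 \in \sE$ and define the rank-one operator $D \in \bB(\sE)$ by $D f = (f, \ell) e_0$; then $D^* f = (f, e_0) \ell$, so $\ran D^* = \spn\{\ell\} \subset \ran A^* \cap \ran B^*$. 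By the Douglas lemma $D \prec A$ and $D \prec B$, while $D \neq 0$, so $B$ is not singular with respect to $A$.

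There is essentially no obstacle here: once the Douglas lemma reduces domination to inclusion of ranges of adjoints, the equivalence (i) $\Leftrightarrow$ (ii) is almost tautological, and the equivalence with (iii) is handed to us by Lemma \ref{labsing}. The only small point of care is to produce an explicit nonzero $D \in \bB(\sE)$ (not merely $D \in \bB(\sE, \sX)$ for some auxiliary space), which is why I use a rank-one operator on $\sE$.
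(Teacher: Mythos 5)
Your proposal is correct and follows essentially the same route as the paper: reduce domination to range inclusion of adjoints via the Douglas lemma, note that (ii) $\Rightarrow$ (i) is then immediate, exhibit a nonzero rank-one operator with $\ran D^*=\spn\{\ell\}$ for the converse, and quote Lemma \ref{labsing} for (ii) $\Leftrightarrow$ (iii). The paper's only cosmetic difference is that it takes $D$ to be the orthogonal projection onto $\spn\{\ell\}$ rather than your rank-one operator $f\mapsto (f,\ell)e_0$; both serve the same purpose.
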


\begin{proof}
 (i) $\Rightarrow$ (ii)
Assume that $B$ is singular with respect to $A$.  To prove (ii),
 suppose that    $\ran A^*\cap\ran B^* \neq \{0\}$.
Then there exists a proper orthogonal projection $D$ in $\sE$
with $\ran D \subset \ran A^*\cap\ran B^*$, or
\[
 \ran D \subset \ran A^* \quad \mbox{and} \qquad  \ran D \subset \ran B^*.
\]
Since $D$ is an orthogonal projection, it is selfadjoint and one concludes
that $D \prec A$ and $D \prec B$. Hence $D=0$.
This contradiction implies that $\ran A^* \cap \ran B^*=\{0\}$.

(ii) $\Rightarrow$ (i) Assume that
 $\ran A^*\cap\ran B^*=\{0\}$.
 To prove (i), suppose that $D\in\bB(\sE)$ satisfies  $D \prec A$ and $D \prec B$
or, equivalently,
 $\ran D^{*} \subset \ran A^{*}$ and
$\ran D^{*} \subset \ran B^{*}$. This leads to
$\ran D^{*} \subset \ran A^{*}\cap\, \ran B^{*}$.
Hence $D^{*}=0$ and thus $D=0$.
Therefore,  $B$ is singular with respect to $A$.

(ii) $\Leftrightarrow$ (iii) See Lemma \ref{labsing}.
\end{proof}

\section{Almost domination and the Radon-Nikodym derivative}\label{sec8}

Let $\sE$, $\sH$, and $\sK$ be Hilbert spaces and let $A\in\bB(\sE,\sH)$
and $B\in\bB(\sE,\sK)$.
Let $L(A,B)$ be the corresponding operator range relation defined in \eqref{Tabegin}.
If $B$ is dominated or almost dominated by $A$,
then there is a factorization of $B$ with respect to $A$, which gives
the notion of the Radon-Nikodym derivative
in the abstract setting of the operator range relation $L(A,B)$.

\vspace{0.2cm}

Observe the following straightforward remarks.
Let $L(A,B)$ be an operator range relation as in \eqref{Tabegin} and
recall that $L(A,B)$ is equal to the  quotient $BA^{-1}$.
In the case that $L(A,B)$ is an operator, one may write
\begin{equation}\label{unno}
 Bf=L(A,B) Af \quad \mbox{for all} \quad f \in \sE.
\end{equation}
If also $L(A,B)^{**}$ is an operator, then  it follows from
\[
 L(A,B)=\{ \{Af,Bf \} : \, f \in \sE \} \subset L(A,B)^{**},
\]
that one may write
\begin{equation}\label{dduo}
 Bf=L(A,B)^{**} Af \quad \mbox{for all} \quad f \in \sE.
\end{equation}
 Note that in this identity only the reduced part of the pair $A$ and $B$ plays a role.
\medskip

First the case of domination will be considered.

\begin{lemma}\label{thmRNder0}
Let $A\in\bB(\sE,\sH)$ and $B\in\bB(\sE,\sK)$.
Then the following statements are equivalent:
\begin{enumerate}\def\labelenumi{\rm(\roman{enumi})}
\item $B$ is dominated by $A$;
\item $B=CA$ holds for some bounded linear operator $C$ from $\sH$ to $\sK$;
\item $B=CA$ holds for some closed bounded linear operator $C$ from $\sH$ to $\sK$.
\end{enumerate}
If one of these conditions is satisfied, then $L(A,B)$ is a bounded linear operator
and $B=L(A,B)A$. Moreover,
if $B=CA$ holds for some bounded linear operator $C$ from $\sH$ to $\sK$,
then $L(A,B) \subset C$; and if $B=CA$ holds for some closed bounded linear operator
$C$ from $\sH$ to $\sK$, then $L(A,B)^{**} \subset C$.
 \end{lemma}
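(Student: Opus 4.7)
The plan is to close the loop (iii) $\Rightarrow$ (i) $\Rightarrow$ (ii) $\Rightarrow$ (iii), and then read off the ``moreover'' assertions from the constructions used along the way.

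The implication (iii) $\Rightarrow$ (i) is immediate: if $B=CA$ with $C$ bounded of norm $M$, then $\|Bf\|=\|CAf\|\le M\|Af\|$ for every $f\in\sE$, which is exactly the domination $B\prec A$. For (i) $\Rightarrow$ (ii), which is essentially the Douglas lemma already invoked in the paper, the domination inequality $\|Bf\|\le c\|Af\|$ forces $\ker A\subset \ker B$, and the prescription $Af\mapsto Bf$ then defines unambiguously a bounded linear operator $C$ on $\ran A\subset\sH$ with $\|C\|\le c$. This $C$ is precisely the operator range relation $L(A,B)$ viewed as a single-valued map, which is a bounded operator by Lemma~\ref{ABTT}; in particular $B=L(A,B)A$ holds automatically. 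For (ii) $\Rightarrow$ (iii), pass to the closure: any bounded linear operator $C$ from $\sH$ to $\sK$ admits a closure $\overline C$ of the same norm, whose domain is the closure of $\dom C$. Since $\ran A\subset\dom C\subset\dom\overline C$, the identity $B=CA$ upgrades to $B=\overline C A$ with $\overline C$ closed, delivering (iii).

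It remains to verify the ``moreover'' claims. Whenever $B=CA$ for some bounded $C$, each pair $\{Af,Bf\}=\{Af,C(Af)\}$ lies in the graph of $C$, so the definition \eqref{Tabegin} of $L(A,B)$ yields at once $L(A,B)\subset C$. When $C$ is in addition closed, taking closures gives $L(A,B)^{**}=\overline{L(A,B)}\subset\overline C=C$. The main point---more a matter of bookkeeping than a genuine obstacle---is to recognize that the operator produced in (i) $\Rightarrow$ (ii) is the canonical one $L(A,B)$; this single identification simultaneously delivers the factorization $B=L(A,B)A$ and the minimality containments $L(A,B)\subset C$ and $L(A,B)^{**}\subset C$.
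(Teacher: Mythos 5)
Your proposal is correct and follows essentially the same route as the paper: the paper also identifies the dominating factor with $L(A,B)$ (via Lemma~\ref{ABTT}), passes to the closure $C^{**}$ for (ii)~$\Rightarrow$~(iii), and obtains the containments $L(A,B)\subset C$ and $L(A,B)^{**}\subset C$ by exactly the graph-inclusion and closure arguments you give. The only cosmetic difference is that you unpack the Douglas-lemma construction of $C$ on $\ran A$ explicitly, whereas the paper delegates this to Lemma~\ref{ABTT} and equation~\eqref{unno}.
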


\begin{proof}
(i) $\Rightarrow$ (ii) It follows from Lemma  \ref{ABTT} that $L(A,B)$
is a bounded linear operator. Hence it follows from \eqref{unno} that (ii) holds.

(ii) $\Rightarrow$ (iii) To see this replace $C$ in $B=CA$ by its closure $C^{**}$.

(iii $\Rightarrow$ (i) This is clear.

 These equivalent statements imply that
$L(A,B)$ is a bounded linear operator and
it follows from \eqref{Tabegin} that $B=L(A,B)A$.
Therefore, if $B=CA$ holds for some bounded linear operator
$C$ from $\sH$ to $\sK$, then $L(A,B) \subset C$ and, if in addition $C$ is closed,
it follows that $L(A,B)^{**} \subset C$.
\end{proof}

Notice that the Radon-Nikodym derivative in the following definition
satisfies the minimality property expressed in Lemma \ref{thmRNder0}.

\begin{definition}\label{rrnn}
Let $A\in\bB(\sE,\sH)$ and  $B\in\bB(\sE,\sK)$ and
assume that $B$ is  dominated by $A$.
Then the \textit{Radon-Nikodym derivative} $R(A,B)$
of $B$ with respect to $A$ is the bounded closed
operator $L(A,B)^{**}$ from $\sH$ to $\sK$.
\end{definition}

For an illustration of such a Radon-Nikodym derivative,
return  to the inequalities $A^*A \leq A^*A+B^*B$ and $B^*B \leq A^*A+B^*B$.
 These inequalities  imply that
there are $C_A \in \bB( \sE, \sH)$ and $C_B \in \bB( \sE, \sK)$,
such that  the identities in \eqref{eqCACB++}
hold, and they are unique when  \eqref{eqCACBa} is assumed.
It is clear from Definition \ref{D} that $A$ and $B$ are dominated by
the operator $(A^*A+B^*B)^\half$, so that each of the
following relations from $\sE$ to $\sH$ and from $\sE$ to $\sK$, respectively,
\[
 L((A^*A+B^*B)^{\half}, A) \quad \mbox{and} \quad L((A^*A+B^*B)^{\half}, B)
 \]
is not only regular, but (the graph of) a bounded operator.
Recall that $A_0$ and $B_0$ are the reduction of the pair $A$ and $B$;
see \eqref{reduu0} and  Lemma \ref{rreduxx}.

\begin{lemma}\label{nneeww}
Let $A\in\bB(\sE,\sH)$ and $B\in\bB(\sE,\sK)$,
and let $C_{A}$ and $C_{B}$ be the uniquely defined operators
satisfying \eqref{eqCACB++} and \eqref{eqCACBa}.
 Then the Radon-Nikodym derivatives of $A$ and $B$
 with respect to $(A^*A+B^*B)^{\half}$ are given by
\begin{equation}\label{RNCACB}
\begin{array}{c}
 R((A^*A+B^*B)^{\half}, A)= C_{A_0}, \quad
  R((A^*A+B^*B)^{\half}, B)= C_{B_0},
  \end{array}
\end{equation}
\end{lemma}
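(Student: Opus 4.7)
Write $S := (A^*A+B^*B)^{1/2}$ for brevity. My plan is to first verify that the Radon--Nikodym derivatives on the left-hand side of \eqref{RNCACB} are well defined, and then to compute $L(S,A)^{**}$ and $L(S,B)^{**}$ explicitly from the canonical factorizations $A=C_AS$ and $B=C_BS$ given in \eqref{eqCACB++}. The observation driving everything is that on the range of $S$ the operator pair $(S,A)$ is literally the pair $(\mathrm{id},C_A)$ up to substitution $g=Sf$.

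Step 1. From $\|Af\|^2\le\|Af\|^2+\|Bf\|^2=\|Sf\|^2$ we get $A\prec S$; by symmetry $B\prec S$. Hence Definition~\ref{rrnn} applies, so $R(S,A)=L(S,A)^{**}$ and $R(S,B)=L(S,B)^{**}$ are bounded closed operators from $\sE$ to $\sH$ and from $\sE$ to $\sK$, respectively.

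Step 2. Using $A=C_AS$ in the defining formula \eqref{Tabegin},
\[
L(S,A)=\{\,\{Sf,Af\}:f\in\sE\,\}=\{\,\{g,C_Ag\}:g\in\ran S\,\},
\]
after the substitution $g=Sf$. Since $C_A\in\bB(\sE,\sH)$ is bounded, the map $g\mapsto\{g,C_Ag\}$ is continuous from $\sE$ into $\sE\times\sH$, and $\ran S$ is dense in $\cran S$. Consequently the closure of $L(S,A)$ in $\sE\times\sH$ equals the graph of the restriction of $C_A$ to $\cran S$, with no multivalued part (any sequence $g_n\to 0$ in $\ran S$ forces $C_Ag_n\to 0$). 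Thus
\[
L(S,A)^{**}=\{\,\{g,C_Ag\}:g\in\cran S\,\}.
\]

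Step 3. Now I invoke the reduction. One has $\cran S=\cran(A^*A+B^*B)=\sE_0$, and by Lemma~\ref{rreduxx} the restriction of $C_A$ to $\sE_0$ is precisely $C_{A_0}$ (in the row notation $C_A=(C_{A_0},0)$). Hence $L(S,A)^{**}$ coincides with the graph of $C_{A_0}\in\bB(\sE_0,\sH)$, yielding $R(S,A)=C_{A_0}$. Repeating the argument with $B=C_BS$ in place of $A=C_AS$ gives $R(S,B)=C_{B_0}$. This establishes both identities in \eqref{RNCACB}.

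There is no real obstacle: the two mild points to verify are that the closure of $L(S,A)$ has no multivalued part (immediate from boundedness of $C_A$) and that the reduction $C_A\!\upharpoonright\sE_0=C_{A_0}$ matches the definition of $C_{A_0}$ through the reduced factorization $A_0=C_{A_0}((A_0)^*A_0+(B_0)^*B_0)^{1/2}$, which is exactly the content of Lemma~\ref{rreduxx}.
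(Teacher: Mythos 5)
Your proof is correct and follows essentially the same route as the paper: both arguments reduce to identifying $L(S,A)^{**}$ with the restriction of $C_A$ to $\cran(A^*A+B^*B)=\sE_0$ and then invoking Lemma~\ref{rreduxx} to recognize this restriction as $C_{A_0}$. The only cosmetic difference is that the paper obtains the inclusion $L(S,A)^{**}\subset C_A$ from the minimality statement in Lemma~\ref{thmRNder0} and then pins down the domain, whereas you compute the closure of the graph directly; the substance is identical.
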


\begin{proof}
 Since the operators $C_A$ and $C_B$ satisfy the identities \eqref{eqCACB++},
 if follows from Lemma \ref{thmRNder0} that
\[
 R((A^*A+B^*B)^{\half}, A)^{**} \subset C_A, \quad
 R((A^*A+B^*B)^{\half}, B)^{**} \subset C_B.
\]
Since all these operators are closed and bounded,
\[
 \dom R((A^*A+B^*B)^{\half}, A)^{**}=\dom R((A^*A+B^*B)^{\half}, B)^{**}=\cran (A^*A+B^*B)
\]
and the identities \eqref{RNCACB} follow.
\end{proof}

Next, the notion of almost domination in the general case will be taken up again.

\begin{theorem}\label{thmRNder}
Let $A\in\bB(\sE,\sH)$ and $B\in\bB(\sE,\sK)$.
Then the following statements are equivalent:
\begin{enumerate}\def\labelenumi{\rm(\roman{enumi})}
\item $B$ is almost dominated by $A$;
\item $B=CA$ holds for some closable linear operator $C$ from $\sH$ to $\sK$;
\item $B=CA$ holds for some closed linear operator $C$ from $\sH$ to $\sK$.
\end{enumerate}
If one of these conditions is satisfied, then $L(A,B)$ is a closable linear operator
such that $B=L(A,B)A$. Moreover,
if $B=CA$ holds for some closable linear operator $C$ from $\sH$ to $\sK$,
then $L(A,B) \subset C$ and if $B=CA$ holds for some closed linear operator
$C$ from $\sH$ to $\sK$, then $L(A,B)^{**} \subset C$.
\end{theorem}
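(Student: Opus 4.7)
The plan is to imitate the proof of Lemma~\ref{thmRNder0}, but with Theorem~\ref{TAB++} playing the role that Lemma~\ref{ABTT} played there. That is, almost domination will be translated to the regularity (closability) of $L(A,B)$, and the desired factorization $B=CA$ will be obtained by taking $C$ to be $L(A,B)$ or its closure $L(A,B)^{**}$.

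For the cyclic implications, I would proceed as follows. For (i)~$\Rightarrow$~(ii), assume $B$ is almost dominated by $A$. By Theorem~\ref{TAB++} the relation $L(A,B)$ is regular, i.e., a closable operator from $\sH$ to $\sK$ with $\dom L(A,B)=\ran A$. Since $\{Af,Bf\}\in L(A,B)$ for every $f\in\sE$ and $L(A,B)$ is single-valued, $Bf=L(A,B)\,Af$ for all $f\in\sE$ as in \eqref{unno}. So choose $C=L(A,B)$, a closable operator, giving (ii). For (ii)~$\Rightarrow$~(iii), if $B=CA$ with $C$ closable, replace $C$ by its closure $C^{**}$; since $C\subset C^{**}$, every pair $\{Af,Bf\}\in C$ lies in $C^{**}$, hence $B=C^{**}A$ with $C^{**}$ closed. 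For (iii)~$\Rightarrow$~(i), suppose $B=CA$ for some closed linear operator $C$ from $\sH$ to $\sK$. Then
\[
L(A,B)=\{\{Af,Bf\}:f\in\sE\}=\{\{Af,CAf\}:f\in\sE\}\subset C,
\]
so $L(A,B)$ is a restriction of the closed operator $C$, and consequently $L(A,B)$ is closable. By Theorem~\ref{TAB++}, $B$ is almost dominated by $A$, which is (i).

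For the minimality assertions at the end, observe that once any of (i)--(iii) holds, the argument above already gives $L(A,B)$ closable together with the identity $B=L(A,B)A$. If $B=CA$ holds for some closable $C$, then for every $f\in\sE$ one has $\{Af,Bf\}=\{Af,CAf\}\in C$, so by the very definition \eqref{Tabegin} of $L(A,B)$ one has the inclusion $L(A,B)\subset C$. If in addition $C$ is closed, taking closures yields $L(A,B)^{**}\subset C^{**}=C$.

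The only point requiring a bit of care is the fact that, in contrast with Lemma~\ref{thmRNder0}, the operator $L(A,B)$ is now in general unbounded, so one must make sure that the identity $Bf=L(A,B)Af$ is read at the level of linear relations rather than everywhere-defined operators; this is why I rely on the fact that $\dom L(A,B)=\ran A$ and that closability guarantees single-valuedness. No further obstacle is expected; the proof is a direct specialization of Theorem~\ref{TAB++} together with the elementary observation that a factorization $B=CA$ with $C$ closed forces $L(A,B)\subset C$.
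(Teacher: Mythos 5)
Your proposal is correct and follows the same overall strategy as the paper: both reduce the theorem to Theorem~\ref{TAB++} (almost domination $\Leftrightarrow$ regularity of $L(A,B)$) and obtain the factorization and minimality from the inclusion $L(A,B)\subset C$. The only real divergence is in (iii)~$\Rightarrow$~(i): the paper passes to adjoints, showing $B=CA$ implies $A^*C^*\subset B^*$, hence $\dom C^*\subset\sD(A,B)$, and uses that a closed operator has densely defined adjoint to conclude $\sD(A,B)$ is dense, invoking the equivalence (i)~$\Leftrightarrow$~(ii) of Theorem~\ref{TAB++}; you instead use the inclusion $L(A,B)\subset C$ directly, so that $L(A,B)^{**}\subset C$ is an operator and $L(A,B)$ is regular, invoking (i)~$\Leftrightarrow$~(iii) of that theorem. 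Your variant is slightly more economical, since the inclusion is needed anyway for the minimality statement; the paper's adjoint computation has the side benefit of exhibiting explicitly that $\dom C^*\subset\sD(A,B)$, which ties the factorization to the subspace $\sD(A,B)$ used throughout. Both arguments are complete and correct.
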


\begin{proof}
(i) $\Rightarrow$ (ii)
Assume that $B$ is almost dominated by $A$.
By Theorem \ref{TAB++} this implies that
$L(A,B)$ is a closable operator.
In particular, it follows from \eqref{dduo} that (ii) holds.

(ii) $\Rightarrow$ (iii)   As in the previous lemma this is
seen by replacing $C$ in $B=CA$ by its closure.

(iii) $\Rightarrow$ (i)  Assume that $B=CA$ with a closed operator $C$.
It follows from $B=CA$ that $A^{*}C^{*} \subset B^{*}$, in other words
\[
\{k,k'\} \in C^{*}  \quad \Rightarrow \quad  \{k, A^{*} k'\} \in B^{*}
\quad \Rightarrow \quad B^{*}k=A^{*}k'.
\]
Hence $\dom C^{*} \subset \sD(A,B)$. Since $C$ is a closed operator,
it follows that $\dom C^{*}$ is dense and thus that $\sD(A,B)$ is dense.
By Theorem \ref{TAB++} this means that $B$
is almost dominated by $A$.

It remains to prove the last statements. Notice that if $C$ satisfies (ii), then
 $\ran A\subset \dom C$ and
\[
L(A,B)=\{\{Ah,Bh\}:\, h \in \sE\} = \{\{Ah,CAh\}:\, h \in \sE\} \subset C.
\]
If, in addition, $C$ is closed, then one sees that  $L(A,B)^{**}\subset C$.
\end{proof}

Similar to what happens in the dominated case,
the Radon-Nikodym derivative in the following definition satisfies
the minimality property expressed in Theorem \ref{thmRNder}.

\begin{definition}\label{rrnn+}
Let $A\in\bB(\sE,\sH)$ and $B\in\bB(\sE,\sK)$ and
assume that $B$ is almost dominated by $A$.
Then the \textit{Radon-Nikodym derivative} $R(A,B)$
of $B$ with respect to $A$ is the  closed
operator $L(A,B)^{**}$ from $\sH$ to $\sK$.
\end{definition}

\begin{corollary}\label{thmRNder0a}
Let $A\in\bB(\sE,\sH)$ and $B\in\bB(\sE,\sK)$ and
assume that $B$ is almost dominated by $A$.
Then the \textit{Radon-Nikodym derivative} $R(A,B)$
of $B$ with respect to $A$ is  bounded if and only if
$B$ is dominated by $A$.
\end{corollary}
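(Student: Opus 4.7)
The plan is to prove both implications by exploiting the factorization identity $B = R(A,B)\,A$, which follows from Theorem \ref{thmRNder} together with the inclusion $L(A,B)\subset L(A,B)^{**}=R(A,B)$.

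For the forward direction, suppose $R(A,B)$ is bounded. Since $B$ is almost dominated by $A$, Theorem \ref{thmRNder} gives $B=L(A,B)A$, and hence $B=R(A,B)A$ via $L(A,B)\subset R(A,B)$. In particular $\ran A\subset \dom R(A,B)$, so for every $f\in\sE$ one obtains
\[
\|Bf\| \;=\; \|R(A,B)Af\| \;\le\; \|R(A,B)\|\,\|Af\|,
\]
which is exactly the statement that $B$ is dominated by $A$ (cf.\ Definition \ref{D}).

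For the backward direction, suppose $B$ is dominated by $A$. Then Lemma \ref{thmRNder0} (the equivalence (i)$\Leftrightarrow$(ii) and the concluding minimality statement) shows that $L(A,B)$ is actually a bounded linear operator on $\dom L(A,B)=\ran A$. Its closure $L(A,B)^{**}$ is then a bounded linear operator as well (a bounded operator extends by continuity to a bounded operator on the closure of its domain, with the same norm), and by Definition \ref{rrnn+} this closure is precisely $R(A,B)$.

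The only minor subtlety is to make sure that "bounded" is used consistently in both directions: in the forward direction one must invoke $L(A,B)\subset R(A,B)$ in order to evaluate $R(A,B)$ on $\ran A$, and in the backward direction one must note that taking the closure of a bounded operator does not destroy boundedness. Neither step is a real obstacle; both assertions have already been assembled in Lemma \ref{thmRNder0} and Theorem \ref{thmRNder}, so the corollary is essentially a direct packaging of the minimality property of $R(A,B)$.
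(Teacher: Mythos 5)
Your proof is correct and follows exactly the route the paper intends: the paper states this corollary without proof as an immediate consequence of Lemma \ref{thmRNder0}, Theorem \ref{thmRNder}, and the definition $R(A,B)=L(A,B)^{**}$, and your two directions (boundedness of $R(A,B)$ gives $\|Bf\|\le\|R(A,B)\|\,\|Af\|$ via $B=R(A,B)A$; domination gives boundedness of $L(A,B)$ by Lemma \ref{thmRNder0}, hence of its closure) are precisely the intended argument. No gaps.
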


Now recall that $L(A,B)^{**}=L(C_A,C_B)$ and, moreover, that
$L(C_A,C_B)$ is an operator precisely when $\ker C_A \subset \ker C_B$.
The Radon-Nikodym derivative $R(A,B)$
can be expressed
in terms of the Radon-Nikodym derivatives  in Lemma \ref{nneeww}.

\begin{theorem}\label{rrnn1}
Let $A\in\bB(\sE,\sH)$ and $B\in\bB(\sE,\sK)$ and
assume that $B$ is dominated or almost dominated by $A$.
Moreover, let $C_{A}$ and $C_{B}$ be as defined
in \eqref{eqCACB++} and \eqref{eqCACBa} and let
$C_{A_0}$ and $C_{B_0}$ be the Radon-Nikodym derivatives in \eqref{RNCACB}.
Then the Radon-Nikodym derivative $R(A,B)$
of $B$ with respect to $A$ is given by  the quotient
\begin{equation}\label{RNforAB}
   R(A,B)    = C_{B_0}(C_{A_0})^{-1},
 \end{equation}
where  $\ker C_{A_0} \subset \ker C_{B_0}$.
 \end{theorem}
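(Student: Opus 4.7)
The plan is to assemble pieces that are already in place. Since $B$ is (almost) dominated by $A$, Theorem~\ref{TAB++} ensures that $L(A,B)$ is regular, i.e., a closable operator, and hence by Definition~\ref{rrnn+} the Radon-Nikodym derivative is $R(A,B) = L(A,B)^{**}$. This is the starting identification; everything else will be a computation of the right-hand side in terms of $C_{A_0}$ and $C_{B_0}$.

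Next I would invoke Theorem~\ref{Main} to rewrite
\[
 R(A,B) = L(A,B)^{**} = L(C_A, C_B) = C_B (C_A)^{-1}.
\]
Because this closure is an operator (not a proper relation), Lemma~\ref{maincor} delivers the inclusion $\ker C_A \subset \ker C_B$. To translate this to the reduced pair, I would appeal to Lemma~\ref{rreduxx}: relative to the orthogonal decomposition $\sE = \sE_0 \oplus (\ker A \cap \ker B)$, the operators split as $C_A = r(C_{A_0},\, O_{\ker A \cap \ker B})$ and $C_B = r(C_{B_0},\, O_{\ker A \cap \ker B})$. Consequently
\[
 \ker C_A = \ker C_{A_0} \oplus (\ker A \cap \ker B),
\]
and similarly for $C_B$, so the inclusion $\ker C_A \subset \ker C_B$ is equivalent to $\ker C_{A_0} \subset \ker C_{B_0}$, which is the side condition claimed in the theorem.

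Finally, I would verify the equality $C_B(C_A)^{-1} = C_{B_0}(C_{A_0})^{-1}$ by an elementary bookkeeping argument. Given $\{h,k\} \in C_B(C_A)^{-1}$, pick $f \in \sE$ with $h = C_A f$ and $k = C_B f$, decompose $f = f_0 + f_1$ with $f_0 \in \sE_0$ and $f_1 \in \ker A \cap \ker B$, and recall from \eqref{conn000} that $\ker A \cap \ker B = \ker C_A \cap \ker C_B$, so $f_1$ contributes nothing: $h = C_{A_0} f_0$, $k = C_{B_0} f_0$. The reverse inclusion is immediate since any $f_0 \in \sE_0$ is a vector of $\sE$. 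Combining the two steps yields
\[
 R(A,B) = C_B(C_A)^{-1} = C_{B_0}(C_{A_0})^{-1},
\]
which is the identity \eqref{RNforAB}.

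The main obstacle is not serious: all of the heavy lifting is packaged in Theorem~\ref{Main} (the closure representation) and Lemma~\ref{rreduxx} (compatibility of the canonical factors $C_A, C_B$ with reduction). What remains is simply to verify that passage from $(C_A, C_B)$ to the reduced pair $(C_{A_0}, C_{B_0})$ is compatible with taking kernels and forming the quotient $C_B C_A^{-1}$, and this is a direct consequence of the orthogonal direct-sum decomposition of $\sE$.
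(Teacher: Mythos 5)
Your proposal is correct and follows essentially the same route as the paper's proof: identify $R(A,B)=L(A,B)^{**}=L(C_A,C_B)$ via Theorem~\ref{Main}, pass to the reduced pair via Lemma~\ref{rreduxx}, and read off the quotient $C_{B_0}(C_{A_0})^{-1}$ together with the kernel inclusion. The paper states these steps more tersely; your explicit bookkeeping of the kernel decomposition $\ker C_A=\ker C_{A_0}\oplus(\ker A\cap\ker B)$ and of the equality of the two quotients merely fills in details the paper leaves implicit.
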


\begin{proof}
 The Radon-Nikodym derivative $R(A,B)$ is given
 by the closed operator $L(C_A,C_B)$.
Now consider the reduction $A_0$ and $B_0$ of the pair $A$ and $B$.
By Lemma \ref{rreduxx} one has
\[
 R(A,B)=(L(A,B)^{**}=L(C_A, C_B)=L(C_{A_0}, C_{B_0}),
\]
with $\ker C_{A_0} \subset \ker C_{B_0}$.
The identity \eqref{RNforAB} by rewriting the above result
as a quotient of the operators $C_{A_0}$ and $C_{B_0}$.
 \end{proof}

 It will be helpful to compare the results in Lemma \ref{nneeww}
 and Theorem \ref{rrnn1}, together
with Theorem \ref{Main},  with the following construction known
from measure theory.
Let $(\mu,\nu)$ be a pair of finite positive measures.
Then $\mu$ and $\nu$ are absolutely continuous
with respect to the sum measure $\rho=\mu+\nu$: $\mu\ll\rho$, $\nu\ll\rho$.
This gives rise to the existence of the corresponding
Radon-Nikodym derivatives $f=\frac{d\mu}{d\rho}$
and $g=\frac{d\nu}{d\rho}$ and in this case
\begin{equation}\label{RNsum}
 f+g=1 \quad \rho\text{-a.e.}
\end{equation}
If, in addition, $\nu \ll \mu$ with the Radon-Nikodym derivative $h=\frac{d\nu}{d\mu}$,
then $\nu\ll\mu\ll\rho$ implies that
\[
 g=\frac{d\nu}{d\rho}=\frac{d\nu}{d\mu} \, \frac{d\mu}{d\rho} = hf \quad \rho\text{-a.e.}
\]
Since $\rho=\mu+\nu$ and $\nu \ll \mu$, one has also $\rho \ll \mu$.
Consequently, one has $f>0$ $\rho$-a.e. ($\Leftrightarrow$ $\mu$-a.e.) and
thus, in fact, the Radon-Nikodym derivative $h$ is given by
\begin{equation}\label{RNfrac}
 \frac{d\nu}{d\mu}=\frac{g}{f} \quad \mu\text{-a.e.}
\end{equation}

\begin{remark}
 The concept of Radon-Nikodym derivative given in Definitions~\ref{rrnn} and~\ref{rrnn+}
is applicable and uniquely determined for general operator range relations,
which are regular (i.e. closable operators).
Indeed, by Theorem \ref{Tclosed} the operator $T$ has the representation $T=L(A,B)$ and
if $T=\ran Z$ for some other operator $Z\in\bB(\sY,\sH\times\sK)$,
then Lemma \ref{opran+++} shows that there exists a bounded and boundedly invertible
operator $W\in\bB(\sE,\sY)$ such that $c(A,B)=ZW$.
Then
\[
 L(A,B)=\ran c(A,B)=\ran ZW=\ran Z
\]
and taking closures leads to $\cran Z=R(A,B)$.
\end{remark}

\section{Lebesgue type decompositions for pairs of bounded linear operators}\label{sec9}

Let $A\in\bB(\sE,\sH)$ and $B\in\bB(\sE,\sK)$ be bounded linear operators.
In this section it will be shown that there exist Lebesgue type decompositions
$B=B_1+B_2$ such that $B_1$ is almost dominated by $A$
and $B_2$ is singular with respect to $A$. The main idea is to go back to the
corresponding operator range relation $L(A,B)$ and to use the
Lebesgue type decompositions of $L(A,B)$; cf. \cite{HSS2018}.

\begin{definition}\label{bregsing}
Let $A\in\bB(\sE,\sH)$ and
$B\in\bB(\sE,\sK)$ be bounded linear operators
and let $P$ be the orthogonal projection onto $\sD(A,B)^{\perp}$.
The regular part $B_{\rm {reg}}$ and the singular part $B_{\rm {sing}}$
are defined by
\begin{equation}\label{aregsing}
B_{\rm {reg}}=(I-P)B, \quad B_{\rm {sing}}=PB.
\end{equation}
The corresponding  decomposition
\begin{equation}\label{aregsing1}
 B=B_{\rm {reg}}+B_{\rm {sing}}.
\end{equation}
is called the Lebesgue decomposition of $B$ with respect to $A$.
\end{definition}

Let $A\in\bB(\sE,\sH)$ and $B\in\bB(\sE,\sK)$ be
as in Definition \ref{bregsing}.
Let $L(A,B)$ from $\sH$ to $\sK$ be defined as in  \eqref{Tabegin}
and recall that
\[
\dom L(A,B)^*=\sD(A,B) \quad \mbox{and} \quad \mul L(A,B)^{**}=\sD(A,B)^{\perp};
\]
cf. \eqref{tabst1}.
Then
the relation $L(A,B)$ has the Lebesgue decomposition
\begin{equation}\label{Tabdec}
 L(A,B)=L(A,B)_{\rm{{reg}}}+L(A,B)_{\rm{{sing}}},
\end{equation}
where the regular and singular components are given by
\begin{equation}\label{Tabdecc}
  L(A,B)_{\rm{{reg}}}=(I-P)L(A,B), \quad L(A,B)_{\rm{{sing}}}=PL(A,B);
\end{equation}
here $P$ stands for the orthogonal projection from $\sK$
onto  $\mul L(A,B)^{**}=\sD(A,B)^\perp$;
cf. \cite{HSS2018}.
Via the decomposition \eqref{Tabdec}
one may now obtain the Lebesgue decomposition
of $B$ with respect to $A$ as in Definition \ref{bregsing}.

\begin{theorem}[Lebesgue decomposition]\label{LebABthm}
Let $A\in\bB(\sE,\sH)$ and
$B\in\bB(\sE,\sK)$ be bounded linear operators.
Then $B_{\rm {reg}}$ is almost dominated by $A$,
$B_{\rm {sing}}$ is singular with respect to $A$, and
$B$ has the Lebesgue decomposition \eqref{aregsing1}
with respect to $A$.
 The regular part $B_{\rm reg}$ can be written as
\begin{equation}\label{aregsing2}
 B_{\rm reg}=R(A,B_{\rm reg})A,
\end{equation}
where $R(A,B_{\rm reg})$ is the Radon-Nikodym derivative of
$B_{\rm {reg}}$ with respect to $A$:
\begin{equation}\label{aregsing3}
 R(A,B_{\rm reg})=L(A,B_{\rm reg})^{**}.
\end{equation}
In fact, if $L(A,B)$ is given by \eqref{Tabdec} and \eqref{Tabdecc}, then
\begin{equation}\label{Tab11}
 L(A,B)_{\rm {reg}}=L(A, B_{\rm reg}), \quad L(A,B)_{\rm {sing}}=L(A, B_{\rm sing}).
 \end{equation}
\end{theorem}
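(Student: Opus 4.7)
The plan is to derive everything from the Lebesgue decomposition of the operator range relation $L(A,B)$ together with the characterizations of almost domination and singularity already obtained in Sections~\ref{sec6} and~\ref{sec7}. The pivot is the identity \eqref{Tab11}, which then makes all the operator-level assertions immediate consequences of the corresponding relation-level ones.

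First I would verify \eqref{Tab11} by a direct computation on representing elements. By \eqref{Tabegin} one has $L(A,B)=\{\{Af,Bf\}:f\in\sE\}$, so multiplying on the range side by the bounded operator $I-P$ (respectively $P$) on $\sK$ gives
\[
 (I-P)L(A,B)=\{\{Af,(I-P)Bf\}:f\in\sE\}=L(A,B_{\rm reg}),
\]
and similarly $PL(A,B)=L(A,B_{\rm sing})$. Together with \eqref{Tabdecc} this proves \eqref{Tab11}. The decomposition $B=B_{\rm reg}+B_{\rm sing}$ is trivial from $I=(I-P)+P$.

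Next I would invoke the Lebesgue decomposition results for operator range relations (cf.\ \eqref{Tabdec}--\eqref{Tabdecc} and~\cite{HSS2018}) which state that $L(A,B)_{\rm reg}=(I-P)L(A,B)$ is regular and $L(A,B)_{\rm sing}=PL(A,B)$ is singular, where $P$ is the orthogonal projection onto $\mul L(A,B)^{**}=\sD(A,B)^\perp$. Via \eqref{Tab11} this reads as: $L(A,B_{\rm reg})$ is regular and $L(A,B_{\rm sing})$ is singular. Applying Theorem~\ref{TAB++} to the first yields that $B_{\rm reg}$ is almost dominated by $A$; applying Theorem~\ref{ABlemma} to the second yields that $B_{\rm sing}$ is singular with respect to $A$.

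Finally, for \eqref{aregsing2} and \eqref{aregsing3}: since $B_{\rm reg}$ is almost dominated by $A$, Theorem~\ref{thmRNder} gives a closed linear operator $C$ from $\sH$ to $\sK$ with $B_{\rm reg}=CA$, and the minimal such $C$ equals $L(A,B_{\rm reg})^{**}$; by Definition~\ref{rrnn+} this is precisely the Radon--Nikodym derivative $R(A,B_{\rm reg})$, so $B_{\rm reg}=R(A,B_{\rm reg})A$ with $R(A,B_{\rm reg})=L(A,B_{\rm reg})^{**}$. The main (and in fact only) subtlety is the bookkeeping in the first step, namely being careful that multiplying the relation by $I-P$ or $P$ on the range side commutes with the parametrization $f\mapsto\{Af,Bf\}$; once \eqref{Tab11} is in hand everything else is a one-line reduction to earlier results.
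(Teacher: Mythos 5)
Your proposal is correct and follows essentially the same route as the paper: establish \eqref{Tab11} by computing $(I-P)L(A,B)$ and $PL(A,B)$ on representing elements, then transfer regularity and singularity of the components of $L(A,B)$ to $B_{\rm reg}$ and $B_{\rm sing}$ via Theorem~\ref{TAB++} and Theorem~\ref{ABlemma}, and obtain \eqref{aregsing2}--\eqref{aregsing3} from Theorem~\ref{thmRNder}. No gaps; the bookkeeping point you flag about composing the relation with $I-P$ on the range side is exactly the computation the paper carries out.
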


\begin{proof}
The decomposition \eqref{aregsing1} follows from \eqref{aregsing}.
 It follows from \eqref{Tabdecc}
that $L(A,B)_{\rm{{reg}}}$ and  $L(A,B)_{\rm{{sing}}}$
have the representations
\[
\begin{split}
 &L(A,B)_{\rm {reg}}=\{\,\{Af,(I-P)Bf\}:\, f\in\sE\,\}=\{\,\{Af,B_{\rm {reg}}f\}:\, f\in\sE\,\}, \\
  &L(A,B)_{\rm {sing}}=\{\,\{Af,P B f\}:\, f\in\sE\,\}=\{\,\{Af, B_{\rm {sing}}f\}:\, f\in\sE\,\},
\end{split}
\]
which give \eqref{Tab11}.
Since the relation $L(A,B)_{\rm {reg}}$ is regular,
it follows from Theorem \ref{TAB++}
that $B_{\rm {reg}}$ is almost dominated by $A$.
Likewise,
since the relation $L(A,B)_{\rm {sing}}$ is singular,
it follows from Theorem \ref{ABlemma}
that $B_{\rm {sing}}$ is singular with respect to $A$.

The statements about the Radon-Nikodym derivative in  \eqref{aregsing2}
and \eqref{aregsing3} follow from Theorem \ref{thmRNder}.
\end{proof}

The Lebesgue decomposition in \eqref{aregsing1}
is an example of a so-called Lebesgue type decomposition.

\begin{definition}\label{optype}
Let $A\in\bB(\sE,\sH)$ and $B\in\bB(\sE,\sK)$ be bounded operators.
The operator $B$ is said to have a Lebesgue type
decomposition with respect to $A$,
if $B=B_{1}+B_{2}$ where $B_{1}, B_{2} \in\bB(\sE,\sK)$ have the properties:
\begin{enumerate}\def\labelenumi{\rm(\alph{enumi})}
\item $\ran B_{1} \perp \ran B_{2}$;
\item $B_{1}$ is almost dominated by $A$;
\item $B_{2}$ is singular with respect to $A$.
\end{enumerate}
\end{definition}

Let $A\in\bB(\sE,\sH)$ and $B\in\bB(\sE,\sK)$
as in Definition \ref{optype}.
Let the linear relation $L(A,B)$  from $\sH$ to $\sK$ be defined by \eqref{Tabegin}.
 According to \cite{HSS2018}  the Lebesgue type decompositions of $L(A,B)$
are in one-to-one correspondence
 with the closed linear subspaces $\sL \subset \sK$ such that
\begin{equation}\label{nic2}
\sL \subset \cdom L(A,B)^* \setminus \dom L(A,B)^*,
\end{equation}
which satisfy the condition
\begin{equation}\label{Lleb2+}
  \clos( \sL^\perp \cap \sD(A,B)) = \sL^\perp \cap \clos \sD(A,B).
\end{equation}
Define the closed linear subspace $\sM$ by
\begin{equation}\label{mmm}
\sM=\sD(A,B)^\perp \oplus \sL,
\end{equation}
and let $P_\sM$ be the orthogonal projection from $\sK$ onto $\sM$.
Then the corresponding  Lebesgue type decomposition of $L(A,B)$ is given by
\begin{equation}\label{nic5}
 L(A,B)=L(A,B)_1+L(A,B)_2,
\end{equation}
where the regular and singular components are given by
\begin{equation}\label{nic3}
 L(A,B)_1=(I-P_\sM) L(A,B), \quad L(A,B)_2= P_\sM L(A,B),
\end{equation}
respectively.
Via the decomposition \eqref{nic5} one may now obtain the
Lebesgue type decompositions of $B$ with respect to $A$ as in Definition \ref{optype}.

\begin{theorem}\label{llaabb}
Let $A\in\bB(\sE,\sH)$ and
$B\in\bB(\sE,\sK)$ be bounded linear operators.
Then the Lebesgue type decompositions of $B$
with respect to $A$ are in one-to-one correspondence
 with the closed linear subspaces $\sL\subset \sK$ in \eqref{nic2}
 which satisfy the condition \eqref{Lleb2+}.
 In particular, the corresponding Lebesgue type decomposition is given by
\[
 B=B_{1}+B_{2}, \quad B_{1}=(I-P_\sM)B, \quad
 B_{2}=P_\sM B,
\]
where $\sM$ is as in \eqref{mmm} and $P_\sM$
is the orthogonal projection from $\sK$ onto $\sM$, while
$B_{1}$ is almost dominated by $A$ and
$B_{2}$ is singular with respect to $A$.
 The regular part $B_{1}$ can be written as
\[
 B_{1}=R(A,B_1)A,
\]
where $R(A, B_1)$ is the Radon-Nikodym derivative of
$B_{1}$ with respect to $A$:
\[
 R(A,B_1)=L(A,B_1)^{**}.
\]
In fact, $L(A,B)$ is given by \eqref{nic5} and \eqref{nic3} precisely, when
\[
 L(A,B)_{\rm {1}}=L(A, B_{1}), \quad L(A,B)_{2}=L(A, B_{2}).
\]
\end{theorem}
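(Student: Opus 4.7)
The plan is to reduce the statement to the parametrization of Lebesgue type decompositions of the operator range relation $L(A,B)$ recalled from \cite{HSS2018} just before the theorem, and then to transfer that parametrization to the operator level via the natural correspondence $B_i \leftrightarrow L(A,B_i)$. My key observation will be that the orthogonal projection $P_\sM$ in \eqref{nic3} acts only on the $\sK$-component, so each summand in the decomposition of $L(A,B)$ is itself an operator range relation of the form $L(A,\cdot)$ with the same left factor $A$; this is what will allow any decomposition of $L(A,B)$ to descend to a genuine decomposition of $B$.

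For the forward direction I would begin with a closed subspace $\sL\subset\sK$ satisfying \eqref{nic2} and \eqref{Lleb2+}, form $\sM$ by \eqref{mmm}, and let $P_\sM$ be the orthogonal projection onto $\sM$. A direct rewriting of \eqref{nic3} yields
\[
 L(A,B)_1 = \{\{Af,(I-P_\sM)Bf\}:f\in\sE\} = L(A,B_1), \quad B_1 = (I-P_\sM)B,
\]
and similarly $L(A,B)_2 = L(A,B_2)$ with $B_2 = P_\sM B$. From $\ran B_1 \subset \sM^\perp$ and $\ran B_2 \subset \sM$ I would obtain condition (a) of Definition~\ref{optype}; the regularity of $L(A,B_1)$ together with Theorem~\ref{TAB++} yields (b), and the singularity of $L(A,B_2)$ together with Theorem~\ref{ABlemma} yields (c). The Radon-Nikodym representation of $B_1$ is then immediate from Theorem~\ref{thmRNder}.

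For the converse, I would start from an arbitrary Lebesgue type decomposition $B=B_1+B_2$ in the sense of Definition~\ref{optype}. By Theorems~\ref{TAB++} and \ref{ABlemma}, $L(A,B_1)$ is regular and $L(A,B_2)$ is singular, and since $\ran L(A,B_i)=\ran B_i$, these two relations have mutually orthogonal ranges. Hence
\[
 L(A,B)=L(A,B_1)+L(A,B_2)
\]
is a Lebesgue type decomposition of the operator range relation $L(A,B)$, and the parametrization from \cite{HSS2018} assigns to it a unique $\sL\subset\sK$ satisfying \eqref{nic2} and \eqref{Lleb2+} together with the identities $L(A,B_1)=(I-P_\sM)L(A,B)$ and $L(A,B_2)=P_\sM L(A,B)$. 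The main obstacle will be promoting these relational identities to the operator-level identities $B_1=(I-P_\sM)B$ and $B_2=P_\sM B$: the key point here is that almost domination forces $\ker A\subset\ker B_1$, which is immediate from condition (a) of Definition~\ref{AD}, so $L(A,B_1)$ is single-valued on $\ran A$, and the pair $(Af,(I-P_\sM)Bf)$, which lies in $L(A,B_1)$ by the relational identity, must therefore coincide with $(Af, B_1 f)$, yielding $B_1 f = (I-P_\sM)Bf$ for every $f \in \sE$ and, by subtraction, $B_2 = P_\sM B$.
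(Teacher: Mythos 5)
Your proposal is correct and follows essentially the same route as the paper's proof: both directions reduce to the parametrization of Lebesgue type decompositions of $L(A,B)$ from \cite[Theorem~5.4]{HSS2018}, transfer regularity and singularity via Theorems~\ref{TAB++} and~\ref{ABlemma}, and in the converse direction promote the relational identity $L(A,B_1)=L(A,(I-P_\sM)B)$ to the operator identity $B_1=(I-P_\sM)B$ by using that $B_1(\ker A)=\{0\}$, i.e.\ the single-valuedness of the regular part --- which is exactly the paper's argument with $f=h+\varphi$, $\varphi\in\ker A$. The only cosmetic difference is that the paper invokes $\mul L(A,B_1)=B_1(\ker A)=\{0\}$ from regularity, while you derive $\ker A\subset\ker B_1$ directly from Definition~\ref{AD} (for which you need condition (c) as well as (a)); these are equivalent.
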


\begin{proof}
 Consider the linear relation $L(A,B)$ from $\sH$ to $\sK$ defined by \eqref{Tabegin}.

First it is shown that every Lebesgue type decomposition
of the linear relation $L(A,B)$ (in the sense of \cite{HSS2018})
generates a Lebesgue type decomposition of $B$
with respect to $A$ as in Definition \ref{optype}.
To see this let  $\sL \subset \cdom L(A,B)^* \setminus \dom L(A,B)$
be a linear subspace
which satisfies  \eqref{Lleb2+} and let $\sM$ be as defined
in \eqref{mmm} with the corresponding
orthogonal projection $P_\sM$ onto $\sM$.
According to \cite[Theorem~5.4]{HSS2018} the formula
\[
 L(A,B)=(I-P_\sM) L(A,B) + P_\sM L(A,B)
\]
determines a Lebesgue type decomposition of $L(A,B)$,
where $(I-P_\sM) L(A,B)$ is the regular part and
$P_\sM L(A,B)$ is the singular part generated uniquely by the subspace $\sL$.
From the representation of the regular part
\[
 (I-P_\sM) L(A,B)=\{\,\{Af, (I-P_\sM) Bf\}:\, f\in\sE\,\}
\]
and Theorem \ref{TAB++} it follows that $B_1=(I-P_\sM) B$
is almost dominated by $A$.
Likewise from the representation of the singular part
\[
 P_\sM L(A,B)=\{\,\{Af,  P_\sM Bf\}:\, f\in\sE\,\}
\]
and Theorem \ref{ABlemma} it follows that  $B_2=P_\sM B$
is singular with respect to $A$.
 Hence, the identity $B=B_1+B_2$ is a Lebesgue type decomposition of $B$ with respect to $A$
in the sense of Definition \ref{optype}.

Conversely, assume that  $A\in\bB(\sE,\sH)$, $B\in\bB(\sE,\sK)$,
and that $B$ has a Lebesgue type decomposition as in Definition \ref{optype}.
Then it is clear that the corresponding relations satisfy
 \begin{equation}\label{vaasnew}
L(A,B)=L(A,B_{1})+L(A,B_{2}),
\end{equation}
where, due to Theorem \ref{TAB++} and Theorem \ref{ABlemma},
the relation $L(A,B_{1})$ is regular and the relation $L(A,B_{2})$ is singular.
Hence \eqref{vaasnew} is a Lebesgue type decomposition for $L(A,B)$.
Again by \cite[Theorem~5.4]{HSS2018} there exists a linear subspace $\sL$, such that
\eqref{nic2} and \eqref{Lleb2+} are satisfied, and a subspace $\sM$ given by \eqref{mmm}
such that
 \begin{equation}\label{grijps1}
 \begin{split}
 L(A,B_{1})&=(I-P_{\sM}) L(A,B)=L(A, (I-P_{\sM})B),  \\
  L(A,B_{2})&=P_{\sM} L(A,B)=L(A, P_{\sM}B).
\end{split}
\end{equation}
Thanks to the first identities in \eqref{grijps1}, for every $h \in \sE$
there exists $f \in \sE$, such that
\[
 Ah=Af, \quad (I-P_{\sM})Bh=B_{1}f.
\]
Thus $f=h+\varphi$ for some $\varphi \in \ker A$. Since $L(A,B_{1})$ is regular,
it is an operator and hence $\mul L(A,B_1)=B_1(\ker A)=0$; cf.  \eqref{Tabb}.
This shows that $B_{1}\varphi=0$, and thus $(I-P_{\sM})Bh=B_{1}h$.
Therefore, $(I-P_{\sM})B=B_{1}$ and, consequently, $P_{\sM}B=B_{2}$.
 This proves the one-to-one correspondence between the Lebesgue type decompositions
of $B=B_1+B_2$ in Definition \eqref{optype} and of $L(A,B)$ in \eqref{nic5} and \eqref{nic3}.
The one-to-one correspondence between the closed subspaces $\sL$ satisfying the conditions
\eqref{nic2} and \eqref{Lleb2+} is obtained from \cite[Theorem~5.4]{HSS2018}.

The statement about the Radon-Nikodym derivative of $B_{1}$ with respect to $A$
follows from Theorem \ref{TAB++}.
\end{proof}

For any Lebesgue type decomposition of $B$ with respect to $A$, the part $B_1$,
which is almost dominated by $A$, is, in fact, dominated by the regular part $B_{\rm reg}$.

\begin{corollary}\label{optim}
Let $A\in\bB(\sE,\sH)$ and $B\in\bB(\sE,\sK)$.
Let $B=B_1+B_2$ be a Lebesgue type decomposition of $B$, then
\[
 \|B_1 h\| \leq \|B_{\rm reg} h\|, \quad h \in \sE.
\]
\end{corollary}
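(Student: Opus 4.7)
The plan is to exploit the parametrization of Lebesgue type decompositions given in Theorem \ref{llaabb} and compare the projections associated with a general decomposition with the projection associated with the canonical Lebesgue decomposition in Definition \ref{bregsing}.

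First, I would recall that by Theorem \ref{llaabb} the regular part $B_1$ of any Lebesgue type decomposition has the form $B_1=(I-P_\sM)B$, where $\sM=\sD(A,B)^\perp\oplus\sL$ for some closed subspace $\sL\subset\cdom L(A,B)^*\setminus\dom L(A,B)^*$ satisfying \eqref{Lleb2+}. The canonical Lebesgue decomposition from Definition \ref{bregsing} corresponds to the choice $\sL=\{0\}$, giving the projection $P$ onto $\sD(A,B)^\perp$ and $B_{\rm reg}=(I-P)B$.

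The key observation is the comparison $\sD(A,B)^\perp\subset \sM$. Taking orthogonal complements yields $\sM^\perp\subset(\sD(A,B)^\perp)^\perp=\overline{\sD(A,B)}$, so $I-P_\sM=P_{\sM^\perp}$ is the projection onto a subspace of the range of $I-P=P_{\,\overline{\sD(A,B)}}$. Consequently, for every $k\in\sK$ one has the pointwise inequality
\[
 \|(I-P_\sM)k\|=\|P_{\sM^\perp}k\|\leq \|P_{\,\overline{\sD(A,B)}}k\|=\|(I-P)k\|.
\]

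Applying this to $k=Bh$ for $h\in\sE$ gives $\|B_1 h\|=\|(I-P_\sM)Bh\|\leq \|(I-P)Bh\|=\|B_{\rm reg}h\|$, which is the desired inequality. No step here is genuinely hard; the only point that needs a bit of care is verifying the inclusion $\sL\subset\overline{\sD(A,B)}$, which is immediate from \eqref{nic2} together with $\cdom L(A,B)^*=\overline{\sD(A,B)}$ (see \eqref{tabst}), guaranteeing that the sum defining $\sM$ in \eqref{mmm} is indeed orthogonal and contains $\sD(A,B)^\perp$.
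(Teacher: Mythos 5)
Your proof is correct and follows essentially the same route as the paper: both start from the parametrization $B_1=(I-P_{\sM})B$ with $\sM=\sD(A,B)^\perp\oplus\sL$ from Theorem \ref{llaabb}, use the inclusion $\ran P=\sD(A,B)^\perp\subset\sM$, and conclude by contractivity; the paper phrases this as $B_1=(I-P_{\sM})(I-P)B=(I-P_{\sM})B_{\rm reg}$, which is the same computation as your pointwise comparison of the nested projections $P_{\sM^\perp}$ and $P_{\,\overline{\sD(A,B)}}$.
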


\begin{proof}
 Let $B=B_1+B_2$ be a Lebesgue type decomposition of $B$
with respect to $A$. Then as in the proof of Theorem \ref{llaabb}
one finds that
\[
 B_1=(I-P_{\sM})B=(I-P_\sM)(I-P)B=(I-P_\sM) B_{\rm reg},
\]
where it was used that   $\ran P \subset \ran P_{\sM}$; cf. \eqref{mmm}.
 \end{proof}

The uniqueness of Lebesgue type decompositions of $B$
with respect to $A$ in Definition \ref{optype}
can be characterized as follows.

\begin{corollary}\label{Andoun-}
Let $A\in\bB(\sE,\sH)$
and $B\in\bB(\sE,\sK)$.
Then the following statements are equivalent:
\begin{enumerate}\def\labelenumi{\rm(\roman{enumi})}
\item $B$ admits a unique Lebesgue type decomposition with respect to $A$;
\item $L(A,B)$ admits a unique Lebesgue type decomposition;
\item $\sD(A,B)$ is closed;
\item $B_{\rm reg}$ is dominated by $A$;
 \item the Radon-Nikodym derivative $R(A,B_{\rm reg})$  is a bounded operator.
\end{enumerate}
In this case, all Lebesgue type decompositions of $B$ with respect to $A$
coincide with the Lebesgue decomposition of
$B=B_{\rm reg}+B_{\rm sing}$ in Theorem \ref{LebABthm}.
\end{corollary}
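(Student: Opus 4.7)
The plan is to pivot everything through the parametrization of Lebesgue type decompositions in Theorem~\ref{llaabb}, reducing the whole statement to a single closedness condition on the subspace $\sD(A,B)$. The overall structure is: (i)$\Leftrightarrow$(ii) comes directly from Theorem~\ref{llaabb}; (iv)$\Leftrightarrow$(v) is Corollary~\ref{thmRNder0a} applied to $B_{\rm reg}$; so the substance lies in proving (ii)$\Leftrightarrow$(iii) and (iii)$\Leftrightarrow$(iv). The final clause of the statement then follows from uniqueness, since the Lebesgue decomposition produced in Theorem~\ref{LebABthm} is itself a Lebesgue type decomposition in the sense of Definition~\ref{optype}.

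For (iii)$\Leftrightarrow$(iv) I would argue by a short direct computation. Writing $B_{\rm reg}=(I-P)B$ with $P$ the orthogonal projection onto $\sD(A,B)^{\perp}$, one has $B_{\rm reg}^{*}=B^{*}(I-P)$, so
\[
\sD(A,B_{\rm reg})=\{\,k\in\sK:\,B^{*}(I-P)k\in\ran A^{*}\,\}=\{\,k\in\sK:\,(I-P)k\in\sD(A,B)\,\}.
\]
Since $\ran(I-P)=\sD(A,B)^{\perp\perp}=\clos\sD(A,B)$, the identity $\sD(A,B_{\rm reg})=\sK$ (which by Lemma~\ref{ABTT} is precisely dominance of $B_{\rm reg}$ by $A$) is equivalent to $\clos\sD(A,B)\subset\sD(A,B)$, i.e., $\sD(A,B)$ being closed.

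For (ii)$\Leftrightarrow$(iii) I would return to the parametrization by closed subspaces $\sL$ in Theorem~\ref{llaabb}. Since $\cdom L(A,B)^{*}\setminus\dom L(A,B)^{*}=\clos\sD(A,B)\setminus\sD(A,B)$, if $\sD(A,B)$ is closed then only $\sL=\{0\}$ satisfies \eqref{nic2}, and \eqref{Lleb2+} is then trivial, giving uniqueness. Conversely, if $\sD(A,B)$ is not closed, a nontrivial admissible $\sL$ exists by the corresponding characterization of uniqueness of Lebesgue type decompositions for general linear relations proved in \cite{HSS2018}; this $\sL$ produces a decomposition of $L(A,B)$ distinct from the canonical one, and hence, via Theorem~\ref{llaabb}, a distinct Lebesgue type decomposition of $B$.

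I expect the delicate step to be producing the nontrivial admissible $\sL$ in the converse direction of (ii)$\Leftrightarrow$(iii): the compatibility condition \eqref{Lleb2+} is not automatic for an arbitrary one-dimensional choice $\spn\{k\}$ with $k\in\clos\sD(A,B)\setminus\sD(A,B)$. The cleanest route, rather than reproving this in the operator range setting, is to invoke the general uniqueness criterion for linear relations from \cite{HSS2018}, which has been tailored exactly for this situation and which the present theory reduces to via the identifications $\dom L(A,B)^{*}=\sD(A,B)$ and $\mul L(A,B)^{**}=\sD(A,B)^{\perp}$ recorded in \eqref{tabst} and \eqref{tabst1}.
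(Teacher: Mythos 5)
Your proposal is correct, and most of it coincides with the paper's own argument: (i)$\Leftrightarrow$(ii) via Theorem~\ref{llaabb}, (iv)$\Leftrightarrow$(v) via Corollary~\ref{thmRNder0a}, and the appeal to the uniqueness criterion of \cite{HSS2018} (their Theorem~6.1) for the equivalence with closedness of $\dom L(A,B)^*=\sD(A,B)$. Where you genuinely diverge is in the link between (iii) and (iv). The paper never proves (iii)$\Leftrightarrow$(iv) directly; instead it invokes \cite[Theorem~6.1]{HSS2018} a second time, in the form ``uniqueness holds iff the regular part $L(A,B)_{\rm reg}=L(A,B_{\rm reg})$ is a bounded operator,'' and then translates boundedness of $L(A,B_{\rm reg})$ into domination of $B_{\rm reg}$ by $A$ via Lemma~\ref{ABTT}. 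Your alternative is the elementary computation $\sD(A,B_{\rm reg})=\{k:(I-P)k\in\sD(A,B)\}$, so that $\sD(A,B_{\rm reg})=\sK$ iff $\ran(I-P)=\clos\sD(A,B)\subset\sD(A,B)$, i.e.\ iff $\sD(A,B)$ is closed; this is correct and entirely self-contained, trading the second citation of the external theorem for a two-line operator identity. The cost is that you still need the external criterion once, for (ii)$\Leftrightarrow$(iii) -- and you correctly identify that the converse direction there (producing a nontrivial admissible $\sL$ satisfying \eqref{Lleb2+} when $\sD(A,B)$ is not closed) is not something to improvise; the paper likewise delegates exactly this point to \cite{HSS2018}. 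The benefit of your route is a cleaner logical chain (iii)$\Leftrightarrow$(iv)$\Leftrightarrow$(v) that does not pass through the relation-theoretic machinery at all; the benefit of the paper's route is that all the nontrivial equivalences are pinned to a single quoted theorem, at the price of using it twice.
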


\begin{proof}
(i) $\Leftrightarrow$ (ii)
The Lebesgue type decompositions of $B=B_{1}+B_{2}$ with respect to $A$
correspond to the Lebesgue type decompositions
of $L(A,B)=T_{1}+T_{2}$ via Theorem \ref{llaabb}.
Hence $B$ has a unique Lebesgue type decomposition with respect to $A$
if and only $L(A,B)$ has a unique Lebesgue type decomposition.

(ii) $\Leftrightarrow$ (iii)
By \cite[Theorem~6.1]{HSS2018} $L(A,B)$ has a unique Lebesgue type decomposition
if and only if $\dom L(A,B)^*$ is closed, i.e., $\sD(A,B)$ is closed; cf. \eqref{tabst}.

(ii) $\Leftrightarrow$ (iv) Again by \cite[Theorem~6.1]{HSS2018}
$L(A,B)$ has a unique Lebesgue type decomposition if and only if
$L(A,B)_{\rm {reg}}=L(A, B_{\rm reg})$ is bounded; cf. Theorem \ref{LebABthm}.
Now $L(A, B_{\rm reg})$ is bounded if and only if
$B_{\rm reg}$ is dominated by $A$; cf Lemma \ref{ABTT}.

(iv) $\Leftrightarrow$ (v) This follows from Corollary \ref{thmRNder0a}.

The last statement is clear from \eqref{nic2}, since if $\sD(A,B)=\dom L(A,B)^*$ is closed,
then $\sL=\{0\}$ and $\sM$ in \eqref{mmm} coincides with $\sD(A,B)^\perp$.
 \end{proof}

\end{document}